\DeclareMathOperator{\Hom}{Hom}
\DeclareMathOperator{\End}{End}
\DeclareMathOperator{\Aut}{Aut}
\DeclareMathOperator{\Ad}{Ad}
\DeclareMathOperator{\ad}{ad}
\newtheorem{Thm}{Theorem}[section]
\newtheorem{Pro}[Thm]{Proposition}
\newtheorem{Lem}[Thm]{Lemma}
\newtheorem{Cor}[Thm]{Corollary}
\newtheorem{Def-Pro}[Thm]{Definition-Proposition}
\theoremstyle{definition}
\newtheorem{Ex}[Thm]{Example}
\newtheorem{Rm}[Thm]{Remark}
\begin{document}
\title{Coadjoint orbits of Lie groupoids}
\author{Honglei Lang \, and  \, Zhangju Liu \\\vspace{1mm}
\it{Max Planck Institute for Mathematics, Bonn, 53111, Germany}\\ 
\it{Department of Mathematics, Peking University, Beijing, 100871, China}\\
 hllang@mpim-bonn.mpg.de,~~liuzj@pku.edu.cn
}


\date{}
\maketitle

\makeatletter
\newif\if@borderstar
\def\bordermatrix{\@ifnextchar*{%
\@borderstartrue\@bordermatrix@i}{\@borderstarfalse\@bordermatrix@i*}%
}
\def\@bordermatrix@i*{\@ifnextchar[{\@bordermatrix@ii}{\@bordermatrix@ii[()]}}
\def\@bordermatrix@ii[#1]#2{%
\begingroup
\m@th\@tempdima8.75\p @\setbox\z@\vbox{%
\def\cr{\crcr\noalign{\kern 2\p@\global\let\cr\endline }}%
\ialign {$##$\hfil\kern 2\p@\kern\@tempdima & \thinspace %
\hfil $##$\hfil && \quad\hfil $##$\hfil\crcr\omit\strut %
\hfil\crcr\noalign{\kern -\baselineskip}#2\crcr\omit %
\strut\cr}}%
\setbox\tw@\vbox{\unvcopy\z@\global\setbox\@ne\lastbox}%
\setbox\tw@\hbox{\unhbox\@ne\unskip\global\setbox\@ne\lastbox}%
\setbox\tw@\hbox{%
$\kern\wd\@ne\kern -\@tempdima\left\@firstoftwo#1%
\if@borderstar\kern2pt\else\kern -\wd\@ne\fi%
\global\setbox\@ne\vbox{\box\@ne\if@borderstar\else\kern 2\p@\fi}%
\vcenter{\if@borderstar\else\kern -\ht\@ne\fi%
\unvbox\z@\kern-\if@borderstar2\fi\baselineskip}%
\if@borderstar\kern-2\@tempdima\kern2\p@\else\,\fi\right\@secondoftwo#1 $%
}\null \;\vbox{\kern\ht\@ne\box\tw@}%
\endgroup
}
\makeatother
\begin{abstract}
For a  Lie groupoid $\mathcal{G}$ with Lie algebroid $A$, we realize the symplectic leaves  of the Lie-Poisson structure on $A^*$ as orbits of the affine coadjoint action of the Lie groupoid $\mathcal{J}\mathcal{G}\ltimes T^*M$ on $A^*$, which coincide with the groupoid orbits of the symplectic groupoid $T^*\mathcal{G}$ over $A^*$.  It is also shown that there is a fiber bundle structure on each symplectic leaf. In the case of gauge groupoids, a symplectic leaf is the  universal phase space 
for a classical particle in a Yang-Mills field. \end{abstract}



\section{Introduction}
Lie algebras provide basic examples of Poisson manifolds. Namely, the dual space $\mathfrak{g}^*$ of a finite-dimensional Lie algebra $\mathfrak{g}$ admits a linear Poisson structure, called its Lie-Poisson structure. 
As elements in $\mathfrak{g}$ can be understood as linear functions on $\mathfrak{g}^*$, the Poisson bracket is given by the Lie bracket. It is known that a Poisson manifold naturally decomposes into symplectic leaves, which is referred as the symplectic foliation of the Poisson manifold. 
In particular,  symplectic leaves of the Lie-Poisson structure on $\mathfrak{g}^*$ are coadjoint orbits of $G$, the connected Lie group of $\mathfrak{g}$. 
See \cite{Camile} for  examples.  

A Lie algebroid $A$ also gives a Lie-Poisson structure on its dual bundle $A^*$. 
 It is natural to figure out the structures of its  symplectic leaves.
We would expect to realize the symplectic leaves of  the Lie-Poisson structure on $A^*$  as coadjoint orbits of Lie groupoids. Two problems come out in this case: first, for Lie algebroids and Lie groupoids, there is not a normal adjoint representation. People solve this by using the  first jet algeboids and the first jet groupoids  \cite{Lu, CSS} or by constructing a representation up to homotopy 
\cite{Lu, ACG}. We shall use the jet groupoid coadjoint acting on $A^*$ to realize the symplectic leaves. Second, for the Lie algebroid $TM$, the Poisson structure on $T^*M$ is symplectic, so it is a symplectic leaf itself.  
This example tells us that a coadjoint orbit of the  first jet groupoid on $A^*$ is not always a symplectic leaf since the linear action preserves the zero section. Actually, a symplectic 
leaf differs from a coadjoint orbit by the vertical translation given by $T^*M$. We get that a symplectic leaf is an orbit of the affine coadjoint action of $\mathcal{J}\mathcal{G}\ltimes T^*M$ on $A^*$, where $\mathcal{G}$ is the $r$-connected Lie groupoid of $A$. 

It is shown in this paper that  a symplectic leaf passing $\alpha\in A_x^*$ is a fiber bundle  over $T^*\mathcal{L}_x$ with fiber type $\mathcal{S}_{i^*\alpha}$, where $\mathcal{L}_x\subset M$ is the algebroid leaf passing $x$ and $\mathcal{S}_{i^*\alpha}$ is the coadjoint orbit passing $i^*\alpha$ in the dual of the isotropy Lie algebra $\mathrm{ker}\rho_x$. 
In particular, for the gauge algebroid $TP/G$ associated with a principal $G$-bundle $P$ over $M$, we recover the  universal phase space constructed by Weinstein in \cite{Weinstein} and the phase space constructed by Sternberg 
for a classical particle in a Yang-Mills field \cite{GS}.  Examples of symplectic leaves in $A^*$ are given. For a
 regular integrable distribution $F$ of a manifold,  a symplectic leaf is the cotangent bundle of a leaf of this distribution; For the Lie algebroid $T^*_\pi M$ for a Poisson manifold $M$,  a symplectic leaf is the tangent bundle of a symplectic leaf of $M$.


One more motivation for this work is that 
a coadjoint orbit of a Lie group is a symplectic homogeneous space, and up to coverings, every symplectic homogeneous space is a coadjoint  orbit if the first and second cohomology groups of its Lie algebra vanish;
see for example \cite{GS}. 
However, for the Lie groupoid case, although a symplectic leaf is still a homogeneous space of Lie groupoids, the action of $\mathcal{J}\mathcal{G}\ltimes T^*M$ does not preserve the symplectic structure on a leaf. 
A proper definition of a symplectic homogeneous space for Lie groupoids still needs to be fixed. 
Furthermore, except the symplectic foliation structure on a Poisson manifold, Weinstein's splitting theorem \cite{Weinstein1983} states that, locally, a poisson manifold is a product of a symplectic manifold  and a Poisson manifold of rank zero at the origin. So an interesting topic is to explore the transverse Poisson structure along a symplectic leaf in the dual of a Lie algebroid. We shall leave the questions mentioned above to our future work.

The paper is organized as follows. In the second section, we clarify the adjoint and coadjoint representations of the  first jet groupoid on a Lie algebroid and its dual. In the third section, we realize symplectic leaves of the Poisson structure on the dual of a Lie algebroid as affine coadjoint orbits. The fiber bundle structure on a symplectic leaf is analyzed in the fourth section. At the last section, we understand symplectic leaves as the groupoid orbits of the symplectic groupoid $T^*\mathcal{G}$ over $A^*$. 

{\bf Notations}: A Lie algebroid is denoted by $(A,\rho,[\cdot,\cdot])$, where $\rho:A\to TM$ is the anchor map. We always suppose  $A$ is integrable with $\mathcal{G}$ as its  $r$-connected Lie groupoid. For simplicity,  $r$ and $l$ are used to denote the source and target of  any Lie groupoid respectively and $p$ is used to denote the projection of any vector bundle to its base manifold. Since we only consider the first jets, we call the first jet groupoid and the first jet algebroid as the jet groupoid and the jet algebroid in short.



\subsection*{Acknowledgments}
We would like to thank C. Laurent-Gengoux and K. Mackenzie for useful discussions and comments.
Honglei Lang is also grateful to the Max Planck Institute for Mathematics, Bonn, where she was staying while most of this project was done, for the financial support and excellent working conditions.

\section{Coadjoint representation of Lie groupoids}

For a Lie groupoid, there is no natural way to define an ordinary adjoint representation.  One approach is to construct a representation up to homotopy of the Lie groupoid $\mathcal{G}$ on the  normal complex $A\ominus TM$ \cite{Lu}, where $A\to M$ is the Lie algebroid of $\mathcal{G}$. See also \cite{ACG} for more detailed discussion.  On the other hand, 
unlike the Lie groupoid $\mathcal{G}$ itself, the  first jet groupoid $\mathcal{J}\mathcal{G}$ has a natural representation on the normal complex $A\ominus TM$ \cite{Lu, CSS}. We refer to \cite{Mac} for the general theory of Lie groupoids and algebroids. 

Here we shall first recall the definitions of adjoint and coadjoint representations given by the second idea. Then we discuss the mechanical properties of the coadjoint representation.

 Given a Lie groupoid $\mathcal{G}\rightrightarrows M$ and a surjective smooth map $J:P\to M$, a {\bf left action} of $\mathcal{G}$ on $P$ along the map $J$, called the moment map, is a smooth map
\[\mathcal{G}\times_M P\to P,\qquad (g,p)\mapsto g\cdot p\]
satisfying the following:
\[J(g\cdot p)=l(g),\qquad (gh)\cdot p=g\cdot (h\cdot p),\qquad 1_{J(p)}\cdot p=p,\]
where $\mathcal{G}\times_M P=\{(g,p)|r(g)=J(p)\}$.
Then $P$ is called a {\bf left $\mathcal{G}$-space}. 
 A {\bf representation} of a groupoid $\mathcal{G}\rightrightarrows M$ is a vector bundle $E$ over $M$ with a linear action of $\mathcal{G}$ on $E$, i.e. for each arrow $g: x\to y$, the induced map $g\cdot : E_x\to E_y$ on the fibers is a linear isomorphism.

Let $\mathcal{G}$ be a Lie groupoid with Lie algebroid $A$. A bisection of $\mathcal{G}$ is a splitting $b:M\to \mathcal{G}$ of the source map $r$ with the property that $\phi_b:=l\circ b:M\to M$ is a diffeomorphism. The bisections of $\mathcal{G}$ form a group $\mathrm{Bis}(\mathcal{G})$ with the multiplication and inverse given by
\[b_1\cdot b_2(x)=b_1(\phi_{b_2}(x))b_2(x),\qquad b^{-1}(x)=\mathrm{inv}\circ b\circ \phi_b^{-1}(x).\]
The identity is the inclusion of $M$ in $\mathcal{G}$. Local bisections are defined similarly. In particular, $\mathrm{LBis_x}(\mathcal{G})$, consisting of local bisections on open neiborhoods of $x\in M$, is a group. 

The  {\bf first jet groupoid} $\mathcal{J} \mathcal{G}$ of a Lie groupoid $\mathcal{G}$, consisting of $1$-jets $j_x^1 b$ of local bisections of $\mathcal{G}$ at $x$, is a Lie groupoid with 
the groupoid structure  given by
\[r(j^1_x b)=x,\quad \quad l(j^1_x b)=\phi_b(x),\qquad
j^1_{\phi_{b_2}(x)} b_1\circ j_x^1 b_2=j_x^1(b_1\cdot b_2), \quad \quad (j_x^1 b)^{-1}=j_{\phi_b(x)}^1 (b^{-1}).\]

A bisection $b$ of $\mathcal{G}$ acts on $\mathcal{G}$ by conjugation
\[\mathrm{AD}_b (g)=b(l(g))\cdot g\cdot b(r(g))^{-1},\]
which maps units to units and source fibers to source fibers. Its differential depends only on $j_x^1 b$ and thus defines representations of $\mathcal{J}\mathcal{G}$ on $A$ and $TM$. 
\begin{Pro}\label{CSS}\cite{CSS}
The jet groupoid $\mathcal{J}\mathcal{G}$  represents naturally on $A$ and $TM$ by 
\[\Ad_{j_x^1 b}:A_x\to A_{\phi_b(x)},\qquad \Ad_{j_x^1 b} u=dR_{b(x)^{-1}}dm_{(b(x),x)}(db(\rho(u)),u),\qquad u\in A_x,\]
where $m:\mathcal{G}\times \mathcal{G}\to \mathcal{G}$ is the groupoid multiplication, and 
\[\Ad_{j_x^1 b}:T_x M \to T_{\phi_b(x)} M,\qquad \Ad_{j_x^1 b} X=(d\phi_b)_x X,\qquad  X\in T_x M.\]
Moreover, the anchor $\rho: A\to TM$ is  equivariant:
\begin{eqnarray}\label{compatible}
\Ad_{j_x^1 b} \circ \rho =\rho\circ \Ad_{j_x^1 b}
\end{eqnarray}
for the above two representations.\end{Pro}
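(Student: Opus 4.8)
The plan is to obtain both representations by linearizing, along the unit submanifold $M$, the conjugation action of bisections on $\mathcal{G}$, and to check that this linearization factors through first jets. Recall $\mathrm{AD}_b(g)=b(l(g))\cdot g\cdot b(r(g))^{-1}$. The two elementary observations to start from are $\mathrm{AD}_b(1_x)=1_{\phi_b(x)}$ and $r(\mathrm{AD}_b(g))=\phi_b(r(g))$, so that $\mathrm{AD}_b$ restricts to a diffeomorphism $r^{-1}(x)\to r^{-1}(\phi_b(x))$ fixing units. Its differential at $1_x$ therefore maps $T_{1_x}\mathcal{G}\to T_{1_{\phi_b(x)}}\mathcal{G}$, carries the subspace $A_x=\ker(dr)_{1_x}$ into $A_{\phi_b(x)}$, and descends through $dr$ to a map $T_xM\to T_{\phi_b(x)}M$; since on units $\mathrm{AD}_b$ is just $\phi_b$, this last map is $(d\phi_b)_x$. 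This produces the candidate linear actions on $A$ and on $TM$.

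To read off the stated formula and the dependence on $j^1_x b$ only, I would factor the restriction as $\mathrm{AD}_b|_{r^{-1}(x)}=R_{b(x)^{-1}}\circ L_b$, where $L_b(g)=m(b(l(g)),g)$ maps $r^{-1}(x)$ to itself with $L_b(1_x)=b(x)$, and differentiate by the chain rule. Using $dl|_{A_x}=\rho$ gives $d(L_b)_{1_x}(u)=dm_{(b(x),1_x)}\big(db_x(\rho(u)),u\big)$, hence
\[\Ad_{j_x^1 b}u \;=\; d(\mathrm{AD}_b)_{1_x}(u)\;=\;dR_{b(x)^{-1}}\,dm_{(b(x),1_x)}\big(db_x(\rho(u)),u\big),\qquad u\in A_x.\]
The right-hand side depends on $b$ only through $b(x)$ and $db_x$, hence only on $j^1_x b$; likewise $(d\phi_b)_x=(dl)_{b(x)}\circ(db)_x$ depends only on $j^1_x b$. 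Linearity is automatic for a differential, and smoothness of the resulting maps $\mathcal{J}\mathcal{G}\times_M A\to A$ and $\mathcal{J}\mathcal{G}\times_M TM\to TM$ follows from smoothness of $\mathrm{AD}$ and of its fibrewise differential.

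It then remains to check the groupoid-representation axioms and the equivariance. The key input is the functoriality $\mathrm{AD}_{b_1}\circ\mathrm{AD}_{b_2}=\mathrm{AD}_{b_1\cdot b_2}$ and $\mathrm{AD}_{\id}=\id_{\mathcal{G}}$, a short computation from the formula for the product of bisections together with $\phi_{b_1\cdot b_2}=\phi_{b_1}\circ\phi_{b_2}$. Differentiating these identities at the units and applying the chain rule yields $\Ad_{j^1_{\phi_{b_2}(x)}b_1}\circ\Ad_{j^1_x b_2}=\Ad_{j^1_x(b_1\cdot b_2)}$ and $\Ad_{j^1_x\id}=\id$, which are precisely compatibility with the multiplication $j^1_{\phi_{b_2}(x)}b_1\circ j^1_x b_2=j^1_x(b_1\cdot b_2)$ of $\mathcal{J}\mathcal{G}$ and with its units; the same argument applies verbatim on $TM$. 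In particular $\Ad_{j^1_x b}$ has inverse $\Ad_{j^1_{\phi_b(x)}(b^{-1})}$, so every fibre map is a linear isomorphism. The anchor equivariance \eqref{compatible} follows from $l\circ\mathrm{AD}_b(g)=\phi_b(l(g))$ (the target of a product is the target of its leftmost factor): differentiating at $1_x$ along $A_x$ gives $\rho(\Ad_{j_x^1 b}u)=(d\phi_b)_x(\rho(u))=\Ad_{j_x^1 b}(\rho(u))$.

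The only genuinely fiddly part is the bookkeeping in differentiating $\mathrm{AD}_b$: one must keep straight which tangent spaces — to a source fibre, to $M$, or to all of $\mathcal{G}$ — the various differentials live in, and verify the composability of the tangent pairs fed into $dm$. Once the conventions ($r$ the source, $l$ the target, $A=\ker dr|_M$, $\rho=dl|_A$) are fixed this is routine, and I expect no serious obstacle.
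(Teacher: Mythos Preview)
Your proposal is correct and follows precisely the approach the paper indicates: the paper does not give a detailed proof of this proposition (it is cited from \cite{CSS}), but the sentence immediately preceding it sketches exactly your argument—differentiate the conjugation $\mathrm{AD}_b$ along the units, observe that it preserves source fibres and units, and note that the resulting differential depends only on $j^1_x b$. Your write-up simply fills in the chain-rule bookkeeping and the verification of the representation axioms and anchor equivariance that the paper leaves implicit.
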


We call the representation $\Ad$ of $\mathcal{J}\mathcal{G}$ on $A$ the {\bf adjoint representation} of the Lie groupoid $\mathcal{G}$.
In particular, for $u\in \mathrm{ker}\rho_x$, we have
\[\Ad_{j_x^1 b}: \mathrm{ker} \rho_x\to \mathrm{ker} \rho_{\phi_b(x)},\qquad \Ad_{j_x^1 b} u=\Ad_{b(x)} u=dL_{b(x)} dR_{b(x)} (u).\]
The {\bf coadjoint representation} of $\mathcal{J}\mathcal{G}$ on $A^*$ is naturally defined as
\begin{eqnarray}\label{coadjoint}
\Ad^*_{j_x^1 b}: A_x^*\to A_{\phi_b(x)}^*,\qquad \langle \Ad^*_{j_x^1 b}  \alpha,u\rangle:=\langle \alpha, \Ad_{j_{\phi_b(x)}^1 b^{-1}} u\rangle, \qquad \alpha\in A_x^*, u\in A_{\phi_b(x)}.
\end{eqnarray}
Similarly, we have a representation of $\mathcal{J}\mathcal{G}$ on $T^*M$:
\[\Ad^*_{j_x^1 b}: T_x^* M\to T_{\phi_b(x)}^* M,\qquad \Ad^*_{j_x^1 b}(\gamma)=(d\phi_b^{-1})^*(\gamma),\qquad \gamma\in T_x^*M.\]
Let us have a look at the adjoint and coadjoint representation of the Lie groupoid $M\times G\times M\rightrightarrows M$, which is the direct product of the pair groupoid $M\times M$ and the Lie group $G$ and is also the gauge groupoid of the trivial principal $G$-bundle $M\times G$ over $M$.
\begin{Ex}
A local bisection $b$ of $M\times G\times M$ is a pair of maps $(f,l)$, where $f:U\subset M\to f(U)\subset M$ is a diffeomorphism, $l:U\to G$ is a smooth map, and $b(x)=(f(x),l(x),x)$.  We have $b^{-1}(x)=(f^{-1}(x),l(f^{-1}(x))^{-1},x),x\in f(U)$. Denote by 
$\tilde{l}:f(U)\to G$ the map $\tilde{l}(x)=l(f^{-1}(x))^{-1}$.

For  $(X,a)\in T_x M\times \mathfrak{g}$,  the adjoint representation is
 \begin{eqnarray*}
 \Ad_{j_x^1b}(X,a)&=&\frac{d}{dt}|_{t=0} \mathrm{AD}_b(\phi_t^X(x),e^{ta},x)=
 \frac{d}{dt}|_{t=0}\big(f(\phi_t^X(x)),l(\phi_t^X(x))e^{ta}l(x)^{-1},f(x))\\ &=&
(df(X), \Ad_{l(x)} a+dR_{l(x)^{-1}} dl(X),0).
\end{eqnarray*}
The coadjoint representation is thus
\begin{eqnarray*}
\langle \Ad_{j_x^1b}^*(\alpha,\mu),(Y,a)\rangle &=&
\langle (\alpha,\mu),(df^{-1}(Y), \Ad_{l(x)^{-1}} a+dR_{l(x)}d\tilde{l}(Y))\rangle\\ &=&
\langle (df^{-1})^*\alpha, Y\rangle+\langle \Ad_{l(x)}^* \mu,a\rangle+\langle \mu, dR_{l(x)}d\tilde{l}(Y)\rangle,
\end{eqnarray*}
for $(Y,a)\in T_{f(x)} M\times \mathfrak{g}$.
Notice that $\Ad_{j_x^1b}^*$ maps $\mu\in \mathfrak{g}^*$ not only to $\mathfrak{g}^*$ but also to  $T^*M$.
\end{Ex}

\begin{Pro}
Associated to the jet groupoid, there is a short exact sequence of groupoids
\[1\to \Hom(TM,A)^0\to \mathcal{J}\mathcal{G}\to \mathcal{G}\to 1.\]
Here $\Hom(TM,A)^0$ is defined as the space of units of $\Hom(TM,A)$ with  the multiplication 
\begin{eqnarray}\label{product}
\Phi\cdot \Psi=\Phi+\Psi-\Phi\circ \rho \circ \Psi,\quad \quad \Phi,\Psi\in \Hom(TM,A).                                      \end{eqnarray}
This is a Lie group bundle whose  infinitesimal  is the Lie algebra bundle
$(\Hom(TM,A),[\cdot,\cdot])$, where
\[[D,D']=D'\circ \rho\circ D-D\circ \rho \circ D',\qquad D,D'\in \Gamma(\Hom(TM,A)).\]
\end{Pro}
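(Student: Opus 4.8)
The plan is to make the sequence explicit and then compute the group law on the kernel and differentiate it. Write $u_M\colon M\to\mathcal{G}$ for the unit section, so that $A_x=\ker(dr)_{1_x}\subset T_{1_x}\mathcal{G}$, the anchor is $\rho=dl|_A$, and $(dr)\circ(du_M)_x=(dl)\circ(du_M)_x=\id_{T_xM}$. First I would define $\pi\colon\mathcal{J}\mathcal{G}\to\mathcal{G}$ by $\pi(j^1_xb)=b(x)$. Since $r(b(x))=x=r(j^1_xb)$ and $l(b(x))=\phi_b(x)=l(j^1_xb)$, and since the jet composition $j^1_{\phi_{b_2}(x)}b_1\circ j^1_xb_2=j^1_x(b_1\cdot b_2)$ together with $(b_1\cdot b_2)(x)=b_1(\phi_{b_2}(x))\,b_2(x)$ is compatible with the multiplication of $\mathcal{G}$, the map $\pi$ is a morphism of Lie groupoids over $\id_M$. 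Surjectivity is the standard fact that every arrow $g$ lies in the image of some local bisection (one builds a section of the submersion $r$ through $g$ whose composition with $l$ is a local diffeomorphism near $r(g)$).

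Next I would identify the kernel. An element $j^1_xb$ lies in $\ker\pi$ exactly when $b(x)=1_x$; then $db_x\colon T_xM\to T_{1_x}\mathcal{G}$ is a splitting of $(dr)_{1_x}$, so $\Phi:=(du_M)_x-db_x$ takes values in $\ker(dr)_{1_x}=A_x$, i.e.\ $\Phi\in\Hom(T_xM,A_x)$, and $j^1_xb\mapsto\Phi$ is injective on $\ker\pi$. The remaining condition that $b$ be a genuine bisection near $x$, namely that $\phi_b=l\circ b$ be a local diffeomorphism at $x$, reads $d(\phi_b)_x=(dl)\circ db_x=\id_{T_xM}-\rho\circ\Phi$ invertible, and conversely any such $\Phi$ is realised by an honest local bisection. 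So the fibre of $\ker\pi$ over $x$ is $\{\Phi\in\Hom(T_xM,A_x)\mid \id-\rho\circ\Phi\text{ invertible}\}$, which is precisely the fibrewise-open subbundle $\Hom(TM,A)^0\subset\Hom(TM,A)$ of elements invertible for the product \eqref{product}. This puts the sequence $1\to\Hom(TM,A)^0\to\mathcal{J}\mathcal{G}\to\mathcal{G}\to1$ in place at the level of sets and structure maps.

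The core step is to check that the group law on $\ker\pi$, transported along $\Phi=(du_M)_x-db_x$, is exactly \eqref{product}. For $b_1,b_2$ with $b_i(x)=1_x$ one has $\phi_{b_2}(x)=x$, so $j^1_xb_1\circ j^1_xb_2=j^1_x(b_1\cdot b_2)$, and differentiating $(b_1\cdot b_2)(y)=m\big(b_1(\phi_{b_2}(y)),b_2(y)\big)$ at $y=x$ gives
\[
d(b_1\cdot b_2)_x=dm_{(1_x,1_x)}\!\big(db_1\circ d(\phi_{b_2})_x,\ db_2\big),\qquad d(\phi_{b_2})_x=\id_{T_xM}-\rho\circ\Phi_2 .
\]
Into this I would feed the standard formula $dm_{(1_x,1_x)}(v,w)=v+w-(du_M)_x\big((dr)_{1_x}v\big)$ for the differential of $m$ at a unit (valid on composable pairs, where $(dr)v=(dl)w$, as one checks from $m(g,1_{r(g)})=g=m(1_{l(g)},g)$). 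Using $(dr)\circ(du_M)=\id$, $(dr)|_A=0$ and $(dl)|_A=\rho$, the $du_M$-terms cancel and one is left with $d(b_1\cdot b_2)_x=(du_M)_x-\big(\Phi_1+\Phi_2-\Phi_1\circ\rho\circ\Phi_2\big)$; that is, the induced product is $\Phi_1\cdot\Phi_2$ as in \eqref{product}, the minus sign being forced by the choice $\Phi=(du_M)_x-db_x$. The same algebra gives $\id-\rho(\Phi\cdot\Psi)=(\id-\rho\Phi)(\id-\rho\Psi)$, confirming that \eqref{product} does restrict to a group law on $\Hom(TM,A)^0$, with $\Phi^{-1}=-(\id-\Phi\circ\rho)^{-1}\circ\Phi$.

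Finally, smoothness of \eqref{product} and of inversion, with the zero section as the identity, exhibits $\Hom(TM,A)^0$ as a bundle of Lie groups; each fibre over $x$ is an open neighbourhood of $0$ in the vector space $\Hom(T_xM,A_x)$, so its Lie algebra is that whole vector space and the bracket is the antisymmetrised quadratic part of \eqref{product}: for curves $\Phi(s),\Psi(t)$ through $0$ with $\Phi'(0)=D$, $\Psi'(0)=D'$,
\[
[D,D']=\frac{\partial^2}{\partial s\,\partial t}\Big|_{s=t=0}\big(\Phi(s)\cdot\Psi(t)-\Psi(t)\cdot\Phi(s)\big)=D'\circ\rho\circ D-D\circ\rho\circ D',
\]
which identifies the infinitesimal with $(\Hom(TM,A),[\cdot,\cdot])$ (equivalently, this is the isotropy subalgebroid in the jet sequence $0\to\Hom(TM,A)\to\mathcal{J}A\to A\to0$ for $\mathcal{J}A=\Lie(\mathcal{J}\mathcal{G})$). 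I expect the main obstacle to be this third step: one must substitute the genuine differential $d(\phi_{b_2})_x=\id-\rho\circ\Phi_2$, not $\id$, into the correct formula for the differential of the groupoid multiplication at a unit — it is precisely the $\rho$-term there that produces the quadratic correction $\Phi\circ\rho\circ\Phi$ — and keep the sign conventions of the kernel identification and of $dm$ mutually consistent throughout.
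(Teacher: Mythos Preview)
Your proof is correct and follows the same strategy as the paper: define $\pi(j_x^1b)=b(x)$, identify $\ker\pi$ with $\Hom(TM,A)^0$ via the difference $db_x-(du_M)_x$, and obtain the product law by differentiating the bisection multiplication at a unit. The only notable difference is that you compute $d(b_1\cdot b_2)_x$ in full using the formula for $dm_{(1_x,1_x)}$, whereas the paper's sketch computes only $d\phi_{b_1\cdot b_2}=(dl)\circ d(b_1\cdot b_2)$, which strictly speaking determines just $\rho\circ(\Phi\cdot\Psi)$; your version is thus slightly more complete, and your sign convention $\Phi=(du_M)_x-db_x$ (opposite to the paper's $db_x=\id+\Phi$) lands directly on the stated formula with the minus sign.
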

\begin{proof}
We sketch the proof here. The groupoid morphism from $\mathcal{J}\mathcal{G}$ to $\mathcal{G}$ is $j_x^1 b\mapsto b(x)$. For an element $\Phi\in \Hom(T_x M,A_x)$, let $b_1$ be any local bisection of $\mathcal{G}$ such that \[b_1(x)=1_x, \qquad (db_1)_x=id+\Phi: T_xM \to T_{x}\mathcal{G}=T_x M\oplus A_x.\]
$l\circ b_1$ being a diffeomorphism forces $\Phi$ to satisfy that $id+\rho\circ \Phi$ is invertible, which is equivalent to the condition that $\Phi$ is invertible with respect to the multiplication (\ref{product}) by direct calculation. This justifies the exactness of the sequence. 

For $\Phi,\Psi\in \Hom(T_x M,A_x)$, choose local bisections $b_1$ and $b_2$ such that $b_1(x)=b_2(x)=1_x$ and $(db_1)_x=id+\Phi$ and $(db_2)_x=id+\Psi$. The product of $b_1$ and $b_2$ is 
$(b_1\cdot b_2)(y)=b_1(\phi_{b_2}(y))b_2(y)$. Since $(d\phi_{b_1})_x=(dl\circ b_1)_x=id+\rho\circ \Phi$, we have
\[(d\phi_{b_1\cdot b_2})_x=(d(\phi_{b_1}\circ \phi_{b_2}))_x=(id+\rho\circ \Phi)\circ (id+\rho\circ \Psi)=id+\rho\circ (\Phi+\Psi+\Phi\circ \rho\circ \Psi).\]
This shows that the multiplication in $\Hom(TM,A)^0$ is (\ref{product}). 

It is routine to check that the Lie algebra of $\Hom(T_x M,A_x)^0$ is $(\Hom(T_x M,A_x),[\cdot,\cdot])$. We omit the proof here.
\end{proof}
For  a vector bundle $A$, its first jet bundle $\mathcal{J} A$ fits into the following short exact sequence of vector bundles
\[0\to \Hom(TM,A)\to \mathcal{J}A\to A\to 0.\]
Although it does not have a canonical splitting, at the level of sections, it does: $u\to \mathbbm{d} u$ for $u\in \Gamma(A)$. This gives an identification
\[\Gamma(\mathcal{J}A)\cong\Gamma(A)\oplus \Gamma(T^*M\otimes A).\]
So a section of $\mathcal{J}A$ is written as $\mathbbm{d} u+D$ for $u\in \Gamma(A)$ and $D\in \Gamma(T^*M\otimes A)$ and the $C^\infty(M)$-module structure becomes
\[f(\mathbbm{d}u+D)=\mathbbm{d}(fu)-df\otimes u+fD,\qquad u\in \Gamma(A), f\in C^\infty(M).\]
\begin{Pro}\label{omni-Lie algebroid}\cite{omni-Lie algebroid}
If $A$ is a Lie algebroid, then there is a Lie algebroid structure on $\mathcal{J}A$ with anchor  $\rho_{\mathcal{J} A}(\mathbbm{d}u+D)=\rho(u)$ and the bracket
\[[\mathbbm{d}u,\mathbbm{d}v]=\mathbbm{d}[u,v]_A,\quad [\mathbbm{d}u, D]=[u,D(\cdot)]_A-D[\rho(u),\cdot]_{TM},\quad[D,D']=-D\circ\rho\circ D'+D'\circ \rho\circ D.\]
Here $D,D'\in\Gamma(T^*M \otimes A)\cong \Gamma(\Hom(TM,A))$.
\end{Pro}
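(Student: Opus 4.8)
The plan is to \emph{define} the bracket by $\mathbb{R}$-bilinear extension of the three displayed formulas and then to verify the Lie algebroid axioms directly. By the identification $\Gamma(\mathcal{J}A)\cong\Gamma(A)\oplus\Gamma(T^*M\otimes A)$, every section of $\mathcal{J}A$ is \emph{uniquely} of the form $\mathbbm{d}u+D$ (the projection $\pi:\mathcal{J}A\to A$ recovers $u$, hence $D$); so, together with $[D,\mathbbm{d}u]:=-[\mathbbm{d}u,D]$, the three formulas determine a well-defined, $\mathbb{R}$-bilinear and manifestly skew-symmetric operation on $\Gamma(\mathcal{J}A)$. The first point to check is that the right-hand sides genuinely take values in $\Gamma(\mathcal{J}A)$: the only nontrivial assertion is that $X\mapsto[u,D(X)]_A-D[\rho(u),X]_{TM}$ is $C^\infty(M)$-linear in $X$, which is a one-line computation from the Leibniz rules of $[\cdot,\cdot]_A$ and of the Lie bracket of vector fields, so $[\mathbbm{d}u,D]$ is a genuine section of $\Hom(TM,A)\subset\mathcal{J}A$; the term $[D,D']$ is visibly tensorial.

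Next I would verify the Leibniz identity $[\xi,f\eta]=f[\xi,\eta]+(\rho_{\mathcal{J}A}(\xi)f)\,\eta$ for $\rho_{\mathcal{J}A}(\mathbbm{d}u+D)=\rho(u)$. The decisive case is $\xi=\mathbbm{d}u$, $\eta=\mathbbm{d}v$: using $f\,\mathbbm{d}v=\mathbbm{d}(fv)-df\otimes v$ and $\mathbbm{d}(fw)=f\,\mathbbm{d}w+df\otimes w$, one writes
\[[\mathbbm{d}u,f\,\mathbbm{d}v]=\mathbbm{d}[u,fv]_A-[\mathbbm{d}u,df\otimes v]\]
and finds that the prescribed value of $[\mathbbm{d}u,df\otimes v]$ is exactly what cancels the $df\otimes[u,v]_A$ and $d(\rho(u)f)\otimes v$ terms produced by $\mathbbm{d}[u,fv]_A$, leaving $f\,\mathbbm{d}[u,v]_A+(\rho(u)f)\,\mathbbm{d}v$; this simultaneously checks that the bracket is compatible with the $C^\infty(M)$-module structure. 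The cases in which $\xi$ or $\eta$ is of the form $D$ are easier since $D\mapsto fD$ is $C^\infty(M)$-linear, and Leibniz in the first slot follows from it together with skew-symmetry. It is convenient to record here that $\rho_{\mathcal{J}A}=\rho\circ\pi$ is a bracket morphism: the $A$-component of $[\xi,\eta]$ equals $[u,v]_A$, and $\rho$ is a morphism for $A$.

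The last and most laborious step is the Jacobi identity. Since the bracket is skew-symmetric, satisfies Leibniz, and $\rho_{\mathcal{J}A}$ is a bracket morphism, the Jacobiator $J(\xi,\eta,\zeta)=[[\xi,\eta],\zeta]+[[\eta,\zeta],\xi]+[[\zeta,\xi],\eta]$ is $C^\infty(M)$-trilinear, so it is enough to check $J\equiv0$ on the four generating triples $(\mathbbm{d}u,\mathbbm{d}v,\mathbbm{d}w)$, $(\mathbbm{d}u,\mathbbm{d}v,D)$, $(\mathbbm{d}u,D,D')$ and $(D,D',D'')$. The first reduces to applying $\mathbbm{d}$ to the Jacobi identity of $[\cdot,\cdot]_A$; the last to the Jacobi identity for the commutator of an associative product on $\Gamma(\Hom(TM,A))$; and the two mixed ones unwind, using the Jacobi identity and Leibniz rule of $A$ together with the equivariance $\rho([u,v]_A)=[\rho(u),\rho(v)]$, into identities in $\Gamma(\Hom(TM,A))$ whose terms cancel in pairs. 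I expect the triple $(\mathbbm{d}u,D,D')$ --- which at once involves $[\cdot,\cdot]_A$, the anchor, and composition of $\Hom(TM,A)$-valued maps --- to be the one where the bookkeeping is most error-prone, so that is the main obstacle, although it is entirely mechanical.

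Finally, as a shortcut available in the present setting where $A$ is integrable, the previous proposition realizes $\mathcal{J}\mathcal{G}$ as a Lie groupoid whose Lie algebroid sits in the same extension $0\to\Hom(TM,A)\to\mathcal{J}A\to A\to0$; differentiating the multiplication of $\mathcal{J}\mathcal{G}$ then yields on $\mathcal{J}A=\Lie(\mathcal{J}\mathcal{G})$ exactly the anchor and bracket displayed above. For the general, possibly non-integrable statement we refer to \cite{omni-Lie algebroid}.
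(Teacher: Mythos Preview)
The paper does not prove this proposition: it is quoted verbatim from the reference \cite{omni-Lie algebroid} (Chen--Liu), and the only thing the paper adds is the sentence right after it, that $\mathcal{J}A$ is the Lie algebroid of the jet groupoid $\mathcal{J}\mathcal{G}$. So there is no proof in the paper to compare your attempt against; your final paragraph, invoking exactly that integration shortcut, is in fact the closest the paper comes to an argument.

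Your direct-verification plan is nonetheless correct. The $C^\infty(M)$-linearity of $X\mapsto[u,D(X)]_A-D[\rho(u),X]$ and the Leibniz check for $[\mathbbm{d}u,f\,\mathbbm{d}v]$ go through as you describe. For the Jacobi identity, note that the reduction to the four types of triples already follows from $\mathbb{R}$-trilinearity and the unique decomposition $\mathbbm{d}u+D$; the tensoriality of the Jacobiator is true but not strictly needed. Your observation for the $(D,D',D'')$ case is right: $D\star D':=D\circ\rho\circ D'$ is associative, and $[D,D']=D'\star D-D\star D'$ is (minus) its commutator, hence satisfies Jacobi. The two mixed cases are indeed mechanical; the $(\mathbbm{d}u,D,D')$ one uses $\rho[u,w]_A=[\rho(u),\rho(w)]$ applied with $w=D(X)$ or $w=D'(X)$, and all terms cancel in pairs as you anticipate.
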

The jet algebroid $\mathcal{J}A$ is the Lie algebroid of the jet groupoid $\mathcal{J}\mathcal{G}$. We thus obtain the infinitesimal adjoint and coadjoint representations of $\mathcal{J}A$ on $A$ and $A^*$ respectively.
\begin{Lem}\label{infinitesimal actions}
We have that the adjoint representation $\mathrm{ad}: \mathcal{J} A\to \mathcal{D} A$ of $\mathcal{J}A$ on $A$  is given by
\begin{eqnarray}\label{inf adjoint}
\mathrm{ad}(\mathbbm{d} u)=[u, \cdot]_A, \qquad \mathrm{ad}(D)=-\rho^*(D),\qquad u\in \Gamma(A), D\in \Gamma(\Hom(TM,A)),
\end{eqnarray}
and the coadjoint representation of $\mathcal{J}A$ on $A^*$ is given by
\[(\mathbbm{d}u)\triangleright \alpha=\mathcal{L}_{u} \alpha,\qquad D\triangleright \alpha=i_\alpha\rho^*(D),\qquad \alpha\in \Gamma(A^*).\]
\end{Lem}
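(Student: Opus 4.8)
The plan is to compute $\mathrm{ad}$ as the Lie functor applied to the groupoid representation $\Ad$ of Proposition~\ref{CSS}, and then to read off the coadjoint action by dualizing. A representation of the Lie algebroid $\mathcal{J}A$ on $A$ is the same thing as a flat $\mathcal{J}A$-connection, so $\mathrm{ad}\colon\mathcal{J}A\to\mathcal{D}A$ is $C^\infty(M)$-linear in its argument; in particular it is a vector bundle morphism and is determined by its values on a set of sections spanning $\Gamma(\mathcal{J}A)$ over $C^\infty(M)$. Using the splitting $\Gamma(\mathcal{J}A)\cong\Gamma(A)\oplus\Gamma(T^*M\otimes A)$ recalled above, it therefore suffices to evaluate $\mathrm{ad}$ on sections of the form $\mathbbm{d}u$ with $u\in\Gamma(A)$ and on $D\in\Gamma(T^*M\otimes A)$.

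For $u\in\Gamma(A)$ I would integrate $u$ to a flow of local bisections $b_t$ of $\mathcal{G}$ (with $b_0$ the unit bisection); then $j^1 b_t$ is the corresponding flow of local bisections of $\mathcal{J}\mathcal{G}$ whose infinitesimal generator is $\mathbbm{d}u\in\Gamma(\mathcal{J}A)$, so $\mathrm{ad}(\mathbbm{d}u)$ is obtained by differentiating at $t=0$ the pullback of sections of $A$ along the pair $(\phi_{b_t},\Ad_{j^1 b_t})$. Inserting the explicit formula of Proposition~\ref{CSS} into this derivative reduces it to the classical computation identifying the bracket on $A=\Lie(\mathcal{G})$ with the linearization of the conjugation action $\mathrm{AD}_{b_t}$ of bisections, which yields $\mathrm{ad}(\mathbbm{d}u)=[u,\cdot]_A$ (see \cite{Mac}). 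I expect this to be the only place where genuine work, and in particular a careful check of sign conventions, is needed.

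The value of $\mathrm{ad}$ on $D\in\Gamma(T^*M\otimes A)$ then follows formally, avoiding a second jet computation. At a point of $M$ write $D=\sum_i df_i\otimes u_i$ with $f_i\in C^\infty(M)$, $u_i\in\Gamma(A)$; since $\mathbbm{d}(fu)=f\,\mathbbm{d}u+df\otimes u$, we have $df_i\otimes u_i=\mathbbm{d}(f_iu_i)-f_i\,\mathbbm{d}u_i$ in $\Gamma(\mathcal{J}A)$. Applying the bundle map $\mathrm{ad}$, the previous step, and the Leibniz rule $[fu,v]_A=f[u,v]_A-(\rho(v)f)u$, one gets
\[
\mathrm{ad}(D)\,v=\sum_i\big([f_iu_i,v]_A-f_i[u_i,v]_A\big)=-\sum_i\big(\rho(v)f_i\big)u_i=-\sum_i df_i(\rho(v))\,u_i=-D(\rho(v)),
\]
that is, $\mathrm{ad}(D)=-\rho^*(D)$, as claimed.

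Finally, the coadjoint representation of $\mathcal{J}A$ on $A^*$ is by definition the contragredient of $\mathrm{ad}$: for $\xi\in\Gamma(\mathcal{J}A)$, $\alpha\in\Gamma(A^*)$, $v\in\Gamma(A)$,
\[
\langle \xi\triangleright\alpha, v\rangle=\rho_{\mathcal{J}A}(\xi)\langle\alpha,v\rangle-\langle\alpha,\mathrm{ad}(\xi)v\rangle .
\]
Taking $\xi=\mathbbm{d}u$, one has $\rho_{\mathcal{J}A}(\mathbbm{d}u)=\rho(u)$ and $\mathrm{ad}(\mathbbm{d}u)v=[u,v]_A$, so the right-hand side equals $\rho(u)\langle\alpha,v\rangle-\langle\alpha,[u,v]_A\rangle=\langle\mathcal{L}_u\alpha,v\rangle$ by the definition of the Lie derivative of a section of $A^*$; hence $(\mathbbm{d}u)\triangleright\alpha=\mathcal{L}_u\alpha$. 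Taking $\xi=D$, one has $\rho_{\mathcal{J}A}(D)=0$ and $\mathrm{ad}(D)v=-D(\rho(v))$, so $\langle D\triangleright\alpha,v\rangle=\langle\alpha,D(\rho(v))\rangle=\langle i_\alpha\rho^*(D),v\rangle$; hence $D\triangleright\alpha=i_\alpha\rho^*(D)$. This establishes both formulas.
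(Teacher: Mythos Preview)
Your proof is correct and follows the same overall scheme as the paper: differentiate the groupoid representation $\Ad$ to obtain $\mathrm{ad}$, then pass to the dual by the contragredient rule. The paper's version is terser---it writes down the general formula $u\triangleright e(x)=\frac{d}{dt}\big|_{t=0}\phi_t^u(x)^{-1}\cdot e(l\circ\phi_t^u(x))$ for the induced algebroid action and the matching formula for the dual, then simply says ``results are direct from these formulas''. Your treatment of $\mathrm{ad}(D)$ is a small but genuine improvement in presentation: rather than computing a second flow, you exploit the $C^\infty(M)$-linearity of $\mathrm{ad}$ together with the jet identity $df\otimes u=\mathbbm{d}(fu)-f\,\mathbbm{d}u$ to reduce everything to the already-established case $\mathrm{ad}(\mathbbm{d}u)=[u,\cdot]_A$. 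The paper does use this same trick, but only later (in the proof of Proposition~\ref{symp1}); pulling it forward here makes the argument cleaner and avoids an extra differentiation.
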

\begin{proof}
In general, an action of a Lie groupoid $\mathcal{G}$  on a vector bundle $E$ induces an action of its Lie algebroid $A$ on $E$:
\[\triangleright : \Gamma(A)\times \Gamma(E)\to \Gamma(E),\qquad u\triangleright e (x)=\frac{d}{dt}|_{t=0} \phi_t^u(x)^{-1}\cdot e(l\circ \phi_t^u(x))\in E_x,\qquad u\in \Gamma(A), e\in \Gamma(E),\]
where $\phi_t^u(x)\subset r^{-1}(x)$ is a flow of $u$. Moreover, there is an induced dual action of $\mathcal{G}$ on $E^*$ given by 
\[\langle g\cdot \xi(x),e(y)\rangle:=\langle \xi(x),g^{-1}\cdot e(y)\rangle,\qquad \xi\in \Gamma(E^*), g:x\to y.\]
Thus, by direct calculation, we get the induced dual action of $A$ on $E^*$:
\begin{eqnarray*}
\langle u\triangleright \xi,e\rangle(x)&=&\frac{d}{dt}|_{t=0}\langle \phi_t^u(x)^{-1}\cdot \xi(l\circ \phi_t^u(x)),e(x)\rangle=\frac{d}{dt}|_{t=0} \langle \xi(l\circ \phi_t^u(x)),\phi_t^u(x)\cdot e(x)\rangle\\ &=&\rho(u)\langle \xi,e\rangle (x)-\langle \xi, u\triangleright e\rangle(x).
\end{eqnarray*}
The results are direct from these formulas.
\end{proof}
Similarly, as $\mathcal{J}\mathcal{G}$ represents on $TM$, we have that
the Lie algebroid $\mathcal{J} A$ also represents on $TM$: 
\begin{eqnarray}\label{inf adjoint2}
(\mathbbm{d}u)\triangleright X=[\rho(u), X]_{TM},\qquad D\triangleright X=-i_X\rho(D),\qquad X\in \mathfrak{X}(M).
\end{eqnarray}
This induces a dual representation of $\mathcal{J} A$ on $T^*M$ given by
\begin{eqnarray}\label{dual1}
(\mathbbm{d}u)\triangleright \gamma =\mathcal{L}_{\rho(u)} \gamma,\qquad D\triangleright \gamma=i_\gamma \rho(D),\qquad \gamma\in \Omega^1(M),
\end{eqnarray}
where $\mathcal{L}_X: T^*M\to T^*M$ and $i_X: T^*M\otimes TM\to TM$ are the Lie derivative and contraction. 


Let us see some examples of jet algebroids and groupoids.

\begin{Ex}\label{jet groupoid}
The jet algebroid of a Lie algebra is the Lie algebra itself and the jet groupoid of a Lie group is the Lie group iteself; The jet algebroid of $TM$ is $\mathcal{D}(TM)$, the bundle of covariant differential operators, and the 
 jet groupoid of the pair groupoid $M\times M$ is the generalized linear groupoid $\mathrm{gl}(TM)$, whose arrows between two points $x,y\in M$ consist of  linear isomorphisms $T_x M \to T_yM$ \cite{Mac,omni-Lie algebroid}.

For  a transformation Lie algebroid $\mathfrak{g}\triangleright M$, it has a natural flat connection which is zero on the constant section. So its  first jet bundle splits into
 \[\mathcal{J} (\mathfrak{g}\times M)\cong\mathfrak{g}\times M\oplus T^*M\otimes \mathfrak{g},\]
 where $v\in \Gamma(\mathfrak{g}\times M)$ gives a section $\mathbbm{d}v-\nabla v\in \Gamma(\mathcal{J}(\mathfrak{g}\times M))$.
We then get a Lie algebroid structure on $\mathfrak{g}\times M\oplus T^*M\otimes \mathfrak{g}$ such that 
the the transformation Lie algebroid $\mathfrak{g}\triangleright M$ and the Lie algebra bundle $T^*M\otimes \mathfrak{g}$ are Lie subalgebroids and the mixed bracket is determined by  
\begin{eqnarray}\label{A on E}
[u,D]=[u,D(\cdot)]_{\mathfrak{g}}-D[\hat{u},\cdot]_{TM},\qquad u\in \mathfrak{g}, D\in \Gamma(T^*M\otimes \mathfrak{g}),
\end{eqnarray}
where $\hat{u}$ is the fundamental vector field of $u$ on $M$. Actually, this determines an action of $\mathfrak{g}\triangleright M$ on $T^*M\otimes \mathfrak{g}$ by derivations.

And the  first jet groupoid $\mathcal{J}(G\triangleright M)$ is isomorphic to
\[(G\times M)\times_M \Hom(TM,\mathfrak{g})^0\rightrightarrows M\]
with the groupoid structure 
\[(g,hx,\Psi)\cdot (h,x,\Phi)=(gh,x,(h,x)^{-1}\cdot \Psi+\Phi-((h,x)^{-1}\cdot \Psi)\circ \rho\circ \Phi),\]
where $(h,x)^{-1}\cdot \Psi$ is the action of $G\triangleright M$ on $T^*M\otimes \mathfrak{g}$ integrating (\ref{A on E}).  It has 
the transformation groupoid  $G\triangleright M$ and the group bundle $\Hom(TM,\mathfrak{g})^0$ (given by (\ref{product})) as Lie subgroupoids.

\end{Ex}

Similar to the Lie algebra case, a Lie algebroid $(A,[\cdot,\cdot],\rho)$ also gives a Poisson structure on its dual bundle $A^*$. Observe that the space of functions on $A^*$ are generated by two kinds of functions: $\Gamma(A)$ and $C^\infty(M)$. A section $u\in \Gamma(A)$ defines a function $l_u\in C^\infty(A^*)$ by $l_u(\alpha)=\langle u(x), \alpha\rangle, \forall \alpha\in A_x^*$.  A function $f\in C^\infty(M)$ gives naturally a function $p^* f$ on $A^*$, where $p: A^*\to M$ is the projection. The Poisson bracket on $A^*$ is given by
\[\{l_u,l_v\}=l_{[u,v]},\qquad \{l_u,p^* f\}=p^*(\rho(u)f),\qquad \{p^*f,p^*f'\}=0.\]
We call this Poisson structure the {\bf Lie-Poisson structure} on $A^*$.

For the Lie algebra case, we know the coadjoint action of the Lie group $G$ on $\mathfrak{g}^*$ preserves the symplectic structure on a symplectic leaf of the Lie Poisson structure on $\mathfrak{g}^*$ and it is further a Hamiltonian action. While for a Lie algebroid $A$,  the coadjoint action of the jet groupoid $\mathcal{J}\mathcal{G}$ does not always  preserve the Lie-Poisson structure on $A^*$ in the sense that the fundamental vector fields of $\Gamma(\mathcal{J} A)$ on $A^*$ are not always Poisson vector fields. Here  a Poisson vector field on a Poisson manifold $(M,\pi)$ is a vector field $X$ on $M$ such that $L_X \pi=0$.

\begin{Pro}\label{symp1}
The symplectic leaves of $A^*$ are invariant under the coadjoint action of the jet groupoid $\mathcal{J}\mathcal{G}$. That is, the coadjoint action is tangent to the symplectic leaves of $A^*$. Moreover, we have, for a section $\mathbbm{d}u+D$ of $\mathcal{J}A$,
\begin{itemize}
\item[(1)] its fundamental vector field on $A^*$ is a Poisson vector field iff
$D\circ \rho\in \mathrm{Der}(A)$, i.e.,\[D\circ \rho[u,v]=[D\circ \rho(u),v]+[u,D\circ \rho(v)],\qquad u,v\in \Gamma(A).\]
\item[(2)] its fundamental vector field on $A^*$ is a Hamiltonian vector field  iff $D\circ \rho=0$ and the corresponding Hamiltonian function is $l_u\in C^\infty(A^*)$.
\end{itemize}
\end{Pro}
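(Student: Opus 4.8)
The plan is to work with the explicit formulas for the coadjoint action derived in Lemma \ref{infinitesimal actions} and compute directly how the fundamental vector field $X_{\mathbbm{d}u+D}$ on $A^*$ acts on the generating functions $l_v$ and $p^*f$. First I would recall that for the infinitesimal coadjoint action we have $(\mathbbm{d}u)\triangleright\alpha=\mathcal{L}_u\alpha$ and $D\triangleright\alpha=i_\alpha\rho^*(D)$, so the corresponding fundamental vector field on $A^*$ is the sum of the cotangent-type lift of $u$ (whose action on $l_v$ is $l_{[u,v]}$ and on $p^*f$ is $p^*(\rho(u)f)$ — this is exactly the Hamiltonian vector field of $l_u$) and the fibrewise-linear vector field determined by $D\circ\rho$. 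Concretely, the $D$-part $X_D$ satisfies $X_D(l_v)=l_{D\circ\rho(v)}$ — or its negative depending on sign conventions — and $X_D(p^*f)=0$, because $D\circ\rho$ is a bundle endomorphism of $A$ covering the identity and such endomorphisms generate vertical linear flows on $A^*$.

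Granting that, the tangency to symplectic leaves is immediate: $X_{\mathbbm{d}u}$ is the Hamiltonian vector field of $l_u$, hence tangent to the symplectic foliation, and $X_D$, being vertical and linear, is tangent to the leaves because a linear vector field of the form $l_v\mapsto l_{N(v)}$ for $N=D\circ\rho\in\mathrm{End}(A)$ is generated by the gauge transformations of $A$, which act on $A^*$ fibrewise and preserve the characteristic distribution of the Lie–Poisson structure (the image of $\pi^\sharp$ at $\alpha\in A^*_x$ is spanned by the differentials of $l_v$ and $p^*f$, and one checks $\pi^\sharp$ is unchanged under the linear flow up to reparametrization). Alternatively, and perhaps more cleanly, one observes that $X_D$ is a \emph{multiplicative} vector field on $A^*$ coming from a derivation of $A$, so it is automatically tangent to the leaves. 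I would choose whichever phrasing is shortest in context.

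For part (1): $X_{\mathbbm{d}u+D}$ is a Poisson vector field iff $\mathcal{L}_{X_{\mathbbm{d}u+D}}\pi=0$. Since $X_{\mathbbm{d}u}$ is Hamiltonian it is already Poisson, so the condition reduces to $\mathcal{L}_{X_D}\pi=0$. Now I would evaluate $\mathcal{L}_{X_D}\pi$ on pairs of exact generators: on $(dl_v,dl_w)$ it gives $X_D\{l_v,l_w\}-\{X_D l_v,l_w\}-\{l_v,X_D l_w\}$, which with $X_D l_v=l_{N(v)}$ becomes $l_{N[v,w]}-l_{[N(v),w]}-l_{[v,N(w)]}$; this vanishes identically in $v,w$ exactly when $N=D\circ\rho$ is a derivation of the bracket, i.e. the stated condition. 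On $(dl_v,d(p^*f))$ and $(d(p^*f),d(p^*f'))$ the terms vanish automatically because $X_D$ kills $p^*f$ and $\{p^*f,p^*f'\}=0$ and $\{l_v,p^*f\}=p^*(\rho(v)f)$ is annihilated by $X_D$ as well (one checks $X_D\{l_v,p^*f\}=\{X_Dl_v,p^*f\}$ since $N$ covers the identity). So the only obstruction is the Leibniz-type identity for $D\circ\rho$ on $l$-functions, giving (1).

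For part (2): if $X_{\mathbbm{d}u+D}$ is Hamiltonian, say $=X_h$, then in particular it is Poisson, so by (1) $D\circ\rho\in\mathrm{Der}(A)$; moreover, restricting to the zero section $M\hookrightarrow A^*$, where $X_{\mathbbm{d}u}$ vanishes along $M$ (its value there is $\rho(u)\in TM\subset TA^*|_M$, actually nonzero in general) — here I need to be a little careful, so instead I would argue on functions: $X_{\mathbbm{d}u+D}=X_h$ forces $X_{\mathbbm{d}u+D}(p^*f)=\{h,p^*f\}$ for all $f$, and since $X_D(p^*f)=0$ we get $X_{\mathbbm{d}u}(p^*f)=p^*(\rho(u)f)=\{h,p^*f\}$, which pins down the Hamiltonian vector field of $h$ to agree with that of $l_u$ on fibre-constant functions; together with $X_{\mathbbm{d}u+D}(l_v)=l_{[u,v]}+l_{D\circ\rho(v)}\overset{!}{=}\{h,l_v\}$ and comparing with $h=l_u$ (which gives $\{l_u,l_v\}=l_{[u,v]}$), we deduce $l_{D\circ\rho(v)}=0$ for all $v$, hence $D\circ\rho=0$ and $h=l_u$ up to a Casimir; since Casimirs of the Lie–Poisson structure are pulled-back functions that are $\rho$-invariant and contribute nothing new, the Hamiltonian is $l_u$. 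The main obstacle is bookkeeping the sign and the precise form of $X_D$ on $l_v$ (i.e. confirming $X_D(l_v)=\pm l_{D\circ\rho(v)}$ from the formula $D\triangleright\alpha=i_\alpha\rho^*(D)$ in Lemma \ref{infinitesimal actions}); once that normalization is fixed everything else is a short computation on generators.
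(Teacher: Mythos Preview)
Your overall strategy matches the paper's --- split the fundamental vector field as $X_{\mathbbm{d}u}+X_D$, identify $X_{\mathbbm{d}u}$ with the Hamiltonian vector field of $l_u$, and analyse the endomorphism part $X_D$ separately --- but the tangency argument for $X_D$ has a genuine gap. Both justifications you offer are incorrect: a general bundle endomorphism $N\in\End(A)$ does \emph{not} induce a linear vector field on $A^*$ tangent to the symplectic foliation (already for a Lie algebra, $N=\id$ gives the Euler vector field on $\mathfrak{g}^*$, which is transverse to coadjoint orbits), and your alternative (``$X_D$ comes from a derivation of $A$'') is circular, since by part~(1) that happens precisely when $X_D$ is Poisson, not always. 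What makes tangency work is the special form $N=D\circ\rho$: dualising, $X_D(\alpha)=-\rho^*(D^*\alpha)$ lies in $\im\rho^*_x$, and $\im\rho^*_x$ is exactly the span of the vertical Hamiltonian vectors $X_{p^*f}(\alpha)$. The paper packages this via the $C^\infty(M)$-module relation $df\otimes u=\mathbbm{d}(fu)-f\,\mathbbm{d}u$, giving $\ad(df\otimes u)=X_{fl_u}-f\,X_{l_u}$, hence pointwise a linear combination of Hamiltonian vectors; writing a general $D$ locally as $\sum df_i\otimes u_i$ finishes it.

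There is also a smaller gap in your treatment of part~(1). On the mixed pair $(l_v,p^*f)$ the obstruction is not zero ``since $N$ covers the identity'': one gets $X_D\{l_v,p^*f\}-\{X_Dl_v,p^*f\}=0-(-p^*(\rho(Nv)f))=p^*(\rho(Nv)f)$, so vanishing requires $\rho\circ N=0$. This is not automatic, but it \emph{does} follow once $N=D\circ\rho$ is a derivation of the bracket: applying the derivation identity to $[u,fv]$ and using $C^\infty$-linearity of $N$ forces $\rho(Nu)(f)\,v=0$ for all $u,v,f$, hence $\rho\circ N=0$. With that observation your computation on $(l_v,l_w)$ gives the equivalence in~(1), and your argument for~(2) is fine.
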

\begin{proof}
For any $u\in \Gamma(A)$,  it is direct to see that the fundamental vector field  $\mathrm{ad}(\mathbbm{d}u)=[u,\cdot]_A \in \mathcal{D} A\subset \mathfrak{X}(A^*)$ is the Hamiltonian vector field $X_{l_u}$.  For $df\otimes u\in \Gamma(T^*M\otimes A)$,  the fundamental vector field  is \[\mathrm{ad}(df\otimes u)=\mathrm{ad}(\mathbbm{d}(fu))-\mathrm{ad}(f\mathbbm{d}u)=X_{fl_u}-fX_{l_u}.\]
It is not a Hamiltonian vector field in general, but at a point it is
 a linear combination of two Hamiltonian vectors. As any $D\in \Gamma(\Hom(TM,A))$ at a point is a linear combination of this form, we conclude that the fundamental vector field of any section of $\mathcal{J}A$ 
at a point is a Hamiltonian vector. So the coadjoint representation is tangent to the symplectic leaves of $A^*$.

The fundamental vector field of  $D\in \Gamma(\Hom(TM,A))$ is $\mathrm{ad}(D)=-D\circ \rho\in \End(A^*)\subset \mathfrak{X}(A)$. It is a Poisson vector field on $A^*$ iff $[\mathrm{ad}(D),\pi]=0$ iff $D\circ \rho\in \mathrm{Der}(A)$. It is further a Hamiltonian vector field iff $D\circ \rho=0$.
\end{proof}
It is seen from the proof that $df\otimes u$ is a Poisson vector field iff $\rho^*(df)\wedge \ad_u\in \mathfrak{X}^2(A^*)$ vanishes and a Hamiltonian vector field iff $\rho^*(df)=0$ iff $p^*f$ is a Casimir function on $A^*$.



Since the adjoint representation of $\mathcal{J}\mathcal{G}$ on $A^*$ is linear, it preserves the zero section. For the particular case $A=TM$ ($M$ connected), the only simplectic leaf of $T^*M$ is the entire manifold. So the coadjoint orbits of $\mathcal{J}\mathcal{G}$ on $A^*$ are not exactly the symplectic leaves.
We shall use the vertical translations to extend the coadjoint orbits to symplectic leaves. This is the content of the following section.
\section{Coadjoint orbits and symplectic leaves}
It is well-known that a symplectic leaf for the Lie-Poisson structure on the dual of a Lie algebra is a coadjoint orbit of its Lie group. We expect a parallel result for Lie algebroids. In fact, we realize symplectic leaves on the dual of a Lie algebroid as affine coadjoint orbits.



Since $\mathcal{J}\mathcal{G}$ represents on $T^*M$, we have the semi-direct product Lie groupoid $\mathcal{J}\mathcal{G}\ltimes T^*M$ over $M$. As a manifold, it is the fiber product $\mathcal{J}\mathcal{G}\times_{M} T^*M$ with respect to the source map $r:\mathcal{J}\mathcal{G}\to M$ and the projection $p: T^*M\to M$. The groupoid structure is given by
\[r(j^1_x b, \gamma)=x,\quad \quad l(j^1_x b,\gamma)=\phi_b(x), \qquad \gamma\in T_x^*M,\]
\[(j^1_{\phi_{b_2}(x)} b_1, \gamma_1)\circ (j_x^1 b_2,\gamma_2)=(j_x^1(b_1\cdot b_2),  \gamma_2+(d\phi_{b_2})^{*}\gamma_1),\qquad \gamma_1\in T_{\phi_{b_2}(x)}^*M, \gamma_2\in T_x^* M.\]
The jet groupoid  $\mathcal{J}\mathcal{G}$ coadjoint represents on $A^*$ by (\ref{coadjoint}). We also notice that
the trivial Lie groupoid $T^*M$ ($l=r=p$)  acts  on $A^*$ by the vertical translation
\begin{eqnarray}\label{vertical}
\gamma\triangleright \alpha=\alpha-\rho^*(\gamma),\quad \quad \gamma\in T_x^*M, \alpha\in A^*_x.
\end{eqnarray}

\begin{Thm}\label{main2}
With the notations above, we have
\begin{itemize}
\item[(1)]
the Lie groupoid $\mathcal{J}\mathcal{G}\ltimes T^*M$ acts on $A^*$ by 
\begin{eqnarray}\label{affine}
(j_x^1 b,\gamma)\triangleright \alpha= \Ad_{j_x^1 b}^* \alpha-\rho^*(d\phi_{b}^{-1})^*\gamma,\qquad \gamma\in T^*_x M,\alpha\in A^*_x,
\end{eqnarray}
with the moment map the natural projection $A^*\to M$. We call this action the {\bf  affine coadjoint action} of $\mathcal{G}$ on $A^*$.
\item[(2)] the orbits of the affine coadjoint action are the symplectic leaves of the Lie-Poisson structure on $A^*$.  
\end{itemize}
\end{Thm}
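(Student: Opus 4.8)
The plan is to prove (1) by a direct verification of the groupoid action axioms, then to prove (2) in two inclusions: every orbit of the affine coadjoint action lies inside a symplectic leaf, and conversely every symplectic leaf is contained in a single such orbit. For part (1), I would check associativity and the unit law for the formula \eqref{affine} using that $\Ad^*$ is a representation of $\mathcal{J}\mathcal{G}$ (Proposition \ref{CSS} dualized), that the vertical translation \eqref{vertical} is an action of the trivial groupoid $T^*M$, and the semidirect-product multiplication rule in $\mathcal{J}\mathcal{G}\ltimes T^*M$; the cocycle term $-\rho^*(d\phi_b^{-1})^*\gamma$ is exactly what is needed to make the two pieces interact correctly, and the equivariance \eqref{compatible} of the anchor is what guarantees the cross-terms cancel. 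This is a routine but slightly lengthy computation, so I would only indicate the key cancellation.

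For the forward inclusion in (2), I would differentiate the action. The Lie algebroid of $\mathcal{J}\mathcal{G}\ltimes T^*M$ is $\mathcal{J}A\ltimes T^*M$, and its infinitesimal action on $A^*$ is obtained by combining the infinitesimal coadjoint action of $\mathcal{J}A$ from Lemma \ref{infinitesimal actions} with the infinitesimal vertical translation $\gamma\triangleright\alpha = -i_\alpha(\cdots)$ coming from \eqref{vertical}; concretely the fundamental vector fields are spanned by $\mathrm{ad}(\mathbbm{d}u) = [u,\cdot]_A = X_{l_u}$, by $\mathrm{ad}(D) = i_\alpha\rho^*(D)$, and by the vertical vector fields $\alpha\mapsto -\rho^*(\gamma)$ for $\gamma\in\Omega^1(M)$. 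By Proposition \ref{symp1} the first two families are already tangent to the symplectic foliation (each is pointwise a Hamiltonian vector, being a linear combination of $X_{l_u}$ and $X_{fl_u}$). For the vertical family, note $-\rho^*(df) = -\rho^*(df) = X_{p^*f}$ at a point, since $\{p^*f, l_u\} = -p^*(\rho(u)f) = -\langle\rho^*(df),u\rangle$ read off fiberwise shows $X_{p^*f}$ is precisely the vertical vector field $\alpha\mapsto-\rho^*(df)$; as every element of $T^*_xM$ is a linear combination of such $df(x)$, all the infinitesimal generators are tangent to the symplectic leaves, so each orbit of the (source-connected) groupoid $\mathcal{J}\mathcal{G}\ltimes T^*M$ is contained in a leaf.

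For the reverse inclusion — and this is the main obstacle — I must show the orbit is not merely contained in a leaf but equals it, i.e.\ the infinitesimal action is transitive on each leaf, or equivalently the fundamental vector fields at each $\alpha\in A^*$ span the whole tangent space to the symplectic leaf through $\alpha$. The tangent space to the leaf at $\alpha$ is the image of the anchor $\pi^\sharp: T^*_\alpha A^* \to T_\alpha A^*$ of the Lie–Poisson structure, and the symplectic distribution is spanned by Hamiltonian vector fields of all functions on $A^*$; since functions on $A^*$ are generated by $l_u$ ($u\in\Gamma(A)$) and $p^*f$ ($f\in C^\infty(M)$), their Hamiltonian vector fields are spanned by the $X_{l_u}$, the $X_{p^*f}$, and — by the Leibniz rule — products of the form $X_{fl_u} = f X_{l_u} + l_u X_{p^*f}$; these are exactly the fundamental vector fields of $\mathbbm{d}u$, of the vertical generators $df$, and of $df\otimes u = \mathbbm{d}(fu) - f\mathbbm{d}u$ in $\mathcal{J}A\ltimes T^*M$. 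Hence the fundamental vector fields span the symplectic distribution pointwise, so each integral leaf of the action-distribution coincides with a symplectic leaf. Finally, since the symplectic leaves are the maximal integral submanifolds of an integrable (singular) distribution and the orbits of a Lie groupoid with connected source fibers are likewise the maximal integral leaves of the distribution generated by the fundamental vector fields, equality of the distributions forces equality of the leaves. The point requiring care here is the passage from "spanning the distribution pointwise" to "same leaves," which uses the Stefan–Sussmann integrability of the Poisson distribution together with the source-connectedness of $\mathcal{G}$ (hence of $\mathcal{J}\mathcal{G}$ and of $\mathcal{J}\mathcal{G}\ltimes T^*M$); I would cite the standard fact that a symplectic leaf is generated by flows of Hamiltonian vector fields and match those flows with the groupoid action.
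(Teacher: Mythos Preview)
Your proposal is correct and follows essentially the same route as the paper: for (1) a direct associativity check using the representation property of $\Ad^*$ and the anchor equivariance \eqref{compatible}, and for (2) the identification, pointwise, of the span of fundamental vector fields of $\mathcal{J}A\ltimes T^*M$ with the span of Hamiltonian vector fields of the generators $l_u$, $p^*f$ (and their products), together with connectedness of orbits to pass from equal distributions to equal leaves. The paper presents (2) as a single computation rather than two inclusions, but the content is the same; your version is slightly more explicit about the Stefan--Sussmann step and the role of source-connectedness, which the paper leaves implicit.
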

\begin{proof}
First we check $A^*$ is a $\mathcal{J}\mathcal{G}\ltimes T^*M$-space. For $\gamma_2\in T^*_x M$ and $\gamma_1\in T_{\phi_{b_2}(x)}^* M$, we have 
\begin{eqnarray*}
(j^1_{\phi_{b_2}(x)} b_1, \gamma_1)\triangleright\big( (j_x^1 b_2,\gamma_2)\triangleright \alpha\big)&=&(j^1_{\phi_{b_2}(x)} b_1, \gamma_1)\triangleright (\Ad_{j_x^1 b_2}^* \alpha-\rho^*(d\phi_{b_2}^{-1})^* \gamma_2)\\ &=&\Ad^*_{j^1_{\phi_{b_2}(x)} b_1} \Ad^*_{j^1_x b_2} \alpha-\Ad^*_{j^1_{\phi_{b_2}(x)} b_1} \rho^*(d\phi_{b_2}^{-1})^*\gamma_2-\rho^*(d\phi_{b_1}^{-1})^* \gamma_1\\ &=&\Ad_{j_x^1(b_1\cdot b_2)}^* \alpha-\rho^*(d\phi_{b_1}^{-1})^*(d\phi_{b_2}^{-1})^* \gamma_2-\rho^*(d\phi_{b_1}^{-1})^{*}\gamma_1\\ &=&
(j_x^1(b_1\cdot b_2),  \gamma_2+(d\phi_{b_2})^{*}\gamma_1)\triangleright \alpha\\ &=&
\big((j^1_{\phi_{b_2}(x)} b_1, \gamma_1)\circ (j_x^1 b_2,\gamma_2)\big)\triangleright \alpha.
\end{eqnarray*}
Here we have used  the equation \eqref{compatible} and the fact that $\rho^*(d\phi_{b_1\cdot b_2}^{-1})^*(d\phi_{b_2})^*\gamma_1=\rho^*(d\phi_{b_1}^{-1})^* \gamma_1$.

For $(2)$, we show that the vector space of fundamental vector fields of the affine coadjoint action of $\mathcal{J}\mathcal{G}\ltimes T^*M$ on $A^*$ at $\alpha\in A^*_x$ coincides with the vector space of all Hamiltonian vector fields of $A^*$ at $\alpha$.

For $u\in \Gamma(A)$ and $f\in C^\infty(M)$, the Hamiltonian vector fields of $l_u, p^*f\in C^\infty(A^*)$ are respectively
\[ X_{l_u}=\ad_u=[u,\cdot]_A\in \mathcal{D}(A),\quad \quad X_{p^*f}=-\rho^*(df)\in \Gamma(A^*).\]
And the fundamental vector fields are 
\[\widehat{\mathbbm{d}u}=\ad_u\in \mathcal{D}(A),\quad \quad \widehat{df\otimes u}=
\ad_{fu}-f\ad_u\in \End(A),\quad \quad \widehat{\gamma}=-\rho^*\gamma\in \Gamma(A^*),\qquad \gamma\in \Omega^1(M).\]
Locally, any $1$-form $\gamma\in \Omega^1(M)$ can be written as $\gamma(x)=\sum_{i=1}^{\mathrm{dim}(M)} \gamma^i(x) dx^i$, the summation of exact forms at this point. Also note that $X_{fg}=gX_f+fX_g$. We obtain that the vector spaces of fundamental vector fields and Hamiltonian vector fields are the same at a point.
Since the affine coadjoint orbits of $\mathcal{J}\mathcal{G}\ltimes T^*M$ are connected, we get that the symplectic leaves of the Lie-Poisson structure on $A^*$ are exactly the affine coadjoint orbits.  
\end{proof}

From the above proof, one see the space of fundamental vector fields of $\Gamma(A)$ and $\Omega^1(M)$ already coincides with the space of Hamiltonian vector fields on $A^*$ at a point. This means that the integrable leaves generated by the actions of $\Gamma(A)$ and $\Omega^1(M)$ are the symplectic leaves. Here  in order to realize the symplectic leaves as orbits of an action,  we also take $\Gamma(T^*M\otimes A)$ into consideration.

We have proved that the symplectic leaves passing a point $\alpha\in A_x^*$ is 
\begin{eqnarray}\label{sym leaf}
\mathcal{O}_\alpha=\{\Ad_{j_x^1 b}^* \alpha+\rho^*(d\phi_{b}^{-1})^*(\gamma);\qquad \forall \gamma\in T_x^* M, b\in \mathrm{LBis_x(\mathcal{G})}\}.
\end{eqnarray}
\begin{Cor}\label{3.2}
The projection of the symplectic leaf $\mathcal{O}_\alpha$ for $\alpha\in A_x^*$ to $M$ is the groupoid orbit $\mathcal{L}_x$ in $\mathcal{G}$ passing $x$.
\end{Cor}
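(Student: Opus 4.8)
The plan is to read everything off from the explicit description \eqref{sym leaf} of the symplectic leaf through $\alpha$. First I would observe that both terms in that formula lie in a single fibre of $A^*$: $\Ad_{j_x^1 b}^*$ sends $A_x^*$ into $A_{\phi_b(x)}^*$, while $(d\phi_b^{-1})^*$ sends $T_x^*M$ into $T_{\phi_b(x)}^*M$, so $\rho^*(d\phi_b^{-1})^*\gamma\in A^*_{\phi_b(x)}$ as well. Hence the projection $p\colon A^*\to M$ collapses the part of \eqref{sym leaf} indexed by a fixed $b$ to the single point $\phi_b(x)=l(b(x))$, and
\[
p(\mathcal{O}_\alpha)=\{\,l(b(x))\ :\ b\in\mathrm{LBis}_x(\mathcal{G})\,\}.
\]
So the corollary reduces to the identity $\{\,l(b(x))\ :\ b\in\mathrm{LBis}_x(\mathcal{G})\,\}=\mathcal{L}_x$, where $\mathcal{L}_x=l(r^{-1}(x))$ is the $\mathcal{G}$-orbit of $x$.

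One inclusion is immediate: any $b\in\mathrm{LBis}_x(\mathcal{G})$ satisfies $r(b(x))=x$, so $b(x)\in r^{-1}(x)$ and $l(b(x))\in\mathcal{L}_x$. For the reverse inclusion I would take $y\in\mathcal{L}_x$, choose an arrow $g\in r^{-1}(x)$ with $l(g)=y$, and produce a local bisection $b$ with $b(x)=g$; then automatically $\phi_b(x)=l(b(x))=l(g)=y$, so $y\in p(\mathcal{O}_\alpha)$.

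The only step that requires an argument is the existence of a local bisection through an arbitrary arrow $g$, which is a standard fact about Lie groupoids (see \cite{Mac}); I would prove it as follows. Since $\ker(dr)_g$ and $\ker(dl)_g$ are two subspaces of $T_g\mathcal{G}$ of the common dimension $\dim\mathcal{G}-\dim M$, they admit a common linear complement. Choosing a submanifold $\Sigma\subseteq\mathcal{G}$ with $g\in\Sigma$, $\dim\Sigma=\dim M$ and $T_g\Sigma$ equal to such a complement, the restrictions $r|_\Sigma$ and $l|_\Sigma$ become diffeomorphisms onto open neighbourhoods of $x$ and $y$ after shrinking $\Sigma$. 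Then $b:=(r|_\Sigma)^{-1}$ is a local section of $r$ with $b(x)=g$, and $\phi_b=l|_\Sigma\circ(r|_\Sigma)^{-1}$ is a composite of diffeomorphisms, so $b$ is a genuine local bisection. This closes the second inclusion and gives $p(\mathcal{O}_\alpha)=\mathcal{L}_x$; since $\mathcal{G}$ is $r$-connected, $r^{-1}(x)$ and hence $\mathcal{L}_x$ is connected, consistent with $\mathcal{O}_\alpha$ being a single symplectic leaf. I do not expect any real obstacle: the substantive content — that \eqref{sym leaf} describes the symplectic leaf — was already established in Theorem~\ref{main2}, and what remains is this bookkeeping together with the classical local-bisection lemma.
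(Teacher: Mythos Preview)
Your proposal is correct and follows essentially the same route as the paper: reduce to the equality $\{\phi_b(x):b\in\mathrm{LBis}_x(\mathcal{G})\}=\mathcal{L}_x$, then check both inclusions using the existence of a local bisection through any arrow. You simply spell out two points the paper leaves implicit --- why the projection of \eqref{sym leaf} is exactly $\{\phi_b(x)\}$, and why a local bisection through a given $g$ exists --- so your write-up is a strict expansion of the paper's argument rather than a different one.
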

\begin{proof}
We shall check $\mathcal{L}_x=\{\phi_b(x),  b\in \mathrm{LBis_x}(\mathcal{G})\}$. For any local bisection $b$, we have $\phi_b(x)=l(b(x))\in l(r^{-1}(x))=\mathcal{L}_x$. Conversely for any $y\in \mathcal{L}_x$, there exists $g\in \mathcal{G}$ such that $l(g)=y$ and $r(g)=x$. Choosing a local bisection $b$ passing $g$, we get $\phi_{b}(x)=y$. 
\end{proof}
So the symplectic leaf $\mathcal{O}_\alpha$ through $\alpha\in A_x^*$ only depends on the Lie algebroid $A$ restricting on $\mathcal{L}_x$.

\begin{Cor}
The Lie-Poisson structure on $A^*$ is symplectic iff $A^*$ is isomorphic to $T^*M$ with the canonical symplectic structure.
\end{Cor}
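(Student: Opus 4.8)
The plan is to prove both implications directly. The direction ($\Leftarrow$) is immediate: if $A^* \cong T^*M$ as a vector bundle in a way that intertwines the Lie-Poisson structure with the canonical symplectic structure on $T^*M$, then the Lie-Poisson structure is symplectic by definition, since being symplectic is just the statement that the whole manifold is a single symplectic leaf (equivalently, that the Poisson bivector is nondegenerate everywhere).

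For the direction ($\Rightarrow$), suppose the Lie-Poisson structure on $A^*$ is symplectic, i.e.\ $A^*$ itself is the unique symplectic leaf. First I would observe that nondegeneracy of the Poisson bivector forces the rank to be maximal at every point, in particular at the zero section $0_x \in A_x^*$. The key is to examine the symplectic leaf $\mathcal{O}_{0_x}$ through a point of the zero section: by \eqref{sym leaf} it is
\[
\mathcal{O}_{0_x} = \{\Ad_{j_x^1 b}^* 0_x + \rho^*(d\phi_b^{-1})^*\gamma ;\ \gamma \in T_x^*M,\ b \in \mathrm{LBis}_x(\mathcal{G})\} = \{\rho^*(d\phi_b^{-1})^*\gamma\},
\]
since the coadjoint action is linear and fixes the zero section. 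Because $\mathcal{O}_{0_x} = A^*$ by hypothesis, running over all $\gamma$ and $b$ must sweep out all of $A^*$; in particular $\rho^* : T^*M \to A^*$ must already be surjective on a neighborhood (the $d\phi_b^{-1}$ twist is an isomorphism on each fiber), hence $\rho^*$ is fiberwise surjective, which is equivalent to $\rho : A \to TM$ being fiberwise injective. Combined with a dimension count — symplectic forces $\dim A^* = 2\dim M$ would be wrong, so instead I should argue that the symplectic leaf through $0_x$ is all of $A_x^*$ locally, forcing $\dim A_x = \dim T_x^*M = \dim M$ and $\rho_x$ injective, hence $\rho_x$ an isomorphism. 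So $\rho : A \to TM$ is a vector bundle isomorphism, and therefore a Lie algebroid isomorphism (an isomorphism of vector bundles over the identity that is the anchor is automatically bracket-preserving, as the bracket on $TM$ is determined by the anchor being an isomorphism). Dualizing, $A^* \cong T^*M$, and under this identification the Lie-Poisson bracket on $A^*$ is carried to the Lie-Poisson bracket of $TM$ on $T^*M$, which is precisely the canonical one.

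The main obstacle is making the step ``$\mathcal{O}_{0_x} = A^*$ implies $\rho_x$ is an isomorphism'' fully rigorous: one must be careful that $\mathcal{O}_{0_x}$ as described in \eqref{sym leaf} is the orbit of the \emph{whole} groupoid $\mathcal{J}\mathcal{G} \ltimes T^*M$ (so it a priori only lies over the leaf $\mathcal{L}_x$), and to conclude $\rho_x$ surjective-on-duals one should restrict attention to the fiber over $x$ itself — i.e.\ to elements with $\phi_b(x) = x$ — where the orbit within $A_x^*$ is exactly $\{(d\phi_b^{-1})^* $ acting on $\rho^*(\gamma)\}$, a linear subspace equal to $\rho_x^*(T_x^*M)$ up to automorphism. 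Since the Poisson structure is symplectic this must be all of $A_x^*$, giving $\rho_x^*$ surjective and hence $\rho_x$ injective; the reverse inequality $\dim A_x \le \dim M$ then follows because the symplectic leaf (all of $A_x^*$) has even dimension equal to $2\dim M$ near the zero section only if... — here one invokes that the generic rank of the Lie-Poisson structure on $A^*$ equals $\dim M + \operatorname{rk}(\rho)$, so symplectic (rank $= \dim A^* = \dim M + \operatorname{rk} A$) forces $\operatorname{rk}\rho = \operatorname{rk} A$, i.e.\ $\rho$ injective, and then surjectivity of $\rho^*$ forces equality of ranks, so $\rho$ is an isomorphism. I would present this rank computation as the technical heart of the argument and keep the rest brief.
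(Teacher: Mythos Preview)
The paper states this corollary without an explicit proof, so there is nothing to compare against line by line; the intended argument is the immediate one coming from Theorem~\ref{main2} and Corollary~\ref{3.2}. Your approach via the zero section is exactly the right idea and is essentially that intended argument: since the coadjoint action is linear, the orbit through $0_x$ reduces to $\mathcal{O}_{0_x}=\{\rho^*((d\phi_b^{-1})^*\gamma)\}$, and if this is all of $A^*$ one reads off that $\rho$ is an isomorphism.

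Where your write-up goes astray is the second half. After correctly obtaining that $\rho_x^*$ is surjective (hence $\rho_x$ injective), you launch into a dimension count, then retract it, then restrict to the fiber over $x$, then invoke a rank formula ``generic rank $=\dim M+\operatorname{rk}\rho$'' that you neither prove nor actually need. None of this is necessary, and the rank formula as stated is not correct at the zero section (there the rank is $2\operatorname{rk}\rho_x$, not $\dim M+\operatorname{rk}\rho_x$). The clean way to finish is already contained in what you wrote earlier but did not exploit: the description $\mathcal{O}_{0_x}\subset \rho^*(T^*M)|_{\mathcal{L}_x}$ together with $\mathcal{O}_{0_x}=A^*$ forces simultaneously (i) $\mathcal{L}_x=M$, i.e.\ $\rho$ is surjective (this is exactly Corollary~\ref{3.2}: the projection of a leaf to $M$ is the groupoid orbit), and (ii) $\rho^*_y$ is onto for every $y$, i.e.\ $\rho$ is injective. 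Hence $\rho:A\to TM$ is a vector bundle isomorphism, and since the anchor is always a Lie algebroid morphism it is a Lie algebroid isomorphism; dualizing gives $A^*\cong T^*M$ with the canonical structure. Drop the rank computation and the self-corrections, and present just this two-line conclusion.
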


\begin{Rm}
Here we consider the symplectic leaves of the Lie-Poisson structure on $A^*$, which is based on the Weinstein splitting theorem for Poisson manifolds. The splitting theorem for  a  Lie algebroid $A$  is also well-studied \cite{F, D}, which is not the Weinstein splitting  theorem for the Lie-Poisson structure on $A^*$ as mentioned by Fernandes in \cite{F}. The reason is that for the Lie algebroid case changes of coordinates of $A^*$ are only allowed to be linear in the fiber variables.
\end{Rm}

As $\mathcal{J}A$ represents on $TM$, we get the semi-direct product Lie algebroid $\mathcal{J} A\ltimes T^*M$ with the bracket between $\Gamma(\mathcal{J}A)$ and $\Omega^1(M)$ given by the representation (\ref{dual1}). 

\begin{Pro}
The Lie algebroid of the Lie groupoid $\mathcal{J}\mathcal{G}\ltimes T^*M$ is $\mathcal{J}A \ltimes T^*M$ and the  infinitesimal of the affine coadjoint action of $\mathcal{J}\mathcal{G}\ltimes T^*M$ on $A^*$ is the Lie algebra homomorphism
\begin{equation}\label{affine action}
\mathrm{ad}: \Gamma(\mathcal{J}A \ltimes T^*M)\to \mathfrak{X}(A^*),\quad \quad \mathrm{ad}(\mathbbm{d}u+D+\gamma)=[u,\cdot]_A-\rho^*(D)-\rho^*(\gamma),
\end{equation}
for $u\in \Gamma(A), D\in \Gamma(\Hom(TM,A))$ and $\gamma\in \Omega^1(M)$. 
\end{Pro}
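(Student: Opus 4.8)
For both parts the approach is to apply the Lie functor to structure already built in the excerpt. For the identification of the Lie algebroid, the plan is to use the groupoid counterpart of $\Lie(G\ltimes V)=\mathfrak g\ltimes V$: for a representation $E$ of a Lie groupoid $\mathcal H$, the semidirect product $\mathcal H\ltimes E$ has Lie algebroid $\Lie(\mathcal H)\ltimes E$ built from the infinitesimal representation. Here $\mathcal H=\mathcal{J}\mathcal{G}$, with $\Lie(\mathcal{J}\mathcal{G})=\mathcal{J}A$ (as noted after Proposition \ref{omni-Lie algebroid}), $E=T^*M$, and the infinitesimal representation is $(\ref{dual1})$. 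Concretely one checks this directly: the source fibre of $\mathcal{J}\mathcal{G}\ltimes T^*M$ over $x$ is $\{(j^1_x b,\gamma):\gamma\in T^*_x M\}$, so its Lie algebroid has underlying bundle $\mathcal{J}A\oplus T^*M$; since $l(j^1_x b,\gamma)=\phi_b(x)$ does not involve $\gamma$, the anchor factors through $\mathcal{J}A$ and equals $\rho_{\mathcal{J}A}$; and the Lie bracket of sections, computed by flowing along source fibres, recovers the jet-algebroid bracket of Proposition \ref{omni-Lie algebroid} on $\Gamma(\mathcal{J}A)$, the vanishing bracket on $\Omega^1(M)$ (an abelian ideal, since the $\mathcal{J}\mathcal{G}$-action on $T^*M$ is linear), and, on the mixed terms, exactly $(\ref{dual1})$. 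Hence $\Lie(\mathcal{J}\mathcal{G}\ltimes T^*M)=\mathcal{J}A\ltimes T^*M$.

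For the infinitesimal action, I would exploit the factorisation $(j^1_x b,\gamma)=(j^1_x b,0)\circ(1_x,\gamma)$ in $\mathcal{J}\mathcal{G}\ltimes T^*M$, which exhibits the affine coadjoint action $(\ref{affine})$ as the composite of the coadjoint action of $\mathcal{J}\mathcal{G}$ on $A^*$ and the vertical translation $(\ref{vertical})$ of $T^*M$. Differentiating a groupoid action on a vector bundle is additive in its Lie-algebroid argument (cf.\ the proof of Lemma \ref{infinitesimal actions}), so it is enough to evaluate $\mathrm{ad}$ separately on $\mathbbm{d}u$, on $D\in\Gamma(\Hom(TM,A))$, and on $\gamma\in\Omega^1(M)$. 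The first two cases are precisely the coadjoint infinitesimal representation of Lemma \ref{infinitesimal actions}, i.e.\ the fundamental vector fields $[u,\cdot]_A\in\mathcal D(A)$ and $-\rho^*(D)$ on $A^*$ already recorded in the proofs of Proposition \ref{symp1} and Theorem \ref{main2}. In the third case, $\gamma\triangleright\alpha=\alpha-\rho^*(\gamma)$ is affine along the fibres of $T^*M$, so the curve $t\mapsto\alpha-t\,\rho^*(\gamma_x)$ differentiates at $t=0$ to the vertical vector field $-\rho^*(\gamma)$ on $A^*$. Adding the three contributions yields $(\ref{affine action})$; that $\mathrm{ad}$ is a Lie algebra homomorphism is then automatic, being the infinitesimal of the genuine groupoid action of Theorem \ref{main2}(1), though one can also confirm it directly from Proposition \ref{omni-Lie algebroid} and $(\ref{dual1})$.

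The only point requiring care — there is nothing deep here — is matching conventions: one must check that the bracket produced by the Lie functor on $\mathcal{J}\mathcal{G}\ltimes T^*M$, in particular its cross term between $\Gamma(\mathcal{J}A)$ and $\Omega^1(M)$, coincides with the semidirect-product bracket built from $(\ref{dual1})$, which one either quotes from the general theory of semidirect products of Lie groupoids or extracts from the flow computation above; and one must keep track of the fact that $(\ref{affine action})$ is phrased in terms of fundamental vector fields, which differ in sign from the operators $\triangleright$ of Lemma \ref{infinitesimal actions} according to the left-action convention used in Theorem \ref{main2}.
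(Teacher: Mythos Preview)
Your proposal is correct and follows essentially the same route as the paper: the paper's proof reduces, via Lemma \ref{infinitesimal actions}, the $\mathcal{J}A$-summands to $[u,\cdot]_A$ and $-\rho^*(D)$, and for $\gamma\in\Omega^1(M)$ it differentiates the bisection $x\mapsto(j^1_x i,\,t\gamma_x)$ to obtain $-\rho^*(\gamma)$, which is exactly your curve $t\mapsto\alpha-t\rho^*(\gamma_x)$. Your treatment of the first claim (that $\Lie(\mathcal{J}\mathcal{G}\ltimes T^*M)=\mathcal{J}A\ltimes T^*M$) is actually more detailed than the paper's, which essentially asserts it; otherwise the arguments coincide.
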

\begin{proof}
For $\gamma\in \Omega^1(M)$, we have $x\mapsto (j^1_x i, t\gamma_x)\in \mathcal{J}\mathcal{G}\ltimes T^*M$, where $i:M\hookrightarrow G$ is the inclusion,  is a bisection integrating $\gamma$. Then the induced infinitesimal action of $T^*M $ on $A^*$ is 
\[\ad(\gamma)(\alpha)=\frac{d}{dt}|_{t=0} (j^1_x i, t\gamma_x)\triangleright \alpha=-\rho^*(\gamma_x), \qquad \alpha\in A_x^*.\]
With Lemma \ref{infinitesimal actions}, we complete the proof.
\end{proof}


In the following, we shall realize the affine coadjoint action of $\mathcal{J}A\ltimes T^*M$ on $A^*$ as 
a restriction of the adjoint representation of an extended Lie algebroid on itself. 

Let $\tilde{A}:=A\ltimes (M\times \mathbb{R})$ denote the semi-direct product Lie algebroid  of the Lie algebroid $A$ with the action $u\vartriangleright f=\rho(u)f, \forall u\in \Gamma(A), f\in C^\infty(M)$. Consider the adjoint representation
\[\tilde{\mathrm{ad}}:\mathcal{J}\tilde{A}\to \mathcal{D} \tilde{A}\]
of $\mathcal{J}\tilde{A}$ on $\tilde{A}$ which is given by 
\begin{eqnarray}\label{linear action1}
\tilde{\mathrm{ad}}(\mathbbm{d}(u+f))(v+g)=[u+f,v+g]_{\tilde{A}}=[u,v]+\rho(u)g-\rho(v)f
\end{eqnarray}
and 
\begin{eqnarray}\label{linear action2}
\tilde{\mathrm{ad}}(D+\gamma)(v+g)=-\tilde{\rho}^*(D+\gamma)(v+g)=-D(\rho(v))-\gamma(\rho(v)),
\end{eqnarray}
for $u,v\in \Gamma(A),f,g\in C^\infty(M), D\in \Gamma(\Hom(TM,A))$ and $\gamma\in \Omega^1(M)$.


\begin{Pro}
\begin{itemize}
\item[(1)] We have that $\mathcal{J}\tilde{A}=(\mathcal{J} A\ltimes T^*M)\bowtie (M\times \mathbb{R})$ is a matched pair of Lie algebroids, that is, $\mathcal{J}A\ltimes T^*M$ and $M\times \mathbbm{R}$ are  sub-algebroids of $\mathcal{J}\tilde{A}$ and the mixed bracket is 
\[[\mathbbm{d}u+D+\gamma, f]=\rho(u) f+i_{df} \rho(D).\]
\item[(2)] The affine action $\pi$ of $\mathcal{J}A\ltimes T^*M$ on $A^*$ is the restriction of the adjoint representation $\tilde{\ad}$   to $\mathcal{J} A\ltimes T^*M\subset \mathcal{J} \tilde{A}$ on $A^*\subset \tilde{A^*}$.
\end{itemize}
\end{Pro}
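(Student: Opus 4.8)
The plan is to establish (1) by producing the vector--bundle splitting $\mathcal{J}\tilde A\cong(\mathcal{J}A\ltimes T^*M)\oplus(M\times\mathbb{R})$, checking that each summand is a Lie subalgebroid, and computing the mixed bracket; and to establish (2) by dualizing $\tilde A$, realizing $A^*$ as the affine ``height $1$'' slice of $\tilde A^*$, and identifying the restricted coadjoint action of $\mathcal{J}\tilde A$ with the affine action (\ref{affine action}).

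Since $\tilde A=A\oplus(M\times\mathbb{R})$ as a bundle and the first--jet functor respects direct sums, $\mathcal{J}\tilde A\cong\mathcal{J}A\oplus\mathcal{J}(M\times\mathbb{R})$; combining this with the canonical splitting $\mathcal{J}(M\times\mathbb{R})\cong(M\times\mathbb{R})\oplus T^*M$, $\mathbbm{d}f\leftrightarrow(f,df)$, in which $T^*M=\Hom(TM,M\times\mathbb{R})$, and regrouping the $T^*M$--summand with $\mathcal{J}A$, yields the decomposition. On sections $\Gamma(\mathcal{J}\tilde A)\cong\Gamma(\mathcal{J}A)\oplus\Omega^1(M)\oplus C^\infty(M)$, a section being written $\mathbbm{d}u+D+\gamma+f$, with the conventions that ``$\gamma$'' stands for $\gamma\otimes1\in\Gamma(T^*M\otimes(M\times\mathbb{R}))$ and ``$f$'' for the jet $\mathbbm{d}f$ of $f\in C^\infty(M)$ regarded as a section of $M\times\mathbb{R}\subset\tilde A$. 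The inclusions $A\hookrightarrow\tilde A$ and $M\times\mathbb{R}\hookrightarrow\tilde A$ are morphisms of Lie algebroids, hence prolong to subalgebroid inclusions into $\mathcal{J}\tilde A$; the second image is abelian with zero anchor since $[\mathbbm{d}f,\mathbbm{d}g]_{\mathcal{J}\tilde A}=\mathbbm{d}[f,g]_{\tilde A}=0$ and $\rho_{\mathcal{J}\tilde A}(\mathbbm{d}f)=\tilde\rho(f)=0$. That adjoining the $T^*M$--summand to $\mathcal{J}A$ again gives a subalgebroid is checked by applying the bracket formulas of Proposition~\ref{omni-Lie algebroid} to $\tilde A$ (using $[u,v]_{\tilde A}=[u,v]_A$ and $\tilde\rho|_A=\rho$): one gets $[\mathbbm{d}u,\gamma\otimes1]=(\mathcal{L}_{\rho(u)}\gamma)\otimes1$, $[D,\gamma\otimes1]=(i_\gamma\rho(D))\otimes1$ and $[\gamma\otimes1,\gamma'\otimes1]=0$, so the subbundle closes under bracket with restricted anchor $\mathbbm{d}u+D+\gamma\mapsto\rho(u)$, and these are exactly the structure maps of $\mathcal{J}A\ltimes T^*M$ for the action (\ref{dual1}). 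Finally the mixed bracket: by Leibniz and the same formulas, $[\mathbbm{d}u,\mathbbm{d}f]=\mathbbm{d}[u,f]_{\tilde A}=\mathbbm{d}(\rho(u)f)$ gives the $M\times\mathbb{R}$--component $\rho(u)f$, the bracket $[D,\mathbbm{d}f]$ computes to $(i_{df}\rho(D))\otimes1$, giving the $T^*M$--component $i_{df}\rho(D)$, and $[\gamma\otimes1,\mathbbm{d}f]=0$; hence $[\mathbbm{d}u+D+\gamma,f]=\rho(u)f+i_{df}\rho(D)$. As the two complementary pieces close under bracket and, together with this mixed bracket, recover the full bracket of $\mathcal{J}\tilde A$, this is the matched pair.

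For (2), dualize $\tilde A=A\oplus(M\times\mathbb{R})$ to $\tilde A^*=A^*\oplus(M\times\mathbb{R})$ and take $A^*\subset\tilde A^*$ to be the affine subbundle $\{(\alpha,1):\alpha\in A^*\}$ of covectors of height $1$ in the $(M\times\mathbb{R})^*$--direction. The adjoint representation $\tilde{\mathrm{ad}}$ of $\mathcal{J}\tilde A$ on $\tilde A$ dualizes to a coadjoint representation $\tilde{\mathrm{ad}}^{\,*}$ on $\tilde A^*$, which I would restrict to $\mathcal{J}A\ltimes T^*M\subset\mathcal{J}\tilde A$. From (\ref{linear action1})--(\ref{linear action2}) one reads that $\tilde{\mathrm{ad}}(\mathbbm{d}u+D+\gamma)$ preserves the subbundle $M\times\mathbb{R}\subset\tilde A$ and acts on it only by the anchor action $g\mapsto\rho(u)g$; consequently the restricted coadjoint action fixes the height coordinate and preserves the slice $\{(\alpha,1)\}$. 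Restricting to that slice and unwinding with Lemma~\ref{infinitesimal actions}: the $\mathbbm{d}u$--part acts on the $A^*$--factor by the infinitesimal coadjoint action $\alpha\mapsto\mathcal{L}_u\alpha$, the $D$--part by $\alpha\mapsto i_\alpha\rho^*(D)$ (equivalently, $-\rho^*(D)$ as a vertical linear vector field on $A^*$), and the $\gamma$--part, since $\tilde{\mathrm{ad}}(\gamma)\colon\tilde A\to\tilde A$ is the off--diagonal map $v\mapsto-\langle\rho^*\gamma,v\rangle\in M\times\mathbb{R}$, contributes on the height--$1$ slice exactly the constant vertical translation by $-\rho^*\gamma$. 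Comparing with (\ref{affine action}), this is $\mathrm{ad}(\mathbbm{d}u+D+\gamma)=[u,\cdot]_A-\rho^*(D)-\rho^*(\gamma)$, i.e.\ the infinitesimal affine coadjoint action on $A^*$, as claimed.

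The routine work is the repeated bookkeeping with the bracket formulas of Proposition~\ref{omni-Lie algebroid} and with the three successive identifications. The main obstacle I anticipate is in (2): the vertical--translation summand $-\rho^*\gamma$ of the affine action is invisible on the linear subspace $A^*\times\{0\}\subset\tilde A^*$ and appears only after restricting the linear coadjoint action $\tilde{\mathrm{ad}}^{\,*}$ of $\mathcal{J}\tilde A$ to the affine height--$1$ slice --- it is precisely the $(M\times\mathbb{R})^*$--to--$A^*$ coupling term of $\tilde{\mathrm{ad}}^{\,*}$ --- so one must keep the sign conventions of Lemma~\ref{infinitesimal actions} and (\ref{linear action1})--(\ref{linear action2}) consistent with those behind (\ref{affine action}); the mixed--bracket computation in (1) also requires care, as it combines several of the bracket formulas of Proposition~\ref{omni-Lie algebroid}.
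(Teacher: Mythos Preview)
Your proposal is correct and follows the same approach as the paper's proof: for (1) you apply the bracket formulas of Proposition~\ref{omni-Lie algebroid} to $\tilde A$ to verify the subalgebroid and mixed-bracket claims, and for (2) you compare with (\ref{affine action}) via (\ref{linear action1})--(\ref{linear action2}), exactly as the paper does. Your explicit identification of $A^*$ with the affine height-$1$ slice of $\tilde A^*$ is the right way to make precise the paper's terse ``compare formulas'' argument for (2), since it explains how the linear coadjoint action of $\mathcal{J}\tilde A$ on $\tilde A^*$ produces the translation term $-\rho^*\gamma$; the paper leaves this identification implicit.
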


\begin{proof}
As vector bundles, we have $\mathcal{J}\tilde{A}=\mathcal{J}A \oplus (T^*M\oplus M\times \mathbbm{R})$. By Proposition \ref{omni-Lie algebroid},  we have
\[[\mathbbm{d}u+D,\gamma+f]=L_{\rho(u)} \gamma+\rho(u)f+i_\gamma \rho(D)+i_{df} \rho(D).
\]
Comparing with the equation (\ref{dual1}), we see that $\mathcal{J} A\ltimes T^*M$ is a Lie sub-algebroid of $\mathcal{J}\tilde{A}$ and also the other results.
(2) follows by comparing the formula (\ref{affine action})  with (\ref{linear action1}) and (\ref{linear action2}).

\end{proof}
\begin{Rm}
This description justifies the minus sign we put in (\ref{vertical}) and then in $\ad(\gamma)=-\rho^*(\gamma)$ for $\gamma\in \Omega^1(M)$ of (\ref{affine action}). While $\ad(D)=-\rho^*(D)$ is forced by $\pi(\mathbbm{d}u)=[u,\cdot]_A$ for $u\in \Gamma(A)$.
\end{Rm}


For the affine coadjoint action of $\mathcal{J} \mathcal{G}\ltimes T^*M$ on $A^*$ defined by (\ref{affine}),
we have known from Proposition \ref{symp1} that the first part $\mathcal{J} \mathcal{G}$ does not preserve the Lie-Poisson structure on $A^*$. For the $T^*M$ part,
we see from (\ref{vertical}) that the trivial Lie groupoid $T^*M$ acts on $A^*$ with the infinitesimal  $\mathrm{ad}: \Omega^1(M)\to \mathfrak{X}(A^*), \ad(\gamma)=-\rho^*(\gamma)$. We shall study when the fundamental vector fields are Poisson and Hamiltonian vector fields.
\begin{Pro}\label{symp2}
For $\gamma\in \Omega^1(M)$, its fundamental vector field $\ad(\gamma)$ is a Poisson vector field iff $\rho^*(d\gamma)\in \Gamma(\wedge^2 A^*)$ vanishes; it is a Hamiltonian vector field iff $\rho^*(\gamma)=\rho^*(df)$ for some $f\in C^\infty(M)$.

In particular, if $A$ is transitive, then $\mathrm{ad}(\gamma)$ for any $\gamma\in \Omega^1(M)$ is a Poisson vector field on $A^*$ iff $\gamma$ is  closed and a Hamiltonian vector field iff $\gamma$ is exact.
\end{Pro}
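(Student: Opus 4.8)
The plan is to test the two conditions directly against the generators $l_u$ ($u\in\Gamma(A)$) and $p^*f$ ($f\in C^\infty(M)$) of $C^\infty(A^*)$, using the relations $\{l_u,l_v\}=l_{[u,v]}$, $\{l_u,p^*f\}=p^*(\rho(u)f)$, $\{p^*f,p^*g\}=0$. Recall that $\ad(\gamma)=-\rho^*(\gamma)$ is the \emph{vertical} vector field given by translation by the section $-\rho^*(\gamma)\in\Gamma(A^*)$; hence $\ad(\gamma)(p^*f)=0$ and $\ad(\gamma)(l_u)=-p^*(\gamma(\rho(u)))$. For the Poisson assertion one writes $L_{\ad(\gamma)}\pi$ as the failure of $\ad(\gamma)$ to be a derivation of the bracket, i.e. evaluates $\ad(\gamma)\{F,G\}-\{\ad(\gamma)F,G\}-\{F,\ad(\gamma)G\}$ on the three types of pairs. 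On $(p^*f,p^*g)$ and on $(l_u,p^*f)$ all three terms vanish (for the latter, $\ad(\gamma)l_u$ is again a pullback function, hence Poisson-commutes with $p^*f$). On $(l_u,l_v)$, after using $\rho[u,v]=[\rho(u),\rho(v)]$, the whole expression collapses to $p^*\big(\rho(u)(\gamma(\rho v))-\rho(v)(\gamma(\rho u))-\gamma([\rho u,\rho v])\big)=p^*\big((d\gamma)(\rho u,\rho v)\big)=p^*\big(\rho^*(d\gamma)(u,v)\big)$. Since the differentials of the $l_u$ and $p^*f$ span $T^*_\alpha A^*$ at every $\alpha$, this shows $L_{\ad(\gamma)}\pi=0$ iff $\rho^*(d\gamma)=0$ in $\Gamma(\wedge^2A^*)$.

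For the Hamiltonian assertion, the ``if'' direction is immediate: by Proposition~\ref{symp1}, $X_{p^*f}=-\rho^*(df)$, so $\rho^*(\gamma)=\rho^*(df)$ gives $\ad(\gamma)=-\rho^*(\gamma)=-\rho^*(df)=X_{p^*f}$. For ``only if'', suppose $\ad(\gamma)=X_H$ with $H\in C^\infty(A^*)$, and set $f:=0_M^*H\in C^\infty(M)$, where $0_M\colon M\hookrightarrow A^*$ is the zero section. The key point is the shape of the Lie--Poisson bivector along $0_M$: in a local frame with fibre coordinates $\mu_a$ it reads $\tfrac12 c_{ab}^c\mu_c\,\partial_{\mu_a}\wedge\partial_{\mu_b}+\rho_a^i\,\partial_{\mu_a}\wedge\partial_{x^i}$, and the quadratic term vanishes at $\mu=0$. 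Consequently, evaluating $X_H$ on $0_M$ and using the canonical splitting $T_{0_x}A^*=T_xM\oplus A^*_x$, the vertical component of $X_H|_{0_x}$ depends only on $H|_{0_M}$ and equals $-\rho^*(df)_x$ (its horizontal component is $\rho$ of the fibrewise derivative of $H$ at $0_x$, which we will not need). Comparing with $\ad(\gamma)|_{0_x}=-\rho^*(\gamma)(x)$, which is purely vertical, forces $\rho^*(\gamma)=\rho^*(df)$.

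Finally, when $A$ is transitive $\rho$ is fibrewise surjective, so $\rho^*$ and all its exterior powers are fibrewise injective; hence $\rho^*(d\gamma)=0$ is equivalent to $d\gamma=0$, and $\rho^*(\gamma)=\rho^*(df)$ to $\gamma=df$, giving the closed/exact reformulation. The one point I expect to require care is the ``only if'' half of the Hamiltonian statement: one must rule out Hamiltonians genuinely nonlinear in the fibres that nonetheless generate a mere fibre translation. The restriction-to-the-zero-section argument, resting on the vanishing of the quadratic part of the Lie--Poisson bivector there, is what excludes this; the delicate step is to perform the tangent-space splitting along $0_M$ in an intrinsic way, so that ``the vertical part of $X_H|_{0_M}$ sees only $H|_{0_M}$'' is genuinely coordinate-independent.
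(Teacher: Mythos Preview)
Your proof is correct and, for the Poisson assertion, follows the paper's approach verbatim: both test the derivation identity on the generating pairs $(l_u,l_v)$, $(l_u,p^*f)$, $(p^*f,p^*g)$ and find the sole obstruction is $p^*\big(\rho^*(d\gamma)(u,v)\big)$. For the Hamiltonian assertion the paper merely says it is ``direct to see'', whereas your restriction-to-the-zero-section argument (exploiting the canonical splitting $T_{0_x}A^*=T_xM\oplus A^*_x$ and the vanishing of the quadratic part of the Lie--Poisson bivector there) is a clean and correct way to supply the ``only if'' direction that the paper leaves implicit; the transitive case is handled identically in both.
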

\begin{proof}
The vector field $-\rho^*(\gamma)\in \Gamma(A^*) $ is a Poisson vector field iff it is a derivation with respect to the Poisson bracket on $A^*$. 
First by definition we have
\[\rho^*(\gamma)(l_u)=p^*\langle \gamma,\rho(u)\rangle, \qquad \rho^*(\gamma)(p^*f)=0,\qquad u\in \Gamma(A),f\in C^\infty(M),\] where $p:A\to M$ is the projection. Then for $u,v \in \Gamma(A)$, we have
\begin{eqnarray*}
\rho^*(\gamma)\{l_u,l_v\}-\{\rho^*(\gamma)(l_u),l_v\}-\{l_u,\rho^*(\gamma)(l_v)\} &=&p^*\big(\langle \gamma,\rho[u,v]\rangle+\rho(v)\langle \gamma, \rho(u)\rangle-\rho(u)\langle \gamma,\rho(v)\rangle\big)\\ &=&-p^*(d\gamma(\rho(u),\rho(v))).
\end{eqnarray*}
Moreover, it is easy to see 
\begin{eqnarray*}
&&\rho^*(\gamma)\{l_u,p^*f\}-\{\rho^*(\gamma)(l_u),p^*f\}-\{l_u,\rho^*(\gamma)(p^*f)\}=0,\\ &&\rho^*(\gamma)\{p^*g,p^*f\}-\{\rho^*(\gamma)(p^*g),p^*f\}-\{p^*g,\rho^*(\gamma)(p^*f)\}=0.
\end{eqnarray*}
Hence we obtain that $\rho^*(\gamma)$ is a Poisson vector field iff $\rho^*(d\gamma)=0$. Furthermore, it is direct to see $\rho^*(\gamma)$ is a Hamiltonian vector field iff $\rho^*(\gamma)=\rho^*(df)$ for some $f\in C^\infty(M)$. Here we have used the fact $p:A\to M$ is a surjection. We then get the result.
\end{proof}

In the Lie algebra case, a symplectic leaf is a symplectic homogeneous space. For a Lie algebroid, it is seen from Proposition \ref{symp1} and \ref{symp2} that in general the affine coadjoint action of $\mathcal{J}\mathcal{G}\ltimes T^*M$ does not always preserve the symplectic structure on a symplectic leaf. We shall show that a symplectic leaf in the Lie-Poisson manifold $A^*$ is still a homogeneous space of Lie groupoids.




Recall from \cite{Liu} that a  $\mathcal{G}$-space  $P$ over $M$ is {\bf homogeneous} if there is a section $\sigma$  of the moment map $J: P\to M$ satisfying $\mathcal{G}\cdot \sigma(M)=P$. The {\bf isotropy subgroupoid} of the section $\sigma$ consists of those $g\in \mathcal{G}$ for which $g\cdot \sigma(M)\subset \sigma(M)$. This definition makes sure that a $\mathcal{G}$-space  is homogeneous if and only if it is isomorphic to $\mathcal{G}/\mathcal{H}$ for some wide subgroupoid $\mathcal{H}\subset \mathcal{G}$.

For $\alpha\in A_x^*$,  we get that $\mathcal{O}_{\alpha}$ is a homogeneous $\mathcal{J}\mathcal{G}\ltimes T^*M|_{\mathcal{L}_x}$-space since $\mathcal{J}\mathcal{G}\times T^*M \cdot \alpha=\mathcal{O}_{\alpha}$. We can choose an arbitrary section $\sigma$ of the projection $\mathcal{O}_\alpha \to \mathcal{L}_x$ such that $\sigma(x)=\alpha$. Moreover, we have 
\[\mathcal{O}_{\alpha}=(\mathcal{J}\mathcal{G}\ltimes T^*M|_{\mathcal{L}_x})/\mathcal{H}, \]
where $\mathcal{H}$ is the isotropy groupoid of $\sigma$.

\begin{Ex}
For $A=TM$, the isotropy groupoid  of a $1$-form $\sigma\in \Omega^1(M)$ extending $\alpha\in T_x^*M$ is as follows:
\[\mathcal{H}=\{(j_x^1 b, -\sigma(\phi_b(x))+(d\phi_b^{-1})^* \alpha)\in \mathcal{J}\mathcal{G}\ltimes T^*M\}\cong \mathcal{J}\mathcal{G},\]
where $b:M\to M\times M: x\to (\phi_b(x),x)$ is a bisection of $\mathcal{G}=M\times M$. In this case, we get the map
\[\psi: \mathcal{J}\mathcal{G}\ltimes T^*M\to \mathcal{J}\mathcal{G}\ltimes T^*M/\mathcal{H}\xrightarrow{\cong} T^*M,\quad \quad  (j_x^1b,\gamma)\to [(j_x^1 b,\gamma)]\to (d \phi_b^{-1})^*\alpha-(d\phi_b^{-1})^* \gamma.\]
\end{Ex}

\section{Geometric structures of  symplectic leaves}
\subsection{Symplectic leaves and symplectic reduced spaces}
As an application of Theorem \ref{main2}, we shall show that
a symplectic leaf $\mathcal{O}_\alpha$ in $A^*$ is actually a Marsden-Weinstein symplectic reduced space.

Let $\mathcal{G}\rightrightarrows M$ be the $r$-connected Lie groupoid of  a Lie algebroid $A$. For any $x\in M$, denote by $\mathcal{L}_x$ the algebroid leaf (groupoid orbit) passing $x$ and $G_x$ the isotropy Lie group at $x$.
By \cite[Theorem 5.4]{MM},  we know that for any $x\in M$, $l: r^{-1}(x)\to \mathcal{L}_x$ is a principal $G_x$-bundle,  whose gauge groupoid $r^{-1}(x)\times r^{-1}(x)/G_x$ is isomorphic to the Lie groupoid $\mathcal{G}|_{\mathcal{L}_x}$ under the map 
\[r^{-1}(x)\times r^{-1}(x)/G_x\to \mathcal{G}|_{\mathcal{L}_x},\qquad [h,g]\mapsto hg^{-1}.\]
\begin{Lem} \label{gauge}
One has a Lie algebroid isomorphism between the gauge Lie algebroid and $A$ restricting on $\mathcal{L}_x$:
\[Tr^{-1}(x)/G_x\to A|_{\mathcal{L}_x},\qquad [X]\mapsto dR_{g^{-1}}(X),\qquad X\in T_g r^{-1}(x).\]
The induced isomorphism on the isotropy Lie algebra bundle $\mathrm{ker} \rho|_{\mathcal{L}_x}$ is
\[r^{-1}(x)\times_{G_x} \mathrm{ker} \rho_x\cong \mathrm{ker} \rho |_{\mathcal{L}_x},\qquad [g,a]\mapsto \mathrm{Ad}_g a.\]
\end{Lem}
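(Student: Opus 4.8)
The plan is to prove the two isomorphisms by transporting structure through the identification $\mathcal{G}|_{\mathcal{L}_x}\cong r^{-1}(x)\times r^{-1}(x)/G_x$ recalled just before the statement, and then differentiating. For the first map, I would start with the well-known fact that for a principal $G_x$-bundle $l\colon r^{-1}(x)\to\mathcal{L}_x$, the Atiyah sequence gives the gauge algebroid $Tr^{-1}(x)/G_x$, whose sections are the $G_x$-invariant vector fields on $r^{-1}(x)$, with anchor induced by $dl$ and bracket the (well-defined, because invariant) Lie bracket of vector fields. The candidate map $[X]\mapsto dR_{g^{-1}}(X)$ for $X\in T_g r^{-1}(x)$ must first be checked to be well-defined: replacing $g$ by $gh$ with $h\in G_x$ replaces $X$ by $dR_h(X)$, and $dR_{(gh)^{-1}}dR_h(X)=dR_{g^{-1}}(X)$ since $R_{(gh)^{-1}}\circ R_h=R_{h^{-1}g^{-1}}\circ R_h=R_{g^{-1}}$ — so the class is sent to a well-defined element of $A_{l(g)}=T_{1_{l(g)}}(r^{-1}(l(g)))$. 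Smoothness and fiberwise linearity are clear; bijectivity follows because $dR_{g^{-1}}\colon T_g r^{-1}(x)\to A_{l(g)}$ is an isomorphism (right translation is a diffeomorphism from $r^{-1}(x)$ onto $r^{-1}(l(g))$) and it descends to an isomorphism on quotients.

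The substance is checking that this is a morphism of Lie algebroids, i.e. compatibility with anchors and brackets. For the anchor: the gauge-algebroid anchor of $[X]$ is $dl(X)$, and I must see this equals $\rho(dR_{g^{-1}}(X))$; this is exactly the statement that right-translation of a source-fiber vector is compatible with the target map $l$, which holds because $l\circ R_{g^{-1}}=l$ is not quite right — rather one uses $l(hg^{-1})=l(h)$ and differentiates, or equivalently recalls that the anchor $\rho=dl|_{A}$ combined with $dl\circ dR_{g^{-1}}=dl$ on source fibers. For the bracket: take two $G_x$-invariant vector fields $X,Y$ on $r^{-1}(x)$; their images are the sections $g\mapsto dR_{g^{-1}}X_g$ and $g\mapsto dR_{g^{-1}}Y_g$ of $A|_{\mathcal{L}_x}$. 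The key computation is that the Lie algebroid bracket on $A$ (defined via right-invariant vector fields on $\mathcal{G}$) corresponds, under this right-translation trivialization of the source fibers, to the bracket of vector fields on $r^{-1}(x)$; this is the classical statement that $A|_{\mathcal{L}_x}$ with its bracket is the gauge algebroid of the principal bundle $r^{-1}(x)\to\mathcal{L}_x$, and it is really just the statement that the bracket of right-invariant vector fields restricts correctly — I would phrase it by noting both sides satisfy the Leibniz rule with the same anchor and agree on a generating set (e.g. images of elements of $\mathfrak{g}_x=\ker\rho_x$ and of invariant lifts of vector fields on $\mathcal{L}_x$), then invoke uniqueness.

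For the second (induced) isomorphism on $\ker\rho|_{\mathcal{L}_x}$: restrict the first map to the kernel of the anchor. An element of $Tr^{-1}(x)/G_x$ lies in $\ker$ of the gauge anchor iff it is represented by a vertical vector, i.e. $X\in T_g(r^{-1}(x))$ tangent to the $G_x$-orbit through $g$, which is $X=dL_g(a)$ for a unique $a\in\mathfrak{g}_x=T_e G_x$ (here $G_x=l^{-1}(x)\cap r^{-1}(x)$ sits inside $r^{-1}(x)$ and acts by right multiplication, whose orbits are the fibers of $l$). Two representatives $(g,a)$ and $(gh,\Ad_{h^{-1}}a)$ give the same class, and the value of the map is $dR_{g^{-1}}dL_g(a)=\Ad_g a\in\ker\rho_{l(g)}$, matching the last displayed formula; I would verify consistency of this with the already-recorded formula $\Ad_{b(x)}u=dL_{b(x)}dR_{b(x)}(u)$ from Section~2 (up to the obvious left/right bookkeeping), and note the associated bundle description $r^{-1}(x)\times_{G_x}\ker\rho_x$ is precisely the set of such classes $[g,a]$.

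I expect the main obstacle to be the bracket-compatibility in the first isomorphism: one must be careful about which trivialization of the source fibers is used (right translation $R_{g^{-1}}$ versus the exponential/local-bisection description used elsewhere in the paper) and about orientation conventions relating the groupoid bracket on $A$ to brackets of vector fields on the source fiber. Everything else is a routine well-definedness and naturality check. I would handle the bracket by the ``agree on generators + Leibniz'' argument rather than a brute-force coordinate computation, since both the gauge algebroid bracket and the restricted algebroid bracket are determined by their anchors together with their values on $\Gamma(\ker\rho|_{\mathcal{L}_x})$ and on a spanning family of sections coming from $\mathfrak{X}(\mathcal{L}_x)$.
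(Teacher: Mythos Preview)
Your proposal is correct, and in fact it supplies considerably more detail than the paper does: the paper states this lemma without proof, treating it as the immediate infinitesimal consequence of the groupoid isomorphism $r^{-1}(x)\times r^{-1}(x)/G_x\cong \mathcal{G}|_{\mathcal{L}_x}$, $[h,g]\mapsto hg^{-1}$, recalled just before. In other words, the paper's implicit argument is simply ``apply the Lie functor to the displayed groupoid isomorphism''; since the Lie algebroid of the gauge groupoid $P\times P/G$ is $TP/G$ and the Lie algebroid of $\mathcal{G}|_{\mathcal{L}_x}$ is $A|_{\mathcal{L}_x}$, differentiating $[h,g]\mapsto hg^{-1}$ along the first factor at $h=g$ yields exactly $[X]\mapsto dR_{g^{-1}}(X)$.

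Your direct verification (well-definedness, anchor, bracket, restriction to the isotropy) is a perfectly good alternative and has the virtue of not relying on functoriality of the Lie construction. One small remark: your hesitation about ``$l\circ R_{g^{-1}}=l$ is not quite right'' is unfounded --- on $r^{-1}(x)$ this identity is literally $l(hg^{-1})=l(h)$, which you then use anyway; so the anchor check is a one-liner. For the bracket, the cleanest route is indeed the one you gesture at last: both brackets are determined (via Leibniz and the common anchor) by their restriction to $\ker\rho$, where they agree because $dR_{g^{-1}}dL_g=\Ad_g$ is a Lie algebra isomorphism, together with a choice of horizontal lift. Your computation of the induced map on the isotropy bundle, $dR_{g^{-1}}\circ dL_g|_{\ker\rho_x}=\Ad_g$, is exactly right and matches the paper's earlier formula $\Ad_{b(x)}u=dL_{b(x)}dR_{b(x)^{-1}}(u)$ for $u\in\ker\rho_x$.
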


For simplicity of notations, we first talk about the gauge Lie algebroid $TP/G$ of a general principal $G$-bundle $P$ over $M$. Consider the Atiyah sequence \[0\to P\times_G \mathfrak{g}\xrightarrow {\widehat{\cdot}} TP/G\xrightarrow{\pi_*} TM\to 0,\]
and its dual sequence 
\begin{eqnarray}\label{Atiyah}
0\to T^*M \to T^*P/G\xrightarrow{J} P\times_G \mathfrak{g}^*\to 0.
\end{eqnarray}
By definition, $J$ is given by 
\[\langle J(\alpha_p), [p,u]\rangle:=\langle \alpha, \hat{u}\rangle (p),\quad \quad \alpha\in \Omega^1(P)^G,  u\in \mathfrak{g},\]
where $\hat{u}\in \mathfrak{X}(P)$ is the fundamental vector field generated by $u$.

Denote by $\tilde{J}: T^*P\to \mathfrak{g}^*$ the moment map of  the Hamiltonian action $G$ acting on $T^*P$ (cotangent lifting). Then one has $\tilde{J}(\alpha\cdot g)=\Ad_{g^{-1}}^*\tilde{J}(\alpha)$. The maps $\tilde{J}$ and $J$ in (\ref{Atiyah}) are related by 
 the following commutative diagram
\begin{equation*}
		\begin{tikzcd}
			T^*P\arrow{d}{} \ar{r}{\tilde{J}} &P\times \mathfrak{g}^*\arrow{d}{}\\
			T^*P/G \arrow{r}{J} &P\times_G \mathfrak{g}^*,
		\end{tikzcd}
\end{equation*}
where the vertical maps are the natural projections.

The following result is well-known; see \cite{Weinstein1983} for example. Here we prove it by applying Theorem \ref{main2}.
\begin{Pro}\label{MW-reduction}
For $\alpha\in T^*P/G$ such that $J(\alpha)=[p,\mu]\in P\times_G \mathfrak{g}^*$, 
we have \[\mathcal{O}_{\alpha}= \tilde{J}^{-1}(\mathcal{S}_\mu)/G.\]
Namely, a symplectic leaf is an orbit reduced space.
\end{Pro}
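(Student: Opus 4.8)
The plan is to identify the symplectic leaf $\mathcal{O}_\alpha$ in $T^*P/G$, as computed via Theorem \ref{main2}, with the Marsden--Weinstein orbit-reduced space $\tilde{J}^{-1}(\mathcal{S}_\mu)/G$, by tracking both sets back up to $T^*P$ and comparing there, where everything is governed by the cotangent-lifted $G$-action and its equivariant moment map $\tilde J$.

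First I would recall that, for the gauge groupoid $\mathcal{G}=P\times_G P$, the jet groupoid $\mathcal{J}\mathcal{G}$ acting on $A^*=T^*P/G$ is, by Lemma \ref{gauge} and the identification $\mathcal{G}|_{\mathcal{L}_x}\cong r^{-1}(x)\times r^{-1}(x)/G_x$, precisely the gauge-theoretic avatar of the cotangent lift: an arrow of $\mathcal{G}$ from one point of $M$ to another is represented by a $G$-equivariant diffeomorphism between fibers of $P$, its jet prolongation acts on $T^*P$ by the cotangent lift, and this descends to $T^*P/G$. Combined with the vertical $T^*M$-translation in \eqref{affine}, formula \eqref{sym leaf} says that $\mathcal{O}_\alpha$ is swept out from $\alpha$ by all these cotangent-lifted arrows together with the affine shifts $\rho^*(d\phi_b^{-1})^*\gamma$. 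The key observation is that the $T^*M$-translation direction, pulled back to $T^*P$, lies exactly in the kernel directions of $\tilde J$ (since $T^*M\hookrightarrow T^*P/G$ is $\ker \tilde J$ descended, by exactness of the dual Atiyah sequence \eqref{Atiyah}), while the $\mathcal{J}\mathcal{G}$-direction moves $\tilde J(\alpha)=\mu$ within its coadjoint orbit $\mathcal{S}_\mu$ by the equivariance $\tilde J(\alpha\cdot g)=\Ad_{g^{-1}}^*\tilde J(\alpha)$.

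The proof then proceeds by a double inclusion after lifting to $T^*P$. Let $\widetilde{\mathcal{O}}$ denote the preimage of $\mathcal{O}_\alpha$ under $T^*P\to T^*P/G$. On one hand, using Theorem \ref{main2} which guarantees $\mathcal{O}_\alpha$ is the symplectic leaf through $\alpha$, and the fact that $T^*P\to T^*P/G$ is a Poisson submersion (indeed $T^*P$ is symplectic), $\widetilde{\mathcal{O}}$ is a union of symplectic leaves of $T^*P$ saturated by the $G$-action, hence — $T^*P$ being symplectic and connected fiberwise over the leaf — it is the $G$-saturation of a connected piece; one shows this piece is contained in a single level set structure compatible with $\tilde J$. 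On the other hand, the standard fact that $\tilde J^{-1}(\mathcal{S}_\mu)$ is the $G$-saturation of $\tilde J^{-1}(\mu)$, together with $\tilde J$ being constant on the fibers of $T^*P\to T^*P/G$ only modulo the $G$-action, gives that $\tilde J^{-1}(\mathcal{S}_\mu)/G$ is well-defined and coincides with the image of $\tilde J^{-1}(\mathcal{S}_\mu)$. I would show $\widetilde{\mathcal{O}}=\tilde J^{-1}(\mathcal{S}_\mu)$ by checking the tangent spaces agree: the tangent space to $\tilde J^{-1}(\mathcal{S}_\mu)$ at a point is $(\ker d\tilde J) + (\text{tangents to the } G\text{-orbit})$, and under the projection to $T^*P/G$ this is exactly $\ker dJ$ plus the $J$-horizontal directions moving $[p,\mu]$ inside $[p,\mathcal{S}_\mu]$ — which by the commutative square relating $\tilde J$ and $J$ and by the computation of fundamental vector fields in the proof of Theorem \ref{main2} is precisely $T_\alpha\mathcal{O}_\alpha$. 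A connectedness argument (both $\mathcal{O}_\alpha$ and $\tilde J^{-1}(\mathcal{S}_\mu)/G$ are connected, the latter because $\mathcal{S}_\mu$ and the $G$-orbits are) then upgrades the infinitesimal equality to the global one.

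The main obstacle I anticipate is the bookkeeping needed to match the \emph{affine} part of the action in \eqref{affine} — the term $\rho^*(d\phi_b^{-1})^*\gamma$ with $\gamma\in T^*M$ — with the moment-map geometry on $T^*P$: one must verify carefully that, after pulling back to $T^*P$, this vertical translation by $T^*M$ corresponds exactly to moving within a fiber of $\tilde J$ (equivalently, adding pullbacks of one-forms on $M$ to a covector on $P$, which indeed does not change $\tilde J$ since $\hat u$ is vertical), and conversely that the $\mathcal{J}\mathcal{G}$-orbit direction surjects onto the tangent directions of $\mathcal{S}_\mu$ at $\mu$ via $\tilde J$. This amounts to showing the decomposition $T\mathcal{O}_\alpha \cong (\text{"}\ker J\text{" directions}) \oplus (\text{"}T\mathcal{S}_\mu\text{" directions})$ is compatible with $T(\tilde J^{-1}(\mathcal{S}_\mu)) = \ker d\tilde J + \mathfrak{g}\cdot(-)$; once this local picture is pinned down, the equality of the two reduced spaces follows, and in fact this compatibility is exactly what makes the fiber-bundle structure $\mathcal{O}_\alpha \to T^*\mathcal{L}_x$ with fiber $\mathcal{S}_{i^*\alpha}$ (advertised in the introduction) visible, which is presumably exploited in the sequel.
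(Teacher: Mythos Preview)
Your approach is correct and shares the paper's broad strategy --- invoke Theorem \ref{main2}, show the orbit lands in $\tilde J^{-1}(\mathcal{S}_\mu)/G$, and finish by connectedness --- but your execution is more elaborate than what the paper actually does. The paper never lifts to $T^*P$ or compares tangent spaces. Instead it makes the single concrete observation that $\mathrm{Bis}(P\times_G P)=\Aut(P)$, so that by \eqref{sym leaf} every point of $\mathcal{O}_\alpha$ is of the explicit form $(dF)^*\alpha+\pi_P^*(df)^*\gamma$ for some $G$-equivariant local diffeomorphism $F$ of $P$ and some $\gamma\in T_x^*M$. A two-line pointwise computation of $J$ on such an element (pairing against $[F^{-1}(p),a]$ and using that $dF$ carries fundamental vector fields to fundamental vector fields) yields $J\big((dF)^*\alpha+\pi_P^*(df)^*\gamma\big)=[F^{-1}(p),\mu]$, so $J(\mathcal{O}_\alpha)\subset P\times_G\mathcal{S}_\mu$ and hence $\mathcal{O}_\alpha\subset\tilde J^{-1}(\mathcal{S}_\mu)/G$. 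Connectedness (both sides being connected symplectic submanifolds of the same Poisson manifold, one a leaf) gives equality.

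Your infinitesimal route --- matching $T\widetilde{\mathcal{O}}$ with $\ker d\tilde J+\mathfrak{g}\cdot(-)$ --- would also work and is arguably more conceptual, but the paper's pointwise computation is shorter and entirely sidesteps the bookkeeping you identified as the main obstacle: once the orbit elements are written as $(dF)^*\alpha+\pi_P^*(df)^*\gamma$, the fact that the $\pi_P^*(df)^*\gamma$ term is killed by $J$ and the $(dF)^*$ term moves $\mu$ along $\mathcal{S}_\mu$ is immediate from the definition of $J$, with no need to decompose tangent spaces.
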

\begin{proof}
Notice that the bisection group of $P\times P/G$ is the space of $G$-invariant functions on $P$, namely,
\[\mathrm{Bis}(\mathcal{G})=\Aut(P)=\{F\in \mathrm{Diff}(P); F(pg)=F(p)g\}.\]
By Theorem \ref{main2} and straightforward calculation of the affine coadjoint orbits of this case, we find that the symplectic leaf passing $\alpha\in (T^*P/G)_x$ is 
\[\mathcal{O}_{\alpha}=\{(dF)^*(\alpha)+\pi_P^*(df)^*\gamma;\quad \forall \gamma\in T_{x}^*M, \forall F\in \mathrm{LBis_x}(\mathcal{G})\},\]
where $\pi_P$ is the projection from $P$ to $M$ and $f\in \mathrm{Diff}(M)$ is the unique function induced by $F$.

By definition, for a local bisection $F\in \mathrm{LBis_x}(\mathcal{G})$, we have 
\begin{eqnarray*}
\langle J((dF)^* \alpha+\pi_p^*((df)^*\gamma)), [F^{-1}(p),a]\rangle=\langle \alpha, dF(\hat{a}_{F^{-1}(p)})\rangle=\langle \alpha, \hat{a}_p\rangle=\langle J(\alpha),[p,a]\rangle=\langle \mu, a\rangle.
\end{eqnarray*}
This implies 
\[J((dF)^* \alpha+\pi^*(df)^*\gamma)=[F^{-1}(p),\mu]=[F^{-1}(p)g, \Ad_{g^{-1}}^* \mu].\]
So we have $J(\mathcal{O}_{\alpha})=[q,\mathcal{S}_\mu]$ for any $q\in P$. We thus get $\mathcal{O}_{\alpha}\subset \tilde{J}^{-1}(\mathcal{S}_\mu)/G$, where the latter is the symplectic reduced space. The connectedness forces them to be equal. This concludes the proof.
\end{proof}
It is shown in \cite{GS} that the symplectic reduced space $\tilde{J}^{-1}(\mu)/G_\mu$ is symplectically diffeomorphic to the orbit reduced space $\tilde{J}^{-1}(\mathcal{S}_\mu)/G$ and from this it follows that the symplectic leaf in $T^*P/G$ is a symplectic reduced space by Marsden and Weinstein.

As we have noticed from Corollary \ref{3.2},  the symplectic leaf $\mathcal{O}_\alpha$ passing  $\alpha\in A_x^*$ only depends on the Lie algebroid $A|_{\mathcal{L}_x}$, which is isomorphic to a gauge Lie algebroid by Lemma \ref{gauge}. Then by Proposition \ref{MW-reduction}, we have
\begin{Cor}
The symplectic leaf $\mathcal{O}_\alpha$ in $A^*$ is symplectically isomorphic to a  symplectic reduced space.
\end{Cor}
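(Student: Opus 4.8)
The plan is to assemble the corollary from the three ingredients already established. First I would invoke Corollary~\ref{3.2} together with the remark following it: the description \eqref{sym leaf} of $\mathcal{O}_\alpha$ through $\alpha\in A_x^*$ only involves local bisections of $\mathcal{G}$ near $x$, the anchor, and vertical translations by $T^*M$ along $\mathcal{L}_x$, so $\mathcal{O}_\alpha$ depends only on the restricted (transitive) Lie algebroid $A|_{\mathcal{L}_x}$ together with the point $\alpha$. Hence it suffices to treat $A|_{\mathcal{L}_x}$ in place of $A$.

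Second, by Lemma~\ref{gauge} there is a Lie algebroid isomorphism $A|_{\mathcal{L}_x}\cong Tr^{-1}(x)/G_x$, the gauge algebroid of the principal bundle $P:=r^{-1}(x)\to\mathcal{L}_x$ with structure group $G_x$. I would then use the general fact that an isomorphism of Lie algebroids $\Phi\colon A_1\to A_2$ covering a diffeomorphism dualises fibrewise to a vector bundle isomorphism $\Phi^*\colon A_2^*\to A_1^*$ which is a Poisson diffeomorphism for the two Lie-Poisson structures; this is immediate from the generators $\{l_u,l_v\}=l_{[u,v]}$ and $\{l_u,p^*f\}=p^*(\rho(u)f)$, since a Lie algebroid morphism is precisely what preserves bracket and anchor. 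A Poisson diffeomorphism carries symplectic leaves to symplectic leaves symplectically, so under the canonical identification $(TP/G)^*\cong T^*P/G$ the leaf $\mathcal{O}_\alpha$ is symplectically isomorphic to the symplectic leaf $\mathcal{O}_{\alpha'}$ in $T^*P/G$ passing through the image $\alpha'$ of $\alpha$.

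Third, I would apply Proposition~\ref{MW-reduction} to the gauge algebroid $T^*P/G$ (with $G=G_x$): writing $J(\alpha')=[p,\mu]\in P\times_G\mathfrak{g}^*$ for the map $J$ of \eqref{Atiyah}, it yields $\mathcal{O}_{\alpha'}=\tilde J^{-1}(\mathcal{S}_\mu)/G$, the orbit reduced space for the cotangent-lifted $G$-action on $T^*P$. Finally, the classical Marsden--Weinstein identification recalled just after Proposition~\ref{MW-reduction} (see \cite{GS}) provides a symplectomorphism $\tilde J^{-1}(\mathcal{S}_\mu)/G\cong\tilde J^{-1}(\mu)/G_\mu$; composing the three symplectomorphisms exhibits $\mathcal{O}_\alpha$ as a symplectic reduced space.

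The first and third steps are bookkeeping. The step that needs genuine care is the second: one must check that the dual of the isomorphism of Lemma~\ref{gauge} is not merely a diffeomorphism but transports the Lie-Poisson structure of $A^*|_{\mathcal{L}_x}$ to that of $(Tr^{-1}(x)/G_x)^*$, and therefore intertwines the symplectic forms on corresponding leaves --- equivalently, that the symplectic leaf computed for the restricted algebroid coincides with $\mathcal{O}_\alpha\subset A^*$ as a symplectic manifold, not just as a subset. This follows from \eqref{sym leaf} and the naturality of the Lie-Poisson construction, but it is where all the identifications must be pinned down compatibly.
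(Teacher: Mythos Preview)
Your proposal is correct and follows essentially the same route as the paper: reduce to the restricted algebroid $A|_{\mathcal{L}_x}$ via Corollary~\ref{3.2}, identify it with the gauge algebroid via Lemma~\ref{gauge}, and then apply Proposition~\ref{MW-reduction} together with the classical symplectomorphism $\tilde J^{-1}(\mathcal{S}_\mu)/G\cong\tilde J^{-1}(\mu)/G_\mu$ from \cite{GS}. Your added care in spelling out that the dual of a Lie algebroid isomorphism is a Poisson diffeomorphism (hence intertwines symplectic leaves symplectically) makes explicit a point the paper leaves implicit.
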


\begin{Rm}
In the case of gauge algebroids, we recover the symplectic reduction for cotangent bundles \cite{MR}.  And a symplectic leaf is a universal phase space  for a classical particle in a Yang-Mills field explained in \cite{Weinstein}. "Universal" means that the symplectic structure is independent of the choice of a connection, which is relative to the phase space constructed by Sternberg \cite{GS}.
\end{Rm}

\subsection{Fiber bundle structures of symplectic leaves}
Now we study the geometric structure on a symplectic leaf of $A^*$ for a Lie algebroid $A$.

For $x\in M$, denote by $\mathrm{ker} \rho_x$, the kernel  of the anchor map $\rho:A_x\to T_x M$ at $x$, which is called the {\bf isotropy Lie algebra}, and $\mathcal{L}_x$, the algebroid leaf through $x$ which is tangent to $\mathrm{Im} \rho$. The isotropy Lie group is denoted as $G_x$. There is a natural map $i_x^*: A_x^*\to (\mathrm{ker} \rho_x)^*$ that dualizes the inclusion $i_x: \mathrm{ker} \rho_x \hookrightarrow A_x$. So any $\alpha\in A_x^*$ defines an element $i_x^* \alpha$ in $(\mathrm{ker} \rho_x)^*$. As a consequence, to any $\alpha\in A^*_x$, we can associate two symplectic manifolds with canonical symplectic structures:
\begin{itemize}
\item[1)] $\mathcal{S}_{i^*\alpha}$, the coadjoint orbit of $i^*\alpha$ in the dual $(\mathrm{ker} \rho_x)^*$;
\item[2)] $T^*\mathcal{L}_x$, the cotangent bundle of the algebroid leaf through $x$.
\end{itemize}
The symplectic leaf $\mathcal{O}_\alpha$ passing $\alpha\in A_x^*$  relates with both. 

Let $r^{-1}(x)^\sharp$ be the pull-back  bundle of the principal $G_x$-bundle $r^{-1}(x)$ over $\mathcal{L}_x$ along the projection $T^*\mathcal{L}_x\to \mathcal{L}_x$:
\begin{equation*}
		\begin{tikzcd}
			r^{-1}(x)^\sharp\arrow{d}{} \ar{r}{} &r^{-1}(x)\arrow{d}{}\\
			T^*\mathcal{L}_x \arrow{r}{} &\mathcal{L}_x,
		\end{tikzcd}
\end{equation*}
which is a principal $G_x$-bundle  over $T^*\mathcal{L}_x$. 
\begin{Thm}\label{main}
The symplectic leaf passing $\alpha\in A_x^*$ is a fiber bundle over $T^*\mathcal{L}_x$  with the fiber type 
$\mathcal{S}_{i^*\alpha}$. 

Explicitly, it is isomorphic to the associated fiber bundle $r^{-1}(x)^\sharp\times_{G_x} \mathcal{S}_{i^*\alpha}$ over $T^*\mathcal{L}_x$.
\end{Thm}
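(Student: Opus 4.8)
The plan is to reduce to the gauge algebroid model, identify $\mathcal{O}_\alpha$ with an orbit reduced space of a cotangent bundle via Proposition \ref{MW-reduction}, and then recognise the resulting object over $T^*\mathcal{L}_x$ as the associated bundle of a principal connection --- i.e. as Weinstein's universal phase space.

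\emph{Step 1 (reduction to the gauge case).} As noted after Corollary \ref{3.2}, $\mathcal{O}_\alpha$ depends only on $A|_{\mathcal{L}_x}$, and by Lemma \ref{gauge} this is the gauge Lie algebroid $TP/G_x$ of the principal $G_x$-bundle $\pi: P \to \mathcal{L}_x$ with $P = r^{-1}(x)$ and $\pi = l$. I would transport $\alpha$ to the corresponding element of $(T^*P/G_x)|_{\mathcal{L}_x}$. The unit $1_x \in P$ lies over $x$ and gives a distinguished point of the fiber $P_x$; with it $\ker\rho_x = \Lie(G_x) =: \mathfrak{g}$ canonically, and the map $J$ of the dual Atiyah sequence \eqref{Atiyah} restricts over $x$, after the identification $(P\times_{G_x}\mathfrak{g}^*)_x \cong \mathfrak{g}^* = (\ker\rho_x)^*$ afforded by $1_x$, to $i_x^*$ itself. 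Hence $J(\alpha) = [1_x, i_x^*\alpha]$, so the coadjoint orbit $\mathcal{S}_\mu$ of $\mu = i_x^*\alpha$ appearing in Proposition \ref{MW-reduction} is exactly $\mathcal{S}_{i^*\alpha} \subset (\ker\rho_x)^*$, and that proposition gives
\[ \mathcal{O}_\alpha = \tilde{J}^{-1}(\mathcal{S}_{i^*\alpha})/G_x, \]
where $\tilde{J}: T^*P \to \mathfrak{g}^*$ is the cotangent-lift moment map of the $G_x$-action on $T^*P$.

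\emph{Step 2 (the bundle structure).} Fix a principal connection on $P$, with horizontal--vertical splitting $TP = H \oplus V$. Dualizing yields a $G_x$-equivariant decomposition $T^*P \cong V^\circ \oplus H^\circ \cong \pi^*T^*\mathcal{L}_x \oplus (P \times \mathfrak{g}^*)$, in which $V^\circ$ (covectors annihilating the vertical bundle) is canonically $\pi^*T^*\mathcal{L}_x$, $H^\circ$ is canonically $P \times \mathfrak{g}^*$, and --- since $\tilde{J}$ sees only the pairing with vertical vectors --- $\tilde{J}$ becomes the projection onto the $\mathfrak{g}^*$-summand. Consequently
\[ \tilde{J}^{-1}(\mathcal{S}_{i^*\alpha}) = \pi^*T^*\mathcal{L}_x \oplus (P \times \mathcal{S}_{i^*\alpha}) \cong r^{-1}(x)^\sharp \times \mathcal{S}_{i^*\alpha}, \]
the last identification using that the space $\{(p,\xi) : \xi \in T^*_{\pi(p)}\mathcal{L}_x\}$ underlying $\pi^*T^*\mathcal{L}_x$ is precisely the pull-back $r^{-1}(x)^\sharp$ of $\pi: P \to \mathcal{L}_x$ along $T^*\mathcal{L}_x \to \mathcal{L}_x$, and the $G_x$-action on the right being the diagonal one, acting on $r^{-1}(x)^\sharp$ by the pulled-back principal action and on $\mathcal{S}_{i^*\alpha}$ coadjointly. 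Taking the quotient by $G_x$ then gives
\[ \mathcal{O}_\alpha = \tilde{J}^{-1}(\mathcal{S}_{i^*\alpha})/G_x \cong \bigl( r^{-1}(x)^\sharp \times \mathcal{S}_{i^*\alpha} \bigr)/G_x = r^{-1}(x)^\sharp \times_{G_x} \mathcal{S}_{i^*\alpha}, \]
a locally trivial fiber bundle over $r^{-1}(x)^\sharp/G_x = T^*\mathcal{L}_x$ with fiber type $\mathcal{S}_{i^*\alpha}$, its projection induced by the $G_x$-equivariant map $\tilde{J}^{-1}(\mathcal{S}_{i^*\alpha}) \to \pi^*T^*\mathcal{L}_x$.

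The Step 2 computations --- the equivariance of the connection splitting and the identification of $\tilde{J}$ with a projection --- are routine and classical in cotangent-bundle reduction, and I would simply invoke \cite{Weinstein, GS}; the auxiliary connection enters the isomorphism, but up to isomorphism over $T^*\mathcal{L}_x$ the bundle is always $r^{-1}(x)^\sharp \times_{G_x} \mathcal{S}_{i^*\alpha}$. The point needing genuine care is Step 1: one must verify, through the isomorphisms of Lemma \ref{gauge}, that $J(\alpha)$ corresponds to $i_x^*\alpha$, so that the abstract orbit $\mathcal{S}_\mu$ of Proposition \ref{MW-reduction} really is the intrinsic coadjoint orbit $\mathcal{S}_{i^*\alpha}$ attached to $(\mathcal{L}_x, \ker\rho_x, i^*\alpha)$; without this bookkeeping the statement would not be expressed in terms of the original algebroid $A$.
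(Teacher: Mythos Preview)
Your argument is correct but proceeds by a genuinely different route from the paper. The paper does not pass through Proposition~\ref{MW-reduction}; instead it works directly with the explicit description \eqref{sym leaf} of $\mathcal{O}_\alpha$ as an affine coadjoint orbit. Choosing a splitting $\lambda:T\mathcal{L}_x\to A|_{\mathcal{L}_x}$ of the Atiyah-type sequence, the paper shows by hand that $\lambda^*:\mathcal{O}_\alpha\to T^*\mathcal{L}_x$ is surjective, and then computes for each $y\in\mathcal{L}_x$ that $i_y^*$ carries the fiber of $\lambda^*$ over $T_y^*\mathcal{L}_x$ onto the coadjoint orbit through $\Ad_{b_0(x)}^*i^*\alpha$ in $(\ker\rho_y)^*$, using the orbit formula \eqref{sym leaf} and the identity $\Ad_{j_x^1 b}|_{\ker\rho_x}=\Ad_{b(x)}$; the associated-bundle description then follows from the isomorphism $\ker\rho|_{\mathcal{L}_x}\cong r^{-1}(x)\times_{G_x}\ker\rho_x$ of Lemma~\ref{gauge}. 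Your approach instead black-boxes this computation into Proposition~\ref{MW-reduction} and then invokes the standard connection splitting $T^*P\cong \pi^*T^*\mathcal{L}_x\oplus(P\times\mathfrak{g}^*)$ from cotangent reduction. The paper's method is more self-contained and yields the bundle projection $\lambda^*$ intrinsically in terms of $A$; your method is more conceptual and makes the link to Weinstein's universal phase space immediate, at the cost of relying on Proposition~\ref{MW-reduction} (whose proof in the paper already uses the orbit formula) and the external literature \cite{Weinstein,GS}. Note also that your splitting via a principal connection and the paper's splitting $\lambda$ are the same datum, just viewed on $P$ versus on $A|_{\mathcal{L}_x}$.
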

\begin{proof} Taking the dual of the following exact sequence of vector bundles
\[0\to \ker \rho|_{\mathcal{L}_x}\xrightarrow{i} A|_{\mathcal{L}_x}\xrightarrow{\rho}  T\mathcal{L}_x\to 0,\]
we get
\[0\to T^*\mathcal{L}_x\xrightarrow{\rho^*} A^*|_{\mathcal{L}_x}\xrightarrow{i^*} (\mathrm{ker} \rho)^*|_{\mathcal{L}_x}\to 0.\]

Choose a horizontal lifting $\lambda:T\mathcal{L}_x\to A|_{\mathcal{L}_x}$, i.e. a bundle map such that $\rho\circ \lambda=id$.  We claim the map
\[\lambda^*:\mathcal{O}_\alpha\to T^*\mathcal{L}_x,\]
 is a surjection and such that the fiber over a point in $T_y^* \mathcal{L}_x$ is a coadjoint orbit of $G_y$  on $(\mathrm{ker} \rho_y)^*$.
In fact, for any $\beta\in T_y^*\mathcal{L}_x$,  let $b_0$ be any local bisection such that $\phi_{b_0}(x)=y$. Choose $\gamma\in T_x^*M$ such that $j^*((d \phi_{b_0}^{-1})^*\gamma)=\beta-\lambda^*(\Ad_{j_x^1 {b_0}}^* \alpha)$, where $j:\mathcal{L}_x\hookrightarrow M$ is the inclusion map.  Then we have
\[\lambda^*(\Ad_{j_x^1 b_0}^* \alpha+\rho^*(d\phi_{b_0}^{-1})^*\gamma)=\beta.\]
By (\ref{sym leaf}), we see $\Ad_{j_x^1 b_0}^* \alpha+\rho^*(d\phi_{b_0}^{-1})^*\gamma\in \mathcal{O}_\alpha$ and thus the map $\lambda^*:\mathcal{O}_\alpha\to T^*\mathcal{L}_x$ is onto.

For any $a\in \mathrm{ker} \rho_y$ and $y\in \mathcal{L}_x$,  we have
\begin{eqnarray*}
\langle i^*(\Ad_{j_x^1 b_0}^* \alpha+\rho^*(d\phi_{b_0}^{-1})^*\gamma), a\rangle&=&\langle \Ad_{j_x^1 b_0}^*\alpha, i(a)\rangle=\langle \alpha, \Ad_{b_0(y)^{-1}} i(a)\rangle \\ &=&\langle \alpha, i(\Ad_{j_y^1 b_0^{-1}} a)\rangle =\langle \Ad_{b_0(x)}^* i^*\alpha,a\rangle.
\end{eqnarray*}
For any local bisection $b$ such that $\phi_b(x)=y$, we have $h:=b(x)b_0(x)^{-1}\in G_y$, the isotropy group at $y$.  Thus we have
\[i^*(\Ad_{j_x^1 b}^* \alpha+\rho^*(d\phi_{b}^{-1})^*\gamma)=\Ad_{h}^*(\Ad_{b_0(x)}^*i^* \alpha).\]
This implies that  the image of the symplectic leaf $\mathcal{O}_\alpha$ in $A^*$  under $i^*_y$ is the coadjoint orbit passing $\Ad^*_{b_0(x)}i^* \alpha\in (\mathrm{ker} \rho_y)^*$, which is a symplectic leaf in the Lie-Poisson manifold $(\mathrm{ker} \rho_y)^*$.  This proves that the fiber of $\lambda^*$ over $\beta\in T_y^*\mathcal{L}_x$ is 
the coadjoint orbit passing $\Ad_{ b_0(x)}^* i^* \alpha$ in $(\mathrm{ker} \rho_y)^*$. 

Observe the isomorphism \[ r^{-1}(x)\times_{G_x} \mathrm{ker} \rho_x\mapsto \mathrm{ker} \rho_y,\qquad [g,u]\mapsto \Ad_g u,\qquad g:x\to y.\]
We have
 \[A^*|_{\mathcal{L}_x}\cong T^* \mathcal{L}_x\oplus (r^{-1}(x)\times_{G_x} (\mathrm{ker} \rho_x)^*)\qquad \mathcal{O}_\alpha\cong T^*\mathcal{L}_x\times_{\mathcal{L}_x} (r^{-1}(x)\times_{G_x} \mathcal{S}_{i^*(\alpha)}),\]
 as fiber bundles over $\mathcal{L}_x$. Notice that $T^*\mathcal{L}_x \times_{\mathcal{L}_x} (r^{-1}(x)\times_{G_x} \mathcal{S}_{i^*\alpha})$ equals to 
 $r^{-1}(x)^\sharp \times_{G_x} \mathcal{S}_{i^*\alpha}$ as manifolds. The symplectic leaf $\mathcal{O}_\alpha$ is also the associated fiber bundle over $T^*\mathcal{L}_x$.
\end{proof}


From the above proof , we can also see
the symplectic leaf $\mathcal{O}_\alpha$ passing $\alpha\in A_x^*$ is isomorphic to $T^*\mathcal{L}_x\oplus (r^{-1}(x)\times_{G_x} \mathcal{S}_{i^*\alpha})$ as a fiber bundle over $\mathcal{L}_x$.

Now we shall use the splitting to put a symplectic structure on $T^*\mathcal{L}_x\times_{\mathcal{L}_x} (r^{-1}(x)\times_{G_x} \mathcal{S}_{i^*\alpha})=r^{-1}(x)^\sharp\times_{G_x} \mathcal{S}_{i^*\alpha}$  (as manifolds) to make the isomorphism symplectic.

If a splitting $\lambda: T\mathcal{L}_x\to A|_{\mathcal{L}_x}$ of the sequence 
\[0\to \ker \rho|_{\mathcal{L}_x}\xrightarrow{i} A|_{\mathcal{L}_x}\xrightarrow{\rho}  T\mathcal{L}_x\to 0\]
is chosen, we have a $\mathrm{ker} \rho|_{\mathcal{L}_x}$-valued $2$-form $R_\lambda\in \Omega^2(\mathcal{L}_x,\mathrm{ker} \rho|_{\mathcal{L}_x})$ defined by
\[R_\lambda(X,Y):=\lambda([X,Y]_S)-[\lambda(X),\lambda(Y)]_{A|_{\mathcal{L}_x}},\qquad X,Y\in \mathfrak{X}(\mathcal{L}_x).\]

\begin{Lem}
The decomposition \[T\mathcal{L}_x\oplus \mathrm{ker} \rho|_{\mathcal{L}_x}\cong A|_{\mathcal{L}_x},\qquad (X,U)\mapsto (\lambda(X),i(U))\] induces a Lie algebroid structure on $T\mathcal{L}_x\oplus \mathrm{ker} \rho|_{\mathcal{L}_x}$ with  the bracket given by 
\begin{eqnarray*}\label{4-term}
 [X,Y]=[X,Y]_S-R_\lambda(X,Y),\quad  [U,V]=[U,V]_{\mathrm{ker} \rho|_{\mathcal{L}_x}}, \quad  [X,U]=[\lambda(X), U]_{A|_{\mathcal{L}_x}}, 
\end{eqnarray*}
and the anchor being the projection to $T\mathcal{L}_x$.
\end{Lem}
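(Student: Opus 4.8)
The plan is to prove this by transport of structure along the vector bundle isomorphism
\[\Phi:\ T\mathcal{L}_x\oplus \ker\rho|_{\mathcal{L}_x}\ \xrightarrow{\ \cong\ }\ A|_{\mathcal{L}_x},\qquad (X,U)\longmapsto \lambda(X)+i(U),\]
which is an isomorphism exactly because $\lambda$ splits the exact sequence $0\to \ker\rho|_{\mathcal{L}_x}\xrightarrow{i} A|_{\mathcal{L}_x}\xrightarrow{\rho} T\mathcal{L}_x\to 0$. Since any isomorphism of vector bundles over a fixed base carries a Lie algebroid structure to a genuine (isomorphic) Lie algebroid structure, the pullback $\rho':=\rho\circ\Phi$ together with $[\cdot,\cdot]':=\Phi^{-1}\circ[\Phi(\cdot),\Phi(\cdot)]_{A|_{\mathcal{L}_x}}$ automatically defines a Lie algebroid on $T\mathcal{L}_x\oplus \ker\rho|_{\mathcal{L}_x}$. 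So the whole content of the lemma is to compute $\rho'$ and $[\cdot,\cdot]'$ and match them with the stated formulas; in particular no independent verification of the Leibniz rule or the Jacobi identity for the new bracket is needed, as both are inherited through $\Phi$.

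For the anchor I would compute $\rho'(X,U)=\rho(\lambda(X)+i(U))=\rho\lambda(X)=X$, using $\rho\circ\lambda=\id$ and $\rho\circ i=0$, so $\rho'$ is the projection to $T\mathcal{L}_x$. For the bracket, $\mathbb{R}$-bilinearity, skew-symmetry and the Leibniz rule reduce the computation to sections of the two summands. Using that the anchor of a Lie algebroid is bracket-preserving on sections, $\rho[i(U),i(V)]_{A}=0$ and $\rho[\lambda(X),i(U)]_{A}=[X,0]_{T\mathcal{L}_x}=0$, so both brackets land in $i(\Gamma(\ker\rho|_{\mathcal{L}_x}))$; hence $[U,V]'=[U,V]_{\ker\rho|_{\mathcal{L}_x}}$ (as $\ker\rho|_{\mathcal{L}_x}$ is a Lie algebra subbundle of $A|_{\mathcal{L}_x}$) and $[X,U]'=[\lambda(X),U]_{A|_{\mathcal{L}_x}}$, under the identification of $\ker\rho|_{\mathcal{L}_x}$ with its image by $i$. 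Finally, rearranging the definition of $R_\lambda$ gives $[\lambda(X),\lambda(Y)]_{A}=\lambda([X,Y]_S)-R_\lambda(X,Y)$, with $R_\lambda(X,Y)\in\Gamma(\ker\rho|_{\mathcal{L}_x})$ because $\rho R_\lambda(X,Y)=[X,Y]_S-[\rho\lambda(X),\rho\lambda(Y)]_{T\mathcal{L}_x}=0$; applying $\Phi^{-1}$ then yields $[X,Y]'=[X,Y]_S-R_\lambda(X,Y)$.

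I expect no real obstacle in this argument; the only points that genuinely need checking are the well-definedness of the three displayed formulas, i.e.\ that $R_\lambda(X,Y)$, $[\lambda(X),U]_{A|_{\mathcal{L}_x}}$ and $[U,V]_{\ker\rho|_{\mathcal{L}_x}}$ all take values in $\ker\rho|_{\mathcal{L}_x}$, and each follows at once from $\rho$ being a morphism of Lie algebroids together with $\ker\rho|_{\mathcal{L}_x}$ being an ideal of $A|_{\mathcal{L}_x}$. The write-up should therefore be short; the only mild nuisance is keeping the embedding $i:\ker\rho|_{\mathcal{L}_x}\hookrightarrow A|_{\mathcal{L}_x}$ consistent throughout, so that each term is placed in the correct summand of $T\mathcal{L}_x\oplus \ker\rho|_{\mathcal{L}_x}$.
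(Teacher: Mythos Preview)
Your proof is correct and is exactly the natural transport-of-structure argument one would expect here. In the paper this lemma is stated without proof, as a routine verification; your write-up fills in precisely the expected computation, and the key point you isolate---that $R_\lambda(X,Y)$, $[\lambda(X),i(U)]_{A}$ and $[i(U),i(V)]_{A}$ all lie in $\ker\rho|_{\mathcal{L}_x}$ because $\rho$ is a Lie algebroid morphism---is indeed the only thing that needs checking.
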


Let us first look at two special cases.

\begin{Pro}
When $\alpha\in A_x^*$ such that $i^*\alpha=0$, the symplectic leaf $\mathcal{O}_{\alpha}$  is symplectically diffeomorphic to $T^*\mathcal{L}_x$  with the canonical symplectic structure. 
\end{Pro}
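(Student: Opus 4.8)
The plan is to deduce this from Theorem~\ref{main} together with the description of the symplectic leaf as an affine coadjoint orbit in \eqref{sym leaf}. Since $i^*\alpha=0$, the coadjoint orbit $\mathcal{S}_{i^*\alpha}$ in $(\ker\rho_x)^*$ is a single point (the origin), so the associated bundle $r^{-1}(x)^\sharp\times_{G_x}\mathcal{S}_{i^*\alpha}$ collapses to $T^*\mathcal{L}_x$. This immediately identifies $\mathcal{O}_\alpha$ with $T^*\mathcal{L}_x$ as a manifold, and in fact the map is nothing but $\rho^*$ composed with a translation: if $\lambda:T\mathcal{L}_x\to A|_{\mathcal{L}_x}$ is a horizontal lifting, the restriction $\lambda^*:\mathcal{O}_\alpha\to T^*\mathcal{L}_x$ from the proof of Theorem~\ref{main} is now a bijection, because each fiber is the single coadjoint orbit through $\Ad^*_{b_0(x)}i^*\alpha=0$.

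The substantive part is to check that this diffeomorphism intertwines the symplectic structure on the leaf $\mathcal{O}_\alpha$ (inherited from the Lie--Poisson structure on $A^*$) with the canonical symplectic form on $T^*\mathcal{L}_x$. First I would observe that when $i^*\alpha=0$, the leaf $\mathcal{O}_\alpha$ lies entirely in the image of $\rho^*:T^*M\to A^*$ over $\mathcal{L}_x$; indeed, $i^*$ is constant equal to $0$ along the leaf by the computation in the proof of Theorem~\ref{main}, and the kernel of $i^*$ over $\mathcal{L}_x$ is exactly $\rho^*(T^*\mathcal{L}_x)$ by the dual Atiyah sequence. So $\mathcal{O}_\alpha=\rho^*(\mathcal{O})$ for some submanifold $\mathcal{O}\subset T^*\mathcal{L}_x$, and by surjectivity of $\lambda^*$ (equivalently, by \eqref{sym leaf} applied with $i^*\alpha=0$), $\mathcal{O}$ is all of $T^*\mathcal{L}_x$. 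Next, I would recall that the Lie algebroid $A|_{\mathcal{L}_x}$ is transitive, and for a transitive Lie algebroid the map $\rho^*:T^*\mathcal{L}_x\to A^*|_{\mathcal{L}_x}$ is a Poisson map onto its image when $T^*\mathcal{L}_x$ carries its canonical (symplectic, hence Poisson) structure: this is essentially because $\rho^*$ is dual to the surjective anchor and the Lie--Poisson bracket of $l_u,l_v$ pulls back correctly. Since the image $\rho^*(T^*\mathcal{L}_x)$ is then a Poisson submanifold which is a single leaf and on which $\rho^*$ is injective, $\rho^*$ is a symplectomorphism onto $\mathcal{O}_\alpha$.

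An alternative, perhaps cleaner route is to work at the level of Hamiltonian vector fields: the symplectic form on a leaf is determined by requiring that for Casimir-free functions the Hamiltonian vector fields match. On $T^*\mathcal{L}_x$ the functions $l_{\lambda(X)}$ and $p^*f$ generate, and under $\rho^*$ one has $(\rho^*)^*l_u=l_{\rho(u)}^{T^*\mathcal{L}_x}$-type identities; tracking the Poisson brackets $\{l_u,l_v\}_{A^*}=l_{[u,v]}$ against the canonical brackets on $T^*\mathcal{L}_x$, using that $\rho$ is a Lie algebroid morphism and that the $R_\lambda$-term in the bracket formula of the preceding Lemma takes values in $\ker\rho$, shows the brackets agree after restriction because the $\ker\rho$-contribution is annihilated. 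I expect the main obstacle to be bookkeeping: making precise in what sense $\mathcal{O}_\alpha$ "is" the image of $\rho^*$ and that the induced symplectic form on the leaf (in the sense of the symplectic foliation) is pulled back to the canonical one, rather than merely diffeomorphic to it; this amounts to verifying that $\rho^*$ is a Poisson embedding, which is a short but careful computation using only that $A|_{\mathcal{L}_x}$ is transitive.
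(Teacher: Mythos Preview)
Your proposal is correct. The paper states this proposition without proof, treating it as immediate from the surrounding results: Theorem~\ref{main} gives the diffeomorphism (since $\mathcal{S}_{i^*\alpha}=\{0\}$), and the symplectic identification is implicitly the degenerate case of the computation carried out in the next proposition and in Theorem~\ref{mainrevise} --- using the bracket decomposition of the preceding Lemma, the $R_\lambda$ term lies in $\ker\rho|_{\mathcal{L}_x}$ and is annihilated when paired against $i^*=0$, leaving only $\omega_{can}$. Your second approach is exactly this implicit argument.

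Your first approach is a genuinely cleaner alternative. Since $\rho:A|_{\mathcal{L}_x}\to T\mathcal{L}_x$ is a Lie algebroid morphism over the identity (anchors are bracket-preserving), its dual $\rho^*:T^*\mathcal{L}_x\to A^*|_{\mathcal{L}_x}$ is automatically a Poisson map between the Lie--Poisson structures; combined with the identification $\mathcal{O}_\alpha=\rho^*(T^*\mathcal{L}_x)$ from Theorem~\ref{main}, this yields the symplectomorphism without ever choosing a splitting $\lambda$ for the symplectic part. Two small remarks: the phrase ``$\rho^*$ composed with a translation'' is unnecessary, since $i^*\alpha=0$ already places $\alpha$ in $\mathrm{Im}\,\rho^*$; and rather than re-deriving that $\rho^*$ is Poisson, you can simply invoke the standard fact that duals of base-preserving Lie algebroid morphisms are Poisson maps, which resolves the bookkeeping you flagged at the end.
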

\begin{Pro}
If the isotropy Lie algebra bundle $\mathrm{ker} \rho |_{\mathcal{L}_x} \subset A$ is abelian, then the symplectic leaf $\mathcal{O}_\alpha$ passing $\alpha$ is symplectically diffeomorphic to $T^*\mathcal{L}_x$ with the symplectic structure defined at a point $\beta\in T_y^* \mathcal{L}_x$ by
\[\omega(\beta)=\omega_{can}(\beta)-p^*\langle R_\lambda,\Ad^*_{ b_0(x)}i^* \alpha\rangle,\qquad  \mathrm{any}\quad b_0\in \mathrm{LBis}_x\quad s.t. \quad \phi_{b_0}(x)=y.\]
Here $p:T^*\mathcal{L}_x\to \mathcal{L}_x$ is the projection and $\omega_{can}$ is the canonical symplectic structure on $T^*\mathcal{L}_x$.
\end{Pro}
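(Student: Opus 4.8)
The plan is to first invoke Theorem \ref{main} to collapse the fibre type to a point, obtaining a diffeomorphism $\mathcal{O}_\alpha\cong T^*\mathcal{L}_x$, and then to pin down the transported symplectic form by comparing Poisson brackets in the splitting $\lambda$.

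First I would observe that, $\mathcal{G}$ being $r$-connected, each isotropy group $G_y$ ($y\in\mathcal{L}_x$) is connected, hence abelian since its Lie algebra $\ker\rho_y$ is, so $G_y$ acts trivially under $\Ad$ on $\ker\rho_y$ and on $(\ker\rho_y)^*$. Therefore every coadjoint orbit in $(\ker\rho_y)^*$ is a single point, and Theorem \ref{main} says $\mathcal{O}_\alpha$ is a fibre bundle over $T^*\mathcal{L}_x$ with one-point fibres, i.e. $\lambda^*\colon\mathcal{O}_\alpha\to T^*\mathcal{L}_x$ is a diffeomorphism. Triviality of the $\Ad$-action also makes $\mu_y:=\Ad^*_{b_0(x)}i^*\alpha$ independent of the choice of $b_0\in\mathrm{LBis}_x$ with $\phi_{b_0}(x)=y$ (two choices differ by an element of $G_y$), so $y\mapsto\mu_y$ is a well-defined section $\mu\in\Gamma((\ker\rho)^*|_{\mathcal{L}_x})$; moreover, from $\nabla_XU=[\lambda(X),U]_{A|_{\mathcal{L}_x}}$, the $T\mathcal{L}_x$-connection on $\ker\rho|_{\mathcal{L}_x}$ induced by $\lambda$, one checks that $\mu$ is $\nabla$-parallel. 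Using the dual exact sequence $0\to T^*\mathcal{L}_x\xrightarrow{\rho^*}A^*|_{\mathcal{L}_x}\xrightarrow{i^*}(\ker\rho)^*|_{\mathcal{L}_x}\to 0$ together with the description (\ref{sym leaf}) of $\mathcal{O}_\alpha$, one sees that the leaf is exactly the graph of $\mu$: in the splitting $A^*|_{\mathcal{L}_x}\cong T^*\mathcal{L}_x\oplus(\ker\rho)^*|_{\mathcal{L}_x}$ dual to $\lambda$ one has $\mathcal{O}_\alpha=\{(\beta,\mu_y):\beta\in T^*_y\mathcal{L}_x,\ y\in\mathcal{L}_x\}$, and the inverse $\Phi$ of $\lambda^*|_{\mathcal{O}_\alpha}$ sends $\beta$ to $(\beta,\mu_{p(\beta)})$.

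Next I would compute the symplectic form of $\mathcal{O}_\alpha$ (the restriction of the Lie--Poisson bivector of $A^*$) and push it to $T^*\mathcal{L}_x$ via $\Phi$. By the preceding Lemma the bracket on $A|_{\mathcal{L}_x}\cong T\mathcal{L}_x\oplus\ker\rho|_{\mathcal{L}_x}$ is $[X,Y]=[X,Y]_S-R_\lambda(X,Y)$, $[U,V]=0$, $[X,U]=\nabla_XU$. Dually, along $\Phi$ the generating functions of $C^\infty(A^*)$ restrict as $l_{\lambda(X)}\mapsto P_X$, the momentum function $P_X(\beta)=\langle\beta,X\rangle$ on $T^*\mathcal{L}_x$, and $l_{i(U)}\mapsto p^*\langle\mu,U\rangle$, a basic function. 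Computing the three types of Lie--Poisson brackets and restricting to $\mathcal{O}_\alpha$: $\{l_{\lambda X},l_{\lambda Y}\}$ transports to $P_{[X,Y]_S}-p^*\langle R_\lambda(X,Y),\mu\rangle$; $\{l_{\lambda X},l_{iU}\}$ transports to $p^*\langle\mu,\nabla_XU\rangle=p^*\bigl(X\langle\mu,U\rangle\bigr)$ (here $\nabla$-parallelism of $\mu$ is used); and $\{l_{iU},l_{iV}\}$ transports to $0$. These are precisely the Poisson brackets of the functions $P_X$ and $p^*g$ for the form $\omega:=\omega_{can}-p^*\langle R_\lambda,\mu\rangle$ on $T^*\mathcal{L}_x$ (its only deviation from the canonical brackets being the curvature term in $\{P_X,P_Y\}$, entering with the sign dictated by the conventions in force). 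The $2$-form $\omega$ is nondegenerate, being $\omega_{can}$ plus a $p$-pullback, and it is closed since $d\omega=-p^*d\langle R_\lambda,\mu\rangle=0$ by the Bianchi identity for $R_\lambda$ and $\nabla\mu=0$; hence $\omega$ has a well-defined Poisson bivector, which coincides with that of $\Phi^*\omega_{\mathcal{O}_\alpha}$ because the differentials of the $P_X$ and the $p^*g$ span $T^*(T^*\mathcal{L}_x)$ at every point. Therefore $\Phi^*\omega_{\mathcal{O}_\alpha}=\omega$, which is the assertion.

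The only delicate points are the well-definedness and the $\nabla$-parallelism of $\mu$ — exactly where abelianness of $\ker\rho|_{\mathcal{L}_x}$ (via connectedness of the $G_y$) enters — and the elementary observation that momentum functions together with basic functions determine a symplectic structure on a cotangent bundle; everything else is routine bracket bookkeeping. A more conceptual route is also available: by Lemma \ref{gauge} one may replace $A|_{\mathcal{L}_x}$ by the gauge algebroid of the principal $G_x$-bundle $P:=r^{-1}(x)$, with $G_x$ abelian, so that $A^*|_{\mathcal{L}_x}\cong T^*P/G_x$; then $\mathcal{S}_\mu=\{\mu\}$ and $\mathcal{O}_\alpha=\tilde{J}^{-1}(\mu)/G_x$ by Proposition \ref{MW-reduction}, and the statement becomes the classical minimal-coupling description of the Marsden--Weinstein reduction of $T^*P$ at the character level $\mu$ (cf. \cite{MR,Weinstein}), the connection realizing the reduced form having curvature $R_\lambda$.
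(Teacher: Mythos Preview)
Your argument is essentially the paper's: invoke Theorem \ref{main} to collapse the fibre to a point and then identify the transported symplectic form by computing Lie--Poisson brackets in the splitting $\lambda$, using the bracket formulas of the preceding Lemma. The paper carries this out by evaluating $\omega$ on the Hamiltonian vector fields $\tilde X_{l_X},\tilde X_{p^*f}$ (using $dp(\tilde X_{l_X})=X$), while you phrase the same computation dually via the brackets $\{l_{\lambda X},l_{\lambda Y}\}$, $\{l_{\lambda X},l_{iU}\}$, $\{l_{iU},l_{iV}\}$; your additional observations (the $\nabla$-parallelism of $\mu$, closedness of $\omega$ via Bianchi, and the alternative minimal-coupling route through Proposition \ref{MW-reduction}) are correct refinements that the paper does not spell out.

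One caveat: your inference ``$\mathcal{G}$ $r$-connected $\Rightarrow$ each $G_y$ connected'' is not valid in general. Source-connectedness makes $r^{-1}(y)$ connected, but in the fibration $G_y\hookrightarrow r^{-1}(y)\twoheadrightarrow\mathcal{L}_y$ the group $\pi_0(G_y)$ receives a surjection from $\pi_1(\mathcal{L}_y)$, which need not vanish (e.g.\ the gauge groupoid of a connected principal $O(2)$-bundle with nontrivial monodromy has disconnected isotropy with abelian Lie algebra $\mathfrak{so}(2)$). The paper does not justify this step either --- it simply asserts ``the isotropy Lie group is abelian'' --- so this is a shared soft spot rather than a discrepancy with the paper; both proofs are really using that the $G_y$-coadjoint action on $(\ker\rho_y)^*$ is trivial, which is what makes $\mu_y$ well defined and the fibre a single point.
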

\begin{proof}
Since the isotropy Lie group is abelian, the coadjoint orbit in $(\mathrm{ker} \rho_y)^*$ degenerates to a point, so we have the symplectic leaf through $\alpha$ is diffeomorphic to $T^*\mathcal{L}_x$. Explicitly, $\mathcal{O}_\alpha$ at a point $y$
is $T_y^*\mathcal{L}_x\times \{\Ad_{b_0(x)}^*i^* \alpha\}\cong T_y^*\mathcal{L}_x$, where $b_0\in \mathrm{LBis_x}$ is any bisection such that $\phi_{b_0}(x)=y$. It is easy to see $\Ad_{b_0(x)}^* i^*\alpha$ does not depend on the choice of $b_0$ since $\mathrm{ker} \rho_y$ is abelian.

For $X,Y\in \mathfrak{X}(\mathcal{L}_x)$, denote by $l_X,l_Y\in C^\infty (T^*\mathcal{L}_x)$ the linear functions on $T^*\mathcal{L}_x$ and $\tilde{X}_{l_X},\tilde{X}_{l_Y}$ the Hamiltonian vector fields on $T^*\mathcal{L}_x$ with the new Poisson structure. Then the symplectic $2$-form $\omega$ at $\beta\in T_y^* \mathcal{L}_x$ is 
\begin{eqnarray*}
\omega(\tilde{X}_{l_X},\tilde{X}_{l_Y})(\beta)&=&\{l_X,l_Y\}(\beta)\\ &=&[X,Y]_S(\beta)-\langle R_\lambda(X,Y),\Ad_{b_0(x)}^*i^* \alpha \rangle(y)\\ &=&\omega_{can}(\tilde{X}_{l_X},\tilde{X}_{l_Y})(\beta)-(p^*\langle R_\lambda,\Ad_{b_0(x)}^*i^* \alpha\rangle)(\tilde{X}_{l_X},\tilde{X}_{l_Y})(\beta).\end{eqnarray*}
Here we have used the fact $dp(\tilde{X}_{l_X})=dp(X_{l_X})=X$.  
Actually, by definition,  we have $\tilde{X}_{l_X}-X_{l_X}$ is $C^\infty(\mathcal{L}_x)$-linear, and then belongs to $\End(T^*\mathcal{L}_x)$. This implies that it is a vertical vector field. 
This justifies the first identity.
For $f,g\in C^\infty(\mathcal{L}_x)$, noticing $dp(\tilde{X}_{p^*f})=dp(X_{p^*g})=0$, we have
\[\omega(\tilde{X}_{p^*f},\tilde{X}_{p^*g})=0=(\omega_{can}-p^*\langle R_\lambda,i^*\alpha \rangle)(\tilde{X}_{p^*f},\tilde{X}_{p^*g}).\]
and
\[\omega(\tilde{X}_{l_X},\tilde{X}_{p^*f})=\omega_{can}(X_{l_X},X_{p^*f})=(\omega_{can}-p^*\langle R_\lambda,i^*(\alpha)\rangle)(\tilde{X}_{l_X},\tilde{X}_{p^*f}).\]
We complete the proof.
\end{proof}
It is seen that when the isotropy Lie algebra is abelian, the symplectic leaf is $T^*\mathcal{L}_x$, but the symplectic structure is the canonical one plus a "magnetic" term.

In general, since a symplectic leaf $\mathcal{O}_\alpha$ passing $\alpha\in A_x^*$ only relates with $A|_{\mathcal{L}_x}$, which is isomorphic the gauge algebroid $Tr^{-1}(x)/G_x$,  the general case actually boils down to the gauge algebroid case. The following result follows from the statement in \cite{GS}.

Let $\omega_{\mathcal{S}}$ be the symplectic form on $\mathcal{S}_{i^*\alpha}$ and $q:r^{-1}(x)\times \mathcal{S}_{i^*\alpha}\to r^{-1}(x)\times_{G_x} \mathcal{S}_{i^*\alpha}$ the projection onto $G_x$-orbits. Define $\widetilde{\omega_{\mathcal{S}}}\in \Omega^2(r^{-1}(x)\times_{G_x} S_{i^*\alpha})$ as 
\[\widetilde{\omega_{\mathcal{S}}}_{[g,\mu]}(w_1,w_2)={\omega_{\mathcal{S}}}(u_1,u_2),\]
where $u_j\in T_\mu \mathcal{S}_{i^*\alpha}$ such that $dq(0,u_j)=V_{[g,\mu]}(w_j)$ for $j=1,2$. Here $V_{[g,\mu]}$ is a projection of $T_{[g,\mu]} (r^{-1}(x)\times_{G_x} \mathcal{S}_{i^*\alpha})$ to the subspace of vertical vectors.

\begin{Thm}\label{mainrevise}
The symplectic leaf passing $\alpha\in A_x^*$ is symplectically isomorphic to 
\[T^*\mathcal{L}_x\times_{\mathcal{L}_x} (r^{-1}(x)\times_{G_x} S_{i^*\alpha})=r^{-1}(x)^\sharp\times_{G_x} \mathcal{S}_{i^*\alpha}\] (as manifolds) with the symplectic structure at a point
\[\omega(\beta+[g,\mu])=\omega_{can}(\beta)-p^*\langle R_\lambda, \Ad_g^* i^*\alpha\rangle+\widetilde{\omega_{\mathcal{S}}}([g,\mu]),\quad \beta\in T^*_y \mathcal{L}_x,[g,\mu]\in r^{-1}(x)\times_{G_x} \mathcal{S}_{i^*\alpha}, g:x\to y.\]
Here $\omega_{can}$ is the canonical symplectic form on $T^*\mathcal{L}_x$ and $p$ is the projection from $T^*\mathcal{L}_x$ to $\mathcal{L}_x$.
\end{Thm}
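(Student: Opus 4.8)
The plan is to reduce the statement to the gauge-algebroid case and then transport the Marsden--Weinstein form of Proposition \ref{MW-reduction} through the splitting $\lambda$, recognising the outcome as the Sternberg--Weinstein ``universal phase space''. By Corollary \ref{3.2} the leaf $\mathcal{O}_\alpha$ depends only on $A|_{\mathcal{L}_x}$, and by Lemma \ref{gauge} the latter is isomorphic to the gauge algebroid $Tr^{-1}(x)/G_x$ of the principal $G_x$-bundle $P:=r^{-1}(x)\to\mathcal{L}_x$; hence it suffices to prove the theorem for $A=TP/G$ with $M=\mathcal{L}_x$, $G=G_x$, in which case $A^*|_{\mathcal{L}_x}=T^*P/G$ sits in the dual Atiyah sequence (\ref{Atiyah}). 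Under this identification $J(\alpha)=[p_0,\mu]$ with $p_0\in P$ over $x$ and $\mu\in\mathfrak{g}^*$ corresponding to $i^*\alpha\in(\ker\rho_x)^*$, and by Proposition \ref{MW-reduction} we have $\mathcal{O}_\alpha=\tilde{J}^{-1}(\mathcal{S}_\mu)/G$, the orbit reduced space of the cotangent lift of $G$ on $T^*P$ at the coadjoint orbit $\mathcal{S}_\mu$, carrying its canonical Marsden--Weinstein symplectic form $\omega_{\mathrm{red}}$.

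Next I would make the bundle identification explicit. The horizontal lift $\lambda:T\mathcal{L}_x\to A|_{\mathcal{L}_x}$ is exactly a principal connection on $P$, whose curvature, under the isomorphism $\ker\rho|_{\mathcal{L}_x}\cong P\times_{G_x}\mathfrak{g}$, is the form $R_\lambda$ defined before the theorem. The connection splits the dual Atiyah sequence, $T^*P/G\cong T^*\mathcal{L}_x\oplus(P\times_{G_x}\mathfrak{g}^*)$, and hence identifies $\tilde{J}^{-1}(\mathcal{S}_\mu)/G$ with $T^*\mathcal{L}_x\times_{\mathcal{L}_x}(P\times_{G_x}\mathcal{S}_\mu)$; composing with the fibrewise map $[g,\nu]\mapsto\Ad_g^*\nu$ from Theorem \ref{main} rewrites this as $T^*\mathcal{L}_x\times_{\mathcal{L}_x}(r^{-1}(x)\times_{G_x}\mathcal{S}_{i^*\alpha})=r^{-1}(x)^\sharp\times_{G_x}\mathcal{S}_{i^*\alpha}$, the manifold in the statement, consistently with the fibre-bundle isomorphism already established in Theorem \ref{main}. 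It then remains to compute $\omega_{\mathrm{red}}$ in these coordinates.

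For the last step I would pull back the canonical $1$-form of $T^*P$ along the connection-adapted trivialisation and reduce: this is the classical computation of Sternberg and of Weinstein (see \cite{GS,Weinstein,MR}), whose outcome is that $\omega_{\mathrm{red}}$ becomes the sum of the canonical form $\omega_{can}$ pulled back from $T^*\mathcal{L}_x$, the ``magnetic'' term obtained by pairing the curvature $R_\lambda$ with the fibrewise value of the moment, and the fibrewise Kirillov form $\widetilde{\omega_{\mathcal{S}}}$ along the associated $\mathcal{S}_{i^*\alpha}$-bundle as defined just before the theorem. Writing a point over $y\in\mathcal{L}_x$ as $\beta+[g,\mu]$ with $g:x\to y$, the moment value entering the magnetic term is $\Ad_g^* i^*\alpha\in(\ker\rho_y)^*$, which yields exactly the displayed formula; the $i^*\alpha=0$ and abelian cases treated above are the two extreme specialisations of this computation and serve as consistency checks.

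The main obstacle is bookkeeping rather than conceptual content. One must verify that the reduced form produced by the Sternberg--Weinstein computation is well defined on $r^{-1}(x)^\sharp\times_{G_x}\mathcal{S}_{i^*\alpha}$, i.e. independent of the representative $g$: replacing $g$ by $gh$ with $h\in G_x$ transforms $\Ad_g^* i^*\alpha$, the class $[g,\mu]$ and $R_\lambda$ equivariantly, and the magnetic and $\widetilde{\omega_{\mathcal{S}}}$ terms conspire to leave $\omega$ unchanged; similarly a change of $\lambda$ modifies $R_\lambda$ by a $\ker\rho|_{\mathcal{L}_x}$-valued term that is absorbed by a fibre-preserving symplectomorphism. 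Once the identifications of Theorem \ref{main} and Proposition \ref{MW-reduction} are fixed, checking these compatibilities, together with the signs, is precisely the ``universal phase space'' statement of \cite{GS}, so no work beyond careful unwinding is needed.
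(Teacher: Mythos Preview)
Your proposal is correct and follows essentially the same route as the paper: reduce to the gauge algebroid $Tr^{-1}(x)/G_x$ via Corollary~\ref{3.2} and Lemma~\ref{gauge}, then invoke the Sternberg--Weinstein computation from \cite{GS} for the reduced cotangent bundle. In fact the paper does not give a self-contained proof of Theorem~\ref{mainrevise} at all---it states just before the theorem that ``the general case actually boils down to the gauge algebroid case'' and that ``the following result follows from the statement in \cite{GS}''---so your outline, which spells out the identification with $\tilde{J}^{-1}(\mathcal{S}_\mu)/G$ from Proposition~\ref{MW-reduction} and the bookkeeping for the magnetic and Kirillov terms, is more detailed than what the paper provides.
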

It is shown in this theorem that the symplectic form on a symplectic leaf involves the canonical symplectic form on $T^*\mathcal{L}_x$, the symplectic form on a cadjoint orbit of the isotropy Lie group and the curvature. We also refer to \cite{MP} for a more explicit formula of the symplectic structure for the gauge algebroid case.

For the gauge algebroid $TP/G$ of a principal $G$-bundle $P$ over $M$,  a splitting is actually a connection of the principal bundle. When a connection is chosen, the fiber bundle we obtained in Theorem \ref{mainrevise}  is precisely the phase space constructed by Sternberg for a classical particle in a Yang-Mills field.  See \cite{GS} for more discussion. 

Explicitly, let $P^\sharp$ be the pull-back bundle of the principal $G$-bundle $P\to M$  along the projection $T^*M\to M$,
which is the principal $G$-bundle over $T^*M$. 

\begin{Cor}\label{associated bundle}
When a connection $\theta$ is chosen, for $\alpha\in T^*P/G$ such that $J(\alpha)=[p,\mu]$, we have that the symplectic leaf $\mathcal{O}_\alpha$  is diffeomorphic to the associated bundle $P^\sharp\times_G \mathcal{S}_\mu$. Moreover, we can use the connection  to put a symplectic structure on $P^\sharp\times_G \mathcal{S}_\mu$ to make the above isomorphism a symplectically isomorphism. 
\end{Cor}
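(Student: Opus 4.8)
The plan is to obtain this corollary as the special case $A = TP/G$ of Theorems \ref{main} and \ref{mainrevise}, after unwinding the identifications intrinsic to the Atiyah setup. First I would record three facts. (i) The gauge algebroid $TP/G$ is transitive, so the algebroid leaf through any $x\in M$ is all of $M$; hence $\mathcal{L}_x = M$ and $T^*\mathcal{L}_x = T^*M$. (ii) For the gauge groupoid $\mathcal{G} = (P\times P)/G$, the source fiber $r^{-1}(x)$, viewed as a principal $G_x$-bundle over $\mathcal{L}_x = M$, is isomorphic to $P$ itself: fixing $p\in P$ over $x$, the map $[q,p]\mapsto q$ is such an isomorphism, intertwining the identification $G_x\cong G$ and the projection $l$ with $\pi\colon P\to M$. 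Pulling back along $T^*M\to M$ therefore identifies $r^{-1}(x)^\sharp$ with $P^\sharp$. (iii) The isotropy Lie algebra $\ker\rho_x$ of $TP/G$ is the fiber at $x$ of the adjoint bundle $P\times_G\mathfrak{g}$; the same choice of $p$ gives $\ker\rho_x\cong\mathfrak{g}$, and under it the map $i_x^*\colon(T^*P/G)_x\to(\ker\rho_x)^*$ is precisely the map $J$ of the dual Atiyah sequence \eqref{Atiyah}. Consequently, if $J(\alpha) = [p,\mu]$ then $i_x^*\alpha$ corresponds to $\mu$, so $\mathcal{S}_{i^*\alpha}\cong\mathcal{S}_\mu$.

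Granting these, the diffeomorphism asserted in the first sentence of the corollary is immediate: Theorem \ref{main} (or the last display of its proof) gives $\mathcal{O}_\alpha\cong r^{-1}(x)^\sharp\times_{G_x}\mathcal{S}_{i^*\alpha}$, and by (ii) and (iii) the right-hand side is $P^\sharp\times_G\mathcal{S}_\mu$.

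For the symplectic statement I would note that a splitting $\lambda$ of the Atiyah sequence $0\to P\times_G\mathfrak{g}\to TP/G\to TM\to 0$ is exactly the horizontal distribution of a principal connection $\theta$ on $P$, and that the $\ker\rho|_{\mathcal{L}_x}$-valued $2$-form $R_\lambda$ appearing in Theorem \ref{mainrevise} is then the curvature $2$-form of $\theta$. Transporting the symplectic form produced by Theorem \ref{mainrevise} along the identifications of the first paragraph endows $P^\sharp\times_G\mathcal{S}_\mu$ with the $2$-form $\omega_{can} - p^*\langle R_\lambda,\Ad_g^*\mu\rangle + \widetilde{\omega_{\mathcal{S}}}$, which is exactly Sternberg's form for a classical particle in a Yang--Mills field (cf. \cite{GS}); by construction the identification $\mathcal{O}_\alpha\cong P^\sharp\times_G\mathcal{S}_\mu$ is then a symplectomorphism.

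The only real work is bookkeeping. The main thing to be careful about is that the auxiliary choices --- the point $p$ over $x$, and in the fiberwise formula an arrow $g\colon x\to y$ --- enter all of the associated-bundle identifications consistently, so that the final objects and the $2$-form are well defined independently of these choices; this is where most of the (routine) verification lies. One also has to check that $R_\lambda$ is literally the connection curvature, which is a direct comparison of definitions. There is no new geometric content beyond Theorems \ref{main} and \ref{mainrevise}: the corollary is their translation into the language of principal bundles, matched against the construction in \cite{GS}.
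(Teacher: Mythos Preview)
Your proposal is correct and matches the paper's approach exactly: the paper states this corollary without proof, treating it as the immediate specialization of Theorems \ref{main} and \ref{mainrevise} to the gauge algebroid $A=TP/G$, after the preceding remark that a splitting of the Atiyah sequence is precisely a principal connection (with $R_\lambda$ its curvature). Your identifications (i)--(iii) are precisely the bookkeeping needed to translate $r^{-1}(x)^\sharp\times_{G_x}\mathcal{S}_{i^*\alpha}$ into $P^\sharp\times_G\mathcal{S}_\mu$, and the paper likewise defers the symplectic formula to \cite{GS}.
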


\begin{Ex}
 If $P=M\times G$ is the trivial principal $G$-bundle, then we have the gauge algebroid $TP/G=TM\times \mathfrak{g}$ and $T^*P/G=T^*M\times \mathfrak{g}^*$. So the symplectic leaf passing $(x,\mu)\in M\times \mathfrak{g}^*$ is $T^*M\times \mathcal{S}_\mu$ with the symplectic structure $p_1^*\omega_{can}+p_2^* \omega_{\mathcal{S}}$, where $\mathcal{S}_\mu$ is the coadjoint orbit passing $\mu$ of the Lie group $G$ on $\mathfrak{g}^*$ and $p_i$ is the projection of $T^*M\times \mathcal{S}_\mu$ to the $i$-th component.
\end{Ex}



\subsection{Examples}

Let us look at the symplectic leaves of the Lie-Poisson structure on $A^*$ for some familiar Lie algebroids. 

Let $A=(T^*_\pi M,\pi^\sharp, [\cdot,\cdot]_{\pi})$ be the Lie algebroid associated to a Poisson manifold $(M,\pi)$. Its anchor is $\pi^\sharp: T^*_\pi M\to TM$ and  the Lie bracket is given by
\[[\gamma,\gamma']_{\pi}=\mathcal{L}_{\pi^\sharp(\gamma)} \gamma'-\mathcal{L}_{\pi^\sharp(\gamma')} \gamma-d\pi(\gamma,\gamma'),\qquad \gamma,\gamma'\in \Omega^1(M).\]
Observe that the isotropy Lie algebra $\mathrm{ker}\pi^\sharp|_x$ is abelian.
The tangent bundle $TM$ is equipped with  a Lie-Poisson structure as the dual of $T^*_\pi M$, which is known as the tangent lift of the Poisson structure on $M$; see \cite{GU} for details.  
\begin{Pro}
With the above notations, the symplectic leaf passing through $\alpha\in T_x M$ is $T\mathcal{L}_x$, where $\mathcal{L}_x$ is the symplectic leaf passing $x\in M$ in the Poisson manifold $M$. 
\end{Pro}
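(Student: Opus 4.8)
The plan is to specialise Theorem~\ref{main} to $A=T^*_\pi M$, whose anchor is $\pi^\sharp$. Two facts are needed at the outset: the groupoid orbit (algebroid leaf) through $x$ integrates $\mathrm{Im}\,\pi^\sharp$, hence is exactly the symplectic leaf $\mathcal{L}_x$ of $(M,\pi)$; and, as recorded just before the statement, the isotropy Lie algebra $\ker\pi^\sharp_x=(T_x\mathcal{L}_x)^\circ$ is abelian, so every coadjoint orbit $\mathcal{S}_{i^*\alpha}\subset(\ker\pi^\sharp_x)^*$ is a single point. Feeding this into Theorem~\ref{main} (or Theorem~\ref{mainrevise}), the associated bundle degenerates: $\mathcal{O}_\alpha\cong r^{-1}(x)^\sharp\times_{G_x}\mathcal{S}_{i^*\alpha}=r^{-1}(x)^\sharp/G_x$, which is just the base $T^*\mathcal{L}_x$ of the principal $G_x$-bundle $r^{-1}(x)^\sharp\to T^*\mathcal{L}_x$. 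Thus $\mathcal{O}_\alpha$ is diffeomorphic to $T^*\mathcal{L}_x$.

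It then remains to see why $T^*\mathcal{L}_x$ appears here as $T\mathcal{L}_x$. Dualising $0\to\ker\pi^\sharp|_{\mathcal{L}_x}\xrightarrow{i}A|_{\mathcal{L}_x}\xrightarrow{\pi^\sharp}T\mathcal{L}_x\to0$ gives $0\to T^*\mathcal{L}_x\xrightarrow{\rho^*}TM|_{\mathcal{L}_x}\xrightarrow{i^*}(\ker\pi^\sharp)^*|_{\mathcal{L}_x}\to0$ with $A^*=TM$, and by skew-symmetry of $\pi$ the map $\rho^*$ coincides, up to sign, with $\pi^\sharp$; hence its image is precisely the subbundle $T\mathcal{L}_x\subset TM|_{\mathcal{L}_x}$ and $\rho^*\colon T^*\mathcal{L}_x\xrightarrow{\ \sim\ }T\mathcal{L}_x$ is the isomorphism given (up to sign) by the leafwise symplectic form $\omega_{\mathcal{L}_x}$. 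Tracing $\mathcal{O}_\alpha$ through~\eqref{sym leaf}, over each $y\in\mathcal{L}_x$ the slice $\mathcal{O}_\alpha\cap T_yM$ is the affine subspace $\Ad^*_{j^1_x b}\alpha+T_y\mathcal{L}_x$ --- a single affine plane, since abelian isotropy forces the $i^*_y$-images of all the $\Ad^*_{j^1_x b}\alpha$ to agree --- which passes through the origin exactly when $i^*\alpha=0$, i.e. $\alpha\in T_x\mathcal{L}_x$. So $\mathcal{O}_\alpha$ is an affine subbundle of $TM|_{\mathcal{L}_x}$ modelled on $T\mathcal{L}_x$, hence diffeomorphic to $T\mathcal{L}_x$, and literally equal to $T\mathcal{L}_x$ when $\alpha$ is tangent to $\mathcal{L}_x$.

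The manifold-level statement is thus immediate from Theorem~\ref{main}; the one point that requires genuine computation --- and the main obstacle, if one wants the full symplectic assertion --- is to check that, under the identification $\rho^*\colon T^*\mathcal{L}_x\xrightarrow{\sim}T\mathcal{L}_x$, the symplectic form $\omega_{can}-p^*\langle R_\lambda,\Ad^*_{b_0(x)}i^*\alpha\rangle$ produced by Theorem~\ref{mainrevise} (equivalently the preceding Proposition on abelian isotropy) is precisely the tangent lift of $\omega_{\mathcal{L}_x}$ in the sense of \cite{GU}. Here one uses $d\omega_{\mathcal{L}_x}=0$ to see that the curvature term contributes no extra magnetic defect beyond the tangent lift; in the nondegenerate case this is vacuous since then $\ker\pi^\sharp=0$, and in general it is the only part of the argument that is not a formal consequence of the two exact sequences and \eqref{sym leaf}.
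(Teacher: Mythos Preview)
Your argument is correct but takes a genuinely different route from the paper's. The paper does not invoke Theorem~\ref{main} at all: instead it computes the Hamiltonian vector fields on $TM$ directly, showing that $X_{l_{df}}=X_f^c$ (the complete lift) and $X_{p^*f}=X_f^\vee$ (the vertical lift) for $f\in C^\infty(M)$, so that the characteristic distribution of the tangent Poisson structure is spanned pointwise by $\{X^\vee,X^c:X\in\mathrm{Ham}(M)\}$; since vertical and complete lifts of vector fields tangent to $\mathcal{L}_x$ span $T(T\mathcal{L}_x)$, the leaf is $T\mathcal{L}_x$. Your approach, by contrast, feeds the abelian-isotropy observation into the fiber-bundle description of Theorem~\ref{main} to collapse the fiber $\mathcal{S}_{i^*\alpha}$ to a point, obtaining $\mathcal{O}_\alpha\cong T^*\mathcal{L}_x$, and then identifies this with $T\mathcal{L}_x$ inside $TM|_{\mathcal{L}_x}$ via $\rho^*=-\pi^\sharp$. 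The paper's method is more hands-on and independent of the structure theorem, and it yields the explicit Hamiltonian flows on $TM$; yours is more structural and, incidentally, more careful about the caveat that the leaf is \emph{literally} $T\mathcal{L}_x$ only when $\alpha\in T_x\mathcal{L}_x$ (otherwise an affine translate). Your final paragraph on matching the symplectic form with the tangent lift of $\omega_{\mathcal{L}_x}$ goes beyond what the paper's proof establishes here, which is only the set-theoretic identification.
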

\begin{proof}
We claim that the characteristic distribution generated by  Hamiltonian vector fields on $TM$ is actually generated by the family of \[\{X^\vee,X^c;X\in \mathrm{Ham}(M)\}\]  the vertical lift and complete lift of Hamiltonian vector fields on $M$. 
In fact,  for $f\in C^\infty(M)$, denote by $X_f$ the Hamiltonian vector field on $M$. Then we claim that the Hamiltonian vector fields  of $l_{df}\in C^\infty(TM)$ of the $1$-form $df\in \Omega^1(M)$ and $p^*f\in C^\infty(TM)$ on $TM$ are respectively
\[X_{l_{df}}=d X_f=X_f^c,\qquad X_{p^*f}=X_f^\vee.\]
Actually, since the flow of $dX_f\in \mathfrak{X}(TM)$ is $d\phi_t^{X_f}$, the tangent map of the flow of $X_f$ on $M$, we get
\[dX_f(dg) (Y_x)=\frac{d}{dt}|_{t=0}\langle dg, d\phi_t^{X_f}(Y_x)\rangle=\frac{d}{dt}|_{t=0}\langle (d\phi_t^{X_f})^*(dg),Y_x\rangle=\langle \mathcal{L}_{X_f}(dg),Y_x\rangle=d\{f,g\}(Y_x),\]
and \[(dX_f)(p^*g)(Y_x)=\frac{d}{dt}|_{t=0}p^*g(d\phi_t^{X_f}(Y_x))=\frac{d}{dt}|_{t=0}g(\phi_t^{X_f}(x))\circ p=p^*\{f,g\}.\]
Thus we get
\begin{eqnarray*}
X_{l_{df}}(dg)=\{df,dg\}=d\{f,g\}= dX_f (dg),\qquad X_{l_{df}}(p^*g)=p^*\{f,g\}=(dX_f)(p^*g).
\end{eqnarray*}
Namely, $X_{l_{df}}=dX_f$. To see $X_{p^*f}=X_f^\vee$, we have 
\[\langle X_{p^*f} (Y_x),dg\rangle=\langle p^*\{f,g\}, Y_x\rangle=\{f,g\}(x),\]
and 
\[\langle X^\vee_f (Y_x),dg\rangle=\frac{d}{dt}|_{t=0}\langle Y_x+tX_f(x),dg\rangle=\{f,g\}(x).\]
It is clear that both of $X_{p^*f}$ and $X_f^\vee$ are zero on $p^*g$, so they are equal. 
Note that vector fields on $T\mathcal{L}_x$ are generated by the vertical and complete lift of vector fields on $\mathcal{L}_x$. We get the result.
\end{proof}

Consider the Lie-Poisson manifold $\mathfrak{g}^*$ on the dual of  a Lie algebra $\mathfrak{g}$. We have the Lie algebroid structure  on $T^*\mathfrak{g}^*=\mathfrak{g}\triangleright \mathfrak{g}^*\to \mathfrak{g}^*$, which is the transformation algebroid relative to the coadjoint action of $\mathfrak{g}$ on $\mathfrak{g}^*$. 
Thus we get the tangent Poisson structure on $T\mathfrak{g}^*$. We refer to \cite[Example 1]{GU} for  the symplectic leaves of $T\mathfrak{g}^*$ when $\mathfrak{g}=\mathfrak{so}_3(\mathbbm{R})$.
\begin{Ex}
Let $I$ be a Lie algebra bundle over $M$. Then the symplectic leaf passing $\alpha\in I_x^*$ is $\mathcal{S}_\alpha$, the coadjoint orbit in the dual of the Lie algebra $I_x$.
\end{Ex}

\begin{Ex}
Let $A=F$ be a regular integrable distribution of a manifold $M$. By Theorem \ref{main2}, the symplectic leaf passing one point $\alpha\in F^*_x$ is \[F^*|_{\mathcal{L}_x}=T^*M/(T\mathcal{L}_x)^\perp\cong T^*\mathcal{L}_x,\]where $\mathcal{L}_x$ is the leaf passing $x$ of the distribution.  If $M $ is a product manifold $N\times P$ and $A=\pi^*(TN)$, where $\pi$ is the projection from $M$ to $N$, then the symplectic leaf passing through $(n,p)\in M$ is $T^*N \times \{p\}$.

For the same reason, if the anchor of a Lie algebroid $A$ is injective, then the symplectic leaf passing $\alpha\in A^*_x$ is $A^*|_{\mathcal{L}_x}=\rho^*(T^*\mathcal{L}_x)$.
\end{Ex}
\begin{Ex}
Let $A$ be the transformation Lie algebroid $\mathfrak{g} \triangleright M$ with the Lie algebra $\mathfrak{g}$ left acting on $M$. For $x\in M$, a local bisection is characterized by a smooth function $F: x\in U\to G$ such that the induced map $f: U\to f(U); f(x)=F(x)x$ is a diffeomorphism. 

For $(u,x)\in \mathfrak{g}\times M$ and a local bisection $b(x)=(F(x),x)$, we have
\[\Ad_{j_x^1 b} (u,x)=\Ad_{F(x)}u+dR_{F(x)^{-1}} dF(\hat{u}),\]
where $\hat{u}\in \mathfrak{X}(M)$ is the fundamental vector field of $u$. By definition, we have $b^{-1}(x)=(F(f^{-1}(x))^{-1},x)$. We write $\tilde{F}:f(U)\to G$ as $\tilde{F}(x)=F(f^{-1}(x))^{-1}$.

For the Lie-Poisson manifold $A^*=\mathfrak{g}^*\times M$, by (\ref{sym leaf}), the symplectic leaf  passing a point $(\mu,x)\in A^*$ is \[\{F(x)x,\Ad_{F(x)^{-1}}^*\mu+\rho^*(d\tilde{F})^*(dR_{F(x)})^*\mu+\rho^*(df^{-1})^*\gamma), \forall \gamma\in T_{x}^* M, F\in \mathrm{LBis_x}(M\times G)\},\]
where $\rho: \mathfrak{g}\times M\to TM$ is the anchor map given by the action.

In particular, the symplectic leaf passing $(0,x)$ is 
\begin{eqnarray*}
\mathcal{O}_{(0,x)}&=&\{(F(x)x,\rho^*(df^{-1})^* \gamma), \forall \gamma\in T_x^* M, F\in \mathrm{LBis_x}(M\times G)\}=\mathrm{Im} \rho^*|_{\mathcal{L}_x}=\rho^*(T^*\mathcal{L}_x),
\end{eqnarray*}
where  $\mathcal{L}_x$ is the action orbit of $G$ on $M$ through $x$.



If the $G$-action on $M$ is trivial, then the symplectic leaf is $\mathcal{S}_\mu\times \{x\}$, where $\mathcal{S}_\mu$ is the coadjoint orbit of $G$ passing $\mu\in \mathfrak{g}^*$ with the known symplectic structure. If this action is free, then the symplectic leaf is $ \mathfrak{g^*}\times \mathcal{L}_x$, which is symplectically diffeomorphic to $T^*\mathcal{L}_x$ with the canonical symplectic form by the map $\rho^*:T^*\mathcal{L}_x\to \mathfrak{g}^*\times \mathcal{L}_x$.

\end{Ex}

\section{The symplectic groupoid $T^*\mathcal{G}$ over $A^*$ and symplectic leaves}
In this section, we calculate the groupoid orbits of the symplectic groupoid $T^*\mathcal{G}\rightrightarrows A^*$, which turns out to be exactly the affine coadjoint orbits of $\mathcal{J}\mathcal{G}\ltimes T^*M$ on $A^*$ and thus the symplectic leaves on $A$*. This recovers a general result of symplectic groupoids for this special case \cite{CDW} .

For a symplectic groupoid with connected fibers, the base manifold comes equipped with a Poisson structure.  The groupoid orbits, i.e., the singular foliation where you identify points which are the source and target of the same element of the Lie groupoid,  and the symplectic leaves of the Poisson structure on the base manifold coincide \cite{MW, Mackenzie}.


For a Lie algebroid $A$ with Lie groupoid $\mathcal{G}$, the cotangent bundle $T^*\mathcal{G}$ is a symplectic Lie groupoid over $A^*$ and the induced Poisson structure on $A^*$ is exactly the Lie-Poisson structure.  

Let us first recall the groupoid structure on $T^*\mathcal{G}$. As $T_x\mathcal{G}=T_xM \oplus A_x$, any $\alpha\in A^*_x$ corresponds to an element $i(\alpha)\in T^*_x \mathcal{G}$ determined by
\[\langle i(\alpha), Z+u\rangle=\langle \alpha, u\rangle,\qquad \forall Z\in T_xM, u\in A_x.\]
The source of an element $\xi\in T^*_g\mathcal{G}$ with $g\in \mathcal{G}$ is
\[\langle r(\xi), u\rangle=\langle \xi, d L_g(u-\rho(u))\rangle,\quad \quad \forall u\in A_{r(g)}.\]
Note that here we treat $A=\mathrm{ker} dr_{\mathcal{G}}|_M$ and 
$u\mapsto -\mathrm{inv}(u)=u-\rho(u)$ is a map $\mathrm{ker} dr_{\mathcal{G}}|_x\to \mathrm{ker} dl_{\mathcal{G}}|_x$. This makes sense of the left translation.
The target is 
\[\langle l(\xi),v\rangle=\langle \xi, d R_g(v)\rangle,\quad \quad \forall v\in A_{l(g)}.\] For a multiplicable pair $(g,h)\in \mathcal{G}^{(2)}$, if $\xi\in T_g^*\mathcal{G}$ and $\eta\in T_h^*\mathcal{G}$ are multiplicable, then the product is the element $\xi\times_{T^*\mathcal{G}} \eta\in T^*_{gh}\mathcal{G}$ such that
\begin{eqnarray}\label{multi}
(\xi\times_{T^*\mathcal{G}} \eta)(Tm(X,Y))=\xi(X)+\eta(Y),
\end{eqnarray}
for $X\in T_g \mathcal{G}$ and $Y\in T_h \mathcal{G}$ such that $dr_{\mathcal{G}}(X)=dl_{\mathcal{G}}(Y)$.

\begin{Rm}
The source and target maps of the Lie groupoid $T^*\mathcal{G}\rightrightarrows A^*$ are actually the dual of the following two inclusions
\[A\hookrightarrow T\mathcal{G},\quad u\to \overrightarrow{u};\quad \quad A\hookrightarrow T\mathcal{G}, \quad u\to \overleftarrow{u},\] 
of which $\Gamma(A)$ is seen as spaces of left and right vector fields on $\mathcal{G}$ respectively. 

To show the multiplication (\ref{multi}) is well-defined, we first note that $Tm:T\mathcal{G}\times_{TM} T\mathcal{G}\to T\mathcal{G}$ is surjective. It suffices to check that if $Tm(X,Y)=0$, then $\xi(X)+\eta(Y)=0$ for a multiplicable pair $(\xi,\eta)$. Actually, the condition $Tm(X,Y)=0$ for $X\in T_g G$ and $Y\in T_h Y$ implies that $X=-d L_g \mathrm{inv} (u)$ and $Y=d R_{h} (u)$ for some $u\in A_{r(g)}$. It is direct to see that $\xi(X)+\eta(Y)=0$ is equivalent to the multiplicable condition $r(\xi)=l(\eta)$.
\end{Rm}
\begin{Ex}\label{symp groupoid}
We have $T^*G\cong G\triangleright_{\Ad^*} \mathfrak{g}^*\rightrightarrows \mathfrak{g}^*$, which is the transformation groupoid with respect to the coadjoint action of $G$ on $\mathfrak{g}^*$; and \[T^*(M\times M)=T^*M\times T^*M\rightrightarrows T^*M,\]
where the structure maps are given by
\[r(\gamma,\gamma')=-\gamma', \qquad l(\gamma,\gamma')=\gamma,\qquad i(\gamma)=(\gamma,-\gamma),\qquad (\gamma,\gamma')(-\gamma',\gamma'')=(\gamma,\gamma'').\]
For the transformation groupoid $G\triangleright M$, we have
\[T^*(G\triangleright M)=T^*G\times T^*M\rightrightarrows  \mathfrak{g}^*\times M,\]
whose groupoid structures are as follows. The source and target maps are
\[l(\alpha,\gamma)=(R_h^*\alpha,hx),\qquad r(\alpha,\gamma)=(L_h^*\alpha-\rho^*(\gamma),x),\qquad \alpha\in T_h^* G,\gamma\in T_x^* M,\]
where $\rho^*: T^*M\to \mathfrak{g}^*\times M$ is the dual map of the action (anchor).
And for a multiplicable pair $(\beta,\gamma')\in T_{g}^*G\times T_{hx}^* M$ and $(\alpha,\gamma)\in T_h^* G\times T_x^*M$, their product is 
\[(\beta,\gamma')\cdot (\alpha,\gamma)=(R_{h^{-1}}^* \beta,L_h^* \gamma'+\gamma)\in T_{gh}^*G\times T_x^*M.\]
\end{Ex}


The groupoid orbits of $T^*\mathcal{G}$ on $A^*$ are also the orbits of the action of  $T^*\mathcal{G}$  on $A^*$ by $\xi\cdot \alpha=l(\xi)$ when $\alpha=r(\xi)$. The symplectic groupoid looks quite different from the first jet groupoid of a Lie groupoid from  Example \ref{jet groupoid} and \ref{symp groupoid}, but we shall prove that they have the same orbits on $A^*$, which are the symplectic leaves.

\begin{Thm}\label{symplectic leaf}
The groupoid orbits of the symplectic groupoid $T^*\mathcal{G}$ on $A^*$
coincide with the orbits of the affine coadjoint action of $\mathcal{J}\mathcal{G}\ltimes T^*M$ on $A^*$.
\end{Thm}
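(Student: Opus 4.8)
The plan is to prove the equality of the two foliations by showing that each groupoid orbit of $T^*\mathcal{G}$ on $A^*$ is contained in an affine coadjoint orbit of $\mathcal{J}\mathcal{G}\ltimes T^*M$ and conversely; since both are connected it suffices to identify their infinitesimal counterparts, i.e.\ to show that the tangent distributions generated by the two groupoid actions agree at every point $\alpha\in A_x^*$. By Theorem \ref{main2}(2) the affine coadjoint orbit through $\alpha$ is the symplectic leaf of the Lie--Poisson structure on $A^*$ through $\alpha$, so in effect I would show that the groupoid orbit of $T^*\mathcal{G}$ through $\alpha$ is that same symplectic leaf. This is a special case of the general theorem on symplectic groupoids recalled at the start of the section (the orbits of a symplectic groupoid are the symplectic leaves of the induced Poisson structure on the base; see \cite{MW, Mackenzie, CDW}), and since $T^*\mathcal{G}$ induces precisely the Lie--Poisson structure on $A^*$, the result follows. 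However, to keep the paper self-contained I would prefer an explicit computation.

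First I would describe the tangent space to the $T^*\mathcal{G}$-orbit at $\alpha\in A_x^*$ directly. The orbit through $\alpha$ is $\{\, l(\xi) : \xi\in T^*\mathcal{G},\ r(\xi)=\alpha\,\}$, so I would fix $g\in \mathcal{G}$ with $r(g)=x$ and let $\xi$ range over the fiber $T_g^*\mathcal{G}$ subject to $r(\xi)=\alpha$; the set of such $\xi$ is an affine subspace of $T_g^*\mathcal{G}$ modeled on the annihilator of $\{dL_g(u-\rho(u)):u\in A_x\}$, i.e.\ on $(\mathrm{im}\, dL_g|_{\ker dr})^{\circ}$, which under $dR_{g^{-1}}$ corresponds to $\rho^*(T^*_{l(g)}M)\subset A^*_{l(g)}$ via the target formula $\langle l(\xi),v\rangle=\langle\xi,dR_g(v)\rangle$. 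Varying $g$ over $r^{-1}(x)$ then sweeps out, via the adjoint action, the jet-groupoid orbit $\{\Ad^*_{j_x^1 b}\alpha\}$ (using Proposition \ref{CSS} and Lemma \ref{gauge} to identify $dR_{g^{-1}}\circ(\text{transport along }g)$ with $\Ad^*$), while the freedom in $\xi$ for fixed $g$ adds precisely the vertical translations by $\rho^*(d\phi_b^{-1})^*\gamma$, $\gamma\in T^*_xM$. Matching this against the description \eqref{sym leaf} of $\mathcal{O}_\alpha$ gives the inclusion of the $T^*\mathcal{G}$-orbit in $\mathcal{O}_\alpha$; the reverse inclusion is obtained by running the same identifications backwards, exhibiting for each pair $(j_x^1 b,\gamma)$ an explicit $\xi\in T^*\mathcal{G}$ with $r(\xi)=\alpha$ and $l(\xi)=(j_x^1 b,\gamma)\triangleright\alpha$.

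Alternatively, and perhaps more cleanly, I would argue at the level of the source map: the source $r:T^*\mathcal{G}\to A^*$ is (by the Remark preceding this theorem) the dual of the inclusion $A\hookrightarrow T\mathcal{G}$, $u\mapsto\overrightarrow u$, so its fibers are cotangent fibers of $\mathcal{G}$ and the orbit foliation is the image under $l$ of these fibers; comparing with the analogous picture for $\mathcal{J}\mathcal{G}\ltimes T^*M$, whose infinitesimal action is \eqref{affine action}, reduces everything to checking that the spans of $\{[u,\cdot]_A,\ \rho^*(D),\ \rho^*(\gamma)\}$ and the span of the fundamental vector fields of $T^*\mathcal{G}$ agree pointwise on $A^*$ — and by the argument in the proof of Theorem \ref{main2} both already equal the span of all Hamiltonian vector fields of the Lie--Poisson structure at that point.

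The main obstacle I anticipate is bookkeeping the left/right translations and the anchor twists correctly: the source map of $T^*\mathcal{G}$ involves the awkward map $u\mapsto u-\rho(u)$ from $\ker dr$ to $\ker dl$ and a left translation, whereas the target uses a right translation, so tracking how $dR_{g^{-1}}$ composed with these produces exactly $\Ad^*_{j_x^1 b}$ plus the vertical $T^*M$-correction requires care — in particular showing that the choice of $g$ in a source fiber, versus the choice of bisection $b$ with $b(x)=g$, introduces only the jet-dependence already encoded in $j_x^1 b$. Once this dictionary is set up using Proposition \ref{CSS}, the coincidence of orbits is immediate from \eqref{sym leaf}. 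I would phrase the final proof as: both orbit sets are connected immersed submanifolds with the same tangent distribution (the Hamiltonian distribution of the Lie--Poisson structure), hence coincide.
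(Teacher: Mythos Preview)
Your proposal is correct, and the explicit computation you sketch in your second paragraph is exactly what the paper does: choose a local bisection $b$ through $g$, write $\xi=R_{b^{-1}}^*\alpha-R_{b^{-1}}^*\gamma$ for some $\gamma\in T_y^*M$ (this is your observation that the $\xi$'s with fixed target $\alpha$ form an affine space modeled on $\rho^*(T^*_yM)$), and then compute $r(\xi)=\Ad_{b^{-1}}^*\alpha+\rho^*(\Ad_{b^{-1}}^*\gamma)$ directly, matching against \eqref{sym leaf}. The ``bookkeeping obstacle'' you flag is precisely what the paper works through, using $L_g(u-\rho(u))=L_bu-L_b\rho(u)$ and the fact that $\Ad_b$ preserves $A$ and $TM$.

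Where you diverge is in your stated \emph{preferred} route: the paper stays entirely at the global level, establishing a pointwise bijection between the two orbits (the formula for $\xi$ in terms of $(b,\gamma)$ is explicitly reversible), whereas you lean toward the infinitesimal argument---showing both tangent distributions equal the Hamiltonian distribution and invoking connectedness. Both are valid; the paper's global calculation has the virtue of producing an explicit dictionary $(j_x^1b,\gamma)\leftrightarrow\xi$ without ever passing to tangent spaces, while your infinitesimal version would be shorter (since the Hamiltonian-distribution comparison is already done in the proof of Theorem~\ref{main2}) but relies on the connectedness of $T^*\mathcal{G}$-orbits, which in turn needs the $r$-connectedness hypothesis on $\mathcal{G}$. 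Your first shortcut---citing the general symplectic-groupoid fact \cite{CDW,MW}---is also legitimate, and indeed the paper remarks that this theorem recovers that general result in this special case; but the paper's point is to give the explicit orbit-matching rather than to import it.
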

\begin{proof}
Two points  $\alpha\in A^*_y$ and $\beta\in A^*_x$ are in the same groupoid orbit iff there is an element $g:x\to y$ in $\mathcal{G}$ and a covector $\xi\in T_g^*\mathcal{G}$ such that $r(\xi)=\beta$ and $l(\xi)=\alpha$.


If $g: x\to y$ and $\alpha\in A_y^*$, let $b:U\subset M\to \mathcal{G}$ be a local bisection of $\mathcal{G}$ such that  $b(x)=g$. Following from
\[\langle \alpha, v\rangle=\langle l(\xi),v\rangle=\langle \xi, R_{g} (v)\rangle=\langle R_b^* \xi, v\rangle,\quad \quad \forall v\in A_y,\xi\in T_g^*\mathcal{G},\]
we have 
\[\alpha-R_b^*\xi=\gamma\]
for some $\gamma\in T_y^*M$, which implies that
 \[\xi=R_{b^{-1}}^* \alpha-R_{b^{-1}}^* \gamma.\]
Here $R_b$ is the isomorphism given by the right translation 
\[R_{b}: A_y\oplus T_y M\to T_g \mathcal{G},\qquad R_b (w)=\frac{d}{dt}|_{t=0} \phi_t^w(y) b(\phi_b^{-1}(r(\phi_t^w(y))),\]
where $\phi_t^w$ is a flow of $w$. It is seen that the inverse of $R_b$ is $R_{b^{-1}}$ and it satisfies that $R_b|_{A_y}=dR_{g}$.  For similar reason, we get 
\[L_{g}(u-\rho(u))=L_b u-L_b \rho(u),\qquad \forall u\in A_x.\]
Then we get
\[\langle \beta, u\rangle=\langle r(\xi), u\rangle=\langle R_{b^{-1}}^*\alpha-R_{b^{-1}}^*\gamma, L_b u-L_b\rho(u)\rangle=
\langle \Ad_{b^{-1}}^* \alpha,u \rangle+\langle \rho^*(\Ad_{b^{-1}}^* \gamma), u\rangle.\]
Here we have used the fact that $\Ad_b$ preserves $A$ and $TM$. This implies that
\[\beta=\Ad_{b^{-1}}^* \alpha+\rho^*(\Ad_{b^{-1}}^* \gamma)\]
for some $\gamma\in T^*_y M$.
Note that $\Ad_b^*$ only depends on the equivalent class $j_x^1 b$ of $b$ in $\mathcal{J}\mathcal{G}$ and $\Ad_{j_{\phi_b(x)}^1 b^{-1}}^* \gamma=(d\phi_b)^*\gamma$. By Theorem \ref{main2}, we finish the proof.
\end{proof}
The jet groupoid $\mathcal{J}\mathcal{G}$ coadjoint acts on $A^*$. It also left acts on  $T^*\mathcal{G}$.
A bisection $b$ of a Lie groupoid $\mathcal{G}$ left acts on $\mathcal{G}$ by $b\cdot g=b(l(g))g$. Its tangent map defines an action of $\mathcal{J}\mathcal{G}$ on $T\mathcal{G}$ with moment map $l\circ p:T\mathcal{G}\to M$:
\[L_{j_x^1 b}: T_g \mathcal{G}\to T_{b(x)g} \mathcal{G},\qquad L_{j_x^1 b}(X):=dL_{b}(X),\qquad X\in T_g G, l(g)=x,\]
where $p:T\mathcal{G}\to \mathcal{G}$ is the projection. Taking the dual, we get an action of $\mathcal{J}\mathcal{G}$ on $T^*\mathcal{G}$ with moment map $l\circ p:T^*\mathcal{G}\to M$:
\[L_{j_x^1 b}^*:T_g^* \mathcal{G}\to T_{b(x)g}^* \mathcal{G},\qquad \langle L_{j_x^1 b}^*(\xi),X\rangle:=\langle \xi, L_{j_{\phi_b(x)}^1 b^{-1}}(X)\rangle,\qquad X\in T_{b(x)g} \mathcal{G}, l(g)=x. \]
In the following lemma, we discuss the fundamental vector fields of this action.
\begin{Lem}
The fundamental vector field of $u\in \Gamma(A)$ and $D\in \Gamma(\Hom(TM,A))$ on $T^*\mathcal{G}$ are   \[\hat{u}=[\overrightarrow{u},\cdot]_{T\mathcal{G}},\qquad \hat{D}=-R\circ D\circ dl,\]
where $R:\Gamma(A)\to \mathfrak{X}(\mathcal{G})$ is the right translation and $r:\mathcal{G}\to M$ is the source map.
\end{Lem}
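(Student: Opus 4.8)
The strategy is to compute both fundamental vector fields straight from the definition of the dual action, obtaining the first from the exponential flow of $u$ and then deducing the second by $C^\infty(M)$-linearity. For $u\in\Gamma(A)$, I would use the exponential bisection $b_t:=\exp(tu)$, the local bisection with $b_0=\mathrm{id}$ and $\dot b_0=u$, characterised by the fact that left translation $L_{b_t}$ coincides with the flow $\Phi^{\overrightarrow u}_t$ of the right-invariant vector field $\overrightarrow u\in\mathfrak{X}(\mathcal{G})$ (here $\overrightarrow u(g)=dR_g(u(l(g)))$). By the defining formula $\langle L^*_{j^1_x b}\xi,X\rangle=\langle\xi,dL_{b^{-1}}X\rangle$, the action of $\mathcal{J}\mathcal{G}$ on $T^*\mathcal{G}$ restricted to the one-parameter family $t\mapsto j^1_x b_t$ is precisely the transpose of $dL_{b_t^{-1}}=d\Phi^{\overrightarrow u}_{-t}$, i.e. the cotangent lift of the flow of $\overrightarrow u$. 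Differentiating at $t=0$ therefore identifies $\hat u$ with the cotangent lift of $\overrightarrow u$; under the standard inclusion $\mathcal{D}(T\mathcal{G})\subset\mathfrak{X}((T\mathcal{G})^*)=\mathfrak{X}(T^*\mathcal{G})$ this cotangent lift is exactly the derivation $[\overrightarrow u,\cdot]_{T\mathcal{G}}$ of the vector bundle $T\mathcal{G}\to\mathcal{G}$, both vector fields being characterised by $\ell_Y\mapsto\ell_{[\overrightarrow u,Y]_{T\mathcal{G}}}$ for $Y\in\mathfrak{X}(\mathcal{G})$ (with $\ell_Y(\xi)=\langle\xi,Y\rangle$) and $p^*h\mapsto p^*(\overrightarrow u\,h)$ for $h\in C^\infty(\mathcal{G})$. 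This gives the first formula.

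For $D\in\Gamma(\Hom(TM,A))$, I would exploit that the infinitesimal action $\Gamma(\mathcal{J}A)\to\mathfrak{X}(T^*\mathcal{G})$, being induced by a vector bundle morphism $\mathcal{J}A\to T(T^*\mathcal{G})$ over the moment map $l\circ p$, is $C^\infty(M)$-linear for the module structure induced by $(l\circ p)^*$. Since $\Gamma(\Hom(TM,A))$ is locally generated by elements $df\otimes u$ ($f\in C^\infty(M)$, $u\in\Gamma(A)$) and $df\otimes u=\mathbbm{d}(fu)-f\,\mathbbm{d}u$ in $\Gamma(\mathcal{J}A)$, the first formula gives
\[\widehat{df\otimes u}=\widehat{\mathbbm{d}(fu)}-\big((l\circ p)^*f\big)\widehat{\mathbbm{d}u}=[\overrightarrow{fu},\cdot]_{T\mathcal{G}}-\big((l\circ p)^*f\big)[\overrightarrow u,\cdot]_{T\mathcal{G}}.\]
Now $\overrightarrow{fu}=(l^*f)\overrightarrow u$, and the Leibniz rule $[(l^*f)\overrightarrow u,\cdot]_{T\mathcal{G}}=(l^*f)[\overrightarrow u,\cdot]_{T\mathcal{G}}-d(l^*f)\otimes\overrightarrow u$ holds in $\mathcal{D}(T\mathcal{G})$, with $d(l^*f)\otimes\overrightarrow u\in\End(T\mathcal{G})$; using that the inclusion $\mathcal{D}(T\mathcal{G})\hookrightarrow\mathfrak{X}(T^*\mathcal{G})$ is $C^\infty(\mathcal{G})$-linear via $p^*$ (so $l^*f$ becomes $(l\circ p)^*f$), the two terms carrying the symbol $\overrightarrow u$ cancel and one is left with $\widehat{df\otimes u}=-\big(d(l^*f)\otimes\overrightarrow u\big)$. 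Finally one checks that the endomorphism $d(l^*f)\otimes\overrightarrow u$ of $T\mathcal{G}$ equals $R\circ(df\otimes u)\circ dl$ (at $g$ it sends $X$ to $\langle df,dl_g X\rangle\,dR_g(u(l(g)))$), whence $\hat D=-R\circ D\circ dl$ by linearity.

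I expect the main obstacle to be purely organisational: keeping the several dualisations consistent — the inverse built into the definition of $L^*$, the direction of the cotangent lift, and the transpose-and-sign in the identification $\End(T\mathcal{G})\subset\mathfrak{X}(T^*\mathcal{G})$ — so that the ``hidden'' inverse in $L^*$ makes $u\mapsto\hat u$ a genuine (covariant) Lie algebroid action and reproduces the stated signs. For the $D$-part one may also argue directly, computing $\frac{d}{dt}\big|_{0}L^*_{j^1_x b_t}\xi$ at $\xi\in T^*_g\mathcal{G}$ for a family $b_t$ of local bisections with $b_t(x)=1_x$ and $(db_t)_x=\mathrm{id}+tD_x$: using $(db_t^{-1})_x=\mathrm{id}-tD_x+O(t^2)$ (inversion in $\Hom(TM,A)^0$ being $\Phi\mapsto-\Phi+O(\Phi^2)$) together with the identity $dm_{(1_x,g)}(Z+a,X)=X+dR_g(a)$ for the differential of the multiplication (valid for $Z\in T_xM$, $a\in A_x$, $X\in T_g\mathcal{G}$ with $dl_g X=Z$) yields $\hat D(\xi)=-(dR_g\circ D_{l(g)}\circ dl_g)^*\xi$, the same answer.
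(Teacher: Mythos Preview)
Your proof is correct. Note, however, that the paper states this lemma without proof, so there is no ``paper's own proof'' to compare against. Your approach---using the exponential bisection $b_t=\exp(tu)$ to identify $\hat u$ with the cotangent lift of $\overrightarrow u$, and then handling $D$ via the identity $df\otimes u=\mathbbm{d}(fu)-f\,\mathbbm{d}u$ together with $C^\infty(M)$-linearity of the infinitesimal action---is exactly the technique the paper employs elsewhere (see the proofs of Lemma~\ref{infinitesimal actions} and Proposition~\ref{symp1}), so it is entirely in the spirit of the paper and is the natural argument here. The alternative direct computation you sketch at the end (differentiating $L^*_{j^1_x b_t}\xi$ for a family with $b_t(x)=1_x$, $(db_t)_x=\mathrm{id}+tD_x$) is also valid and arguably cleaner for the $D$-part, since it avoids the bookkeeping of matching the two $C^\infty$-module structures on $\mathcal{D}(T\mathcal{G})\subset\mathfrak{X}(T^*\mathcal{G})$.
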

\begin{Pro}
For any  $\mathbbm{d}u+D\in \Gamma(\mathcal{J}A)$, we have its fundamental vector field on $T^*\mathcal{G}$ is a Poisson vector field iff $D$ satisfies
\[RD[dl(X),dl(Y)]=[RDdl(X),Y]+[X,RDdl(Y)],\qquad X,Y\in \mathfrak{X}(\mathcal{G}).\]
It is a Hamiltonian vector field iff $D=0$ and the Hamiltonian function is $l_{\overrightarrow{u}}\in C^\infty(T^*\mathcal{G})$.
\end{Pro}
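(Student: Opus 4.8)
The plan is to transcribe the argument of Proposition~\ref{symp1}, starting from the two fundamental vector fields produced by the preceding Lemma: the fundamental vector field of $\mathbbm{d}u+D$ on $T^*\mathcal{G}$ is $\hat u+\hat D$ with $\hat u=[\overrightarrow u,\cdot]_{T\mathcal{G}}$ and $\hat D=-R\circ D\circ dl$. The first step is to note that $\hat u$ is the cotangent lift of $\overrightarrow u\in\mathfrak{X}(\mathcal{G})$ to $T^*\mathcal{G}$, hence --- by the standard identity $X^{c}=X_{l_{X}}$ for the canonical symplectic form, the same fact used in the proof of Theorem~\ref{main2} --- it is the Hamiltonian vector field of the fibrewise-linear function $l_{\overrightarrow u}\in C^{\infty}(T^*\mathcal{G})$. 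Thus $\hat u$ is always a Poisson (indeed Hamiltonian) vector field, so $\hat u+\hat D$ is a Poisson (resp.\ Hamiltonian) vector field if and only if $\hat D$ is; this already settles the $\mathbbm{d}u$-part and concentrates all the content in $\hat D$.

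Second, since the flow of $D\in\Gamma(\Hom(TM,A))$ inside $\mathcal{J}\mathcal{G}$ stays in the group bundle $\Hom(TM,A)^{0}$, which projects to units of $\mathcal{G}$, the action $L^{*}$ preserves every cotangent fibre $T^*_{g}\mathcal{G}$; hence $\hat D$ is the fibrewise-linear \emph{vertical} vector field on $T^*\mathcal{G}$ associated, under $\End(T\mathcal{G})\cong\End(T^*\mathcal{G})$, with the bundle endomorphism $\Phi:=-R\circ D\circ dl$ of $T\mathcal{G}$. Exactly as in the proof of Proposition~\ref{symp1} I would then test $\hat D$ against the Poisson bracket on $T^*\mathcal{G}$ on the generating functions $l_{Y}$ ($Y\in\mathfrak{X}(\mathcal{G})$; it suffices to use $l$-projectable $Y$) and $p^{*}h$ ($h\in C^{\infty}(\mathcal{G})$), using $\{l_{Y},l_{Z}\}=l_{[Y,Z]}$, $\{l_{Y},p^{*}h\}=p^{*}(Yh)$, $\{p^{*}h,p^{*}h'\}=0$, $\hat D(l_{Y})=l_{\Phi Y}$, $\hat D(p^{*}h)=0$ (up to the usual signs). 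The derivation defect of $\hat D$ then comes out as $l_{\Phi[Y,Z]-[\Phi Y,Z]-[Y,\Phi Z]}$ on the pair $(l_{Y},l_{Z})$ and as $\pm p^{*}((\Phi Y)h)$ on the pair $(l_{Y},p^{*}h)$, so $\hat D$ is a Poisson vector field, i.e.\ $[\hat D,\pi]=0$, if and only if $\Phi$ is a derivation of the Lie bracket of vector fields on $\mathcal{G}$ --- which is precisely the displayed identity $RD[dl(X),dl(Y)]=[RDdl(X),Y]+[X,RDdl(Y)]$, the exact analogue of the condition $D\circ\rho\in\mathrm{Der}(A)$ in Proposition~\ref{symp1}.

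For the Hamiltonian statement: if $D=0$ then the fundamental vector field is $\hat u=X_{l_{\overrightarrow u}}$ and we are done. Conversely, a Hamiltonian vector field is Poisson, so by the previous step $\Phi=-R\circ D\circ dl$ is a derivation of $[\cdot,\cdot]_{T\mathcal{G}}$; but the anchor of the Lie algebroid $T\mathcal{G}$ is the identity --- concretely this is the content of the mixed defect $p^{*}((\Phi Y)h)$ having to vanish for all $Y,h$ --- so $\Phi=0$, and because $R$ is a fibrewise isomorphism onto $\ker dr$ and $dl$ is fibrewise surjective, this forces $D=0$; then the fundamental vector field is $\hat u$, with Hamiltonian function $l_{\overrightarrow u}$. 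Incidentally this shows that, in contrast with $A^{*}$, here ``Poisson'' and ``Hamiltonian'' coincide for fundamental vector fields, the isotropy of $T\mathcal{G}$ being trivial.

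The step I expect to cost the most care is the clean intrinsic identification of $\hat u$ with the cotangent lift $X_{l_{\overrightarrow u}}$, including the sign, and the bookkeeping in the derivation-defect computation --- in particular the justification that testing on $l$-projectable vector fields suffices, since their differentials, together with those of the functions $p^{*}h$, already span $T^{*}(T^*\mathcal{G})$. Everything else is a direct transcription of the proof of Proposition~\ref{symp1}.
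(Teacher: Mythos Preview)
The paper states this Proposition without proof, so there is no argument to compare against; your proposal is filling in what the authors left to the reader. Your approach is the natural one and is correct: it is precisely the proof of Proposition~\ref{symp1} transported from the Lie algebroid $A$ to the tangent Lie algebroid $T\mathcal{G}$, using the preceding Lemma for the explicit form of the fundamental vector fields. The identifications $\hat u=X_{l_{\overrightarrow u}}$ (cotangent lift) and $\hat D=$ vertical linear vector field of $\Phi=-R\circ D\circ dl\in\End(T\mathcal{G})$ are exactly right, and testing on $l$-projectable vector fields together with pulled-back base functions does suffice.

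One point deserves a sentence of justification that you glossed over. You compute two defects for the Poisson condition --- the bracket defect $l_{\Phi[Y,Z]-[\Phi Y,Z]-[Y,\Phi Z]}$ and the mixed defect $p^{*}((\Phi Y)h)$ --- and then assert that their simultaneous vanishing is equivalent to $\Phi$ being a derivation of $[\cdot,\cdot]_{T\mathcal{G}}$. Strictly, the mixed defect vanishing says $\Phi=0$, which is \emph{a priori} stronger; but applying the bracket-derivation identity to $[Y,hZ]$ (using that $\Phi$ is $C^{\infty}(\mathcal{G})$-linear) gives $(\Phi Y)(h)\,Z=0$ for all $Y,Z,h$, so the bracket-derivation condition already forces the mixed defect to vanish. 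Once you say this, your concluding observation is exactly right: since the anchor of $T\mathcal{G}$ is the identity, the displayed derivation condition is in fact equivalent to $\Phi=0$ and hence to $D=0$, so that for this action ``Poisson'' and ``Hamiltonian'' coincide --- which is the content of the second clause.
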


The symplectic groupoid $T^*\mathcal{G}\rightrightarrows A^*$ left acts on itself with moment map $J=l:T^*\mathcal{G}\to A^*$. We find this moment map is equivariant with respect to the $\mathcal{J}\mathcal{G}$-actions.
\begin{Pro}
The moment map $J: T^*\mathcal{G}\to A^*$ is $\mathcal{J}\mathcal{G}$-equivariant, i.e.,$J\circ L_s^*=\Ad_s^*\circ J$ for $s\in \mathcal{J}\mathcal{G}$. Namely, the diagram
\begin{equation*}
		\begin{tikzcd}
			T^*\mathcal{G} \arrow{d}{J} \ar{r}{L_s^*} &T^*\mathcal{G} \arrow{d}{J}\\
			A^* \arrow{r}{\Ad_s^*} &A^*
		\end{tikzcd}
\end{equation*}
is commutative.
\end{Pro}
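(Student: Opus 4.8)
The plan is to verify the claimed identity pointwise. Fix $s=j_x^1 b\in\mathcal{J}\mathcal{G}$ and $\xi\in T_g^*\mathcal{G}$ with $l(g)=x$; then $L_s^*\xi\in T_{b(x)g}^*\mathcal{G}$, and since $l(b(x)g)=\phi_b(x)$ both $J(L_s^*\xi)$ and $\Ad_s^*(J\xi)$ lie in $A_{\phi_b(x)}^*$, so it suffices to show they agree after pairing with an arbitrary $v\in A_{\phi_b(x)}$. Using the target formula $\langle l(\eta),w\rangle=\langle\eta,dR_{\tilde g}(w)\rangle$ for $\eta\in T_{\tilde g}^*\mathcal{G}$, $w\in A_{l(\tilde g)}$, together with the defining formulas of $L_s^*$ and of $\Ad_s^*$, one computes $\langle J(L_s^*\xi),v\rangle=\langle\xi,\,dL_{b^{-1}}\bigl(dR_{b(x)g}(v)\bigr)\rangle$ and $\langle\Ad_s^*(J\xi),v\rangle=\langle\xi,\,dR_g\bigl(\Ad_{j_{\phi_b(x)}^1 b^{-1}}v\bigr)\rangle$. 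Hence the proposition reduces to the tangent-level identity
\[dL_{b^{-1}}\bigl(dR_{b(x)g}(v)\bigr)=dR_g\bigl(\Ad_{j_{\phi_b(x)}^1 b^{-1}}v\bigr),\qquad v\in A_{\phi_b(x)}.\]

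To prove this I would first use the factorization $R_{b(x)g}=R_g\circ R_{b(x)}$, so that $dR_{b(x)g}(v)=dR_g\bigl(dR_{b(x)}(v)\bigr)$, and then observe that the left translation $L_{b^{-1}}$ by the bisection $b^{-1}$ commutes with the right translation $R_g$ by the fixed arrow $g$, since $l(kg)=l(k)$ gives $L_{b^{-1}}(kg)=b^{-1}(l(k))\,kg=\bigl(L_{b^{-1}}(k)\bigr)g$. Consequently the left-hand side equals $dR_g\bigl(dL_{b^{-1}}(dR_{b(x)}(v))\bigr)$, and the whole matter collapses to
\[dL_{b^{-1}}\circ dR_{b(x)}\big|_{A_{\phi_b(x)}}=\Ad_{j_{\phi_b(x)}^1 b^{-1}}.\]
For this, note $b^{-1}(\phi_b(x))=b(x)^{-1}$, so on the source fibre $r^{-1}(\phi_b(x))$, where $r$ is constantly $\phi_b(x)$, the conjugation map satisfies $\mathrm{AD}_{b^{-1}}(k)=b^{-1}(l(k))\,k\,b^{-1}(r(k))^{-1}=b^{-1}(l(k))\,k\,b(x)=L_{b^{-1}}\bigl(R_{b(x)}(k)\bigr)$. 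Since $\Ad_{j_{\phi_b(x)}^1 b^{-1}}$ is by construction the restriction to $A_{\phi_b(x)}=\ker dr|_{\phi_b(x)}$ of the differential of $\mathrm{AD}_{b^{-1}}$ at $1_{\phi_b(x)}$ (the content of Proposition~\ref{CSS}), and that differential equals $dL_{b^{-1}}\circ dR_{b(x)}$ on $A_{\phi_b(x)}$ by the displayed factorization, the required equality follows. As $\mathrm{AD}$ and all the translations above depend on $b$ only through its $1$-jet, this is indeed a well-defined identity for $s\in\mathcal{J}\mathcal{G}$.

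The only place requiring genuine care is the bookkeeping: keeping the source/target conventions ($r$ the source, $l$ the target) straight, and making sure every composite is applied on the fibre where it is defined — that $L_s^*\xi$ really sits over $\phi_b(x)$, that the factorization $R_{b(x)g}=R_g\circ R_{b(x)}$ and the commutation $L_{b^{-1}}\circ R_g=R_g\circ L_{b^{-1}}$ hold on a neighbourhood of $g$, and that $dL_{b^{-1}}\circ dR_{b(x)}$ takes values in $A_x\subset T_{1_x}\mathcal{G}$. With the domains pinned down, the argument is the short chain of identifications above and involves no analytic input.
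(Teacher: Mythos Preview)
Your proof is correct and follows essentially the same route as the paper's. Both reduce the equivariance to the tangent-level identity $dL_{b^{-1}}\circ dR_{b(x)g}=dR_g\circ \Ad_{j_{\phi_b(x)}^1 b^{-1}}$ on $A_{\phi_b(x)}$ and verify it via the groupoid identity $R_g\circ \mathrm{AD}_{b^{-1}}=L_{b^{-1}}\circ R_{b(x)g}$ on the source fibre $r^{-1}(\phi_b(x))$; the paper checks this identity in one line, while you factor it through the commutation $L_{b^{-1}}\circ R_g=R_g\circ L_{b^{-1}}$ and the identification $\mathrm{AD}_{b^{-1}}=L_{b^{-1}}\circ R_{b(x)}$ on $r^{-1}(\phi_b(x))$, which is the same computation unpacked.
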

\begin{proof}
Explicitly, we shall check
\begin{eqnarray}\label{eq}
J(L_{j_x^1 b}^* \xi)=\Ad_{j_x^1 b}^* J(\xi),\qquad  \xi\in T_g^*\mathcal{G}, l(g)=x.
\end{eqnarray}
Let $\phi_b(x)=y$. For $u\in A_{y}$, we have 
\[\langle \Ad_{j_x^1 b}^* J(\xi),u\rangle=\langle l(\xi),\Ad_{j_y^1 b^{-1}} u\rangle=\langle \xi, R_{g} \Ad_{j_y^1 b^{-1}} u\rangle\]
and 
\[\langle J(L_{j_x^1 b}^* \xi), u\rangle=\langle L_{j_x^1 b}^* \xi, R_{b(x)g} u\rangle=\langle \xi, L_{j_y^1 b^{-1}} R_{b(x)g} u\rangle.\]
Then the equality (\ref{eq}) holds since we have
\[R_g \mathrm{AD}_{b^{-1}} (h)=b^{-1}(l(h))h (b^{-1}(y))^{-1}g=b^{-1}(l(h))h b(x)g=(L_{ b^{-1}}\circ R_{b(x)g})(h),\]
for any $h\in r^{-1}(y)$. 
\end{proof}

\begin{Rm} It is direct to check $r(L_{j_x^1 b}^* \xi)=r(\xi)$ for $\xi\in T_g^*\mathcal{G}$. Then
Equation (\ref{eq}) explains the explicit corresponding relation between the coadjoint orbit passing $l(\xi)\in A^*$ of $\mathcal{J}\mathcal{G}$  and the groupoid orbit of $T^*\mathcal{G}$ passing $r(\xi)\in A^*$. Besides this, we can further consider the affine coadjoint action and get a relation
\[J(L_{j_x^1 b}^* \xi+\rho^*(d \phi_{b}^{-1})^*\gamma))=\Ad_{j_x^1 b}^* J(\xi)
+\rho^*(d\phi_b^{-1})^*\gamma,\qquad \gamma\in T_x^* M,\] 
of which the right hand side is in the affine coadjoint orbit of $l(\xi)\in A_{x}^*$.
\end{Rm}

We can also identify a symplectic leaf with a symplectic reduced space of the symplectic groupoid $T^*\mathcal{G}$. See \cite{MW} for the reduction theorem of a symplectic groupoid acting on a symplectic manifold.


With a Lie groupoid $\Gamma$ acting on itself from the right ($J=r$), the quotient space $J^{-1}(\alpha)/\Gamma_\alpha$ is naturally isomorphic to the orbit of $\alpha$ in the base manifold $M$ by the map $g\to l(g)$. Here $\Gamma_\alpha=l^{-1}(\alpha)\cap r^{-1}(\alpha)$ is the isotropy group at $\alpha\in M$. 


The symplectic groupoid $T^*\mathcal{G}$ symplectically acts on itself with the moment map $r: T^*\mathcal{G}\to A^*$. Then we get 
\begin{Pro}
For an $r$-connected Lie groupoid $\mathcal{G}$ with Lie algebroid $A$, the symplectic leaves of $A^*$ are exactly the reduced manifolds for the action of $T^*\mathcal{G}$ on itself.
\end{Pro}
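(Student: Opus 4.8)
The plan is to assemble the proposition from the identifications already in hand, together with the standard Marsden--Weinstein picture for symplectic groupoids. Fix $\alpha\in A^*_x$. By the reduction theorem for a symplectic groupoid acting on a symplectic manifold --- here $T^*\mathcal{G}\rightrightarrows A^*$ acting on itself with moment map $r$, see \cite{MW} --- the level set $r^{-1}(\alpha)$ is acted on by the isotropy group $(T^*\mathcal{G})_\alpha=l^{-1}(\alpha)\cap r^{-1}(\alpha)$, and the reduced manifold $r^{-1}(\alpha)/(T^*\mathcal{G})_\alpha$ inherits a canonical symplectic form. What must be shown is that this reduced manifold, with its reduced form, is the symplectic leaf of the Lie--Poisson structure on $A^*$ through $\alpha$; running over all $\alpha$ then gives the stated equality of the two families of submanifolds.

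First I would treat the underlying manifolds. By the general fact recalled just before the statement --- for a Lie groupoid acting on itself from the right with moment map the source, $r^{-1}(\alpha)/(T^*\mathcal{G})_\alpha$ is naturally diffeomorphic, via $\xi\mapsto l(\xi)$, to the groupoid orbit of $\alpha$ in the base --- the reduced manifold is precisely the $T^*\mathcal{G}$-orbit through $\alpha$. By Theorem \ref{symplectic leaf} this orbit coincides with the orbit $\mathcal{O}_\alpha$ of the affine coadjoint action of $\mathcal{J}\mathcal{G}\ltimes T^*M$, and by Theorem \ref{main2}(2) the latter is exactly the symplectic leaf through $\alpha$. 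The hypothesis that $\mathcal{G}$ be $r$-connected enters here: it makes the source fibers of $T^*\mathcal{G}$, and hence its orbits, connected, so that the orbit is the whole symplectic leaf rather than a connected component of it.

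It remains to match the symplectic structures, and this is the only step with real content. One checks that under the identification $\xi\mapsto l(\xi)$ the reduced symplectic form on $r^{-1}(\alpha)/(T^*\mathcal{G})_\alpha$ coincides with the form induced by the Lie--Poisson structure on the leaf $\mathcal{O}_\alpha$. This is an instance of the general principle that for any symplectic groupoid the symplectic leaves of the base are precisely the reduced spaces of its self-action, carrying precisely the reduced forms, and it is contained in \cite{MW}. Concretely, it is obtained by unwinding the definitions: the defining compatibility of $T^*\mathcal{G}$ makes $l$ and $r$ into Poisson and anti-Poisson submersions onto $A^*$, so a local section of $r$ over a symplectic leaf, composed with $l$, gives a local symplectomorphism onto the leaf, and this descends to the identification above. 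I expect this compatibility of the two forms to be the main --- indeed essentially the only --- obstacle; the set- and manifold-level statement is immediate from Theorems \ref{symplectic leaf} and \ref{main2}.
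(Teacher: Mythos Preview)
Your proposal is correct and follows essentially the same route as the paper. In fact the paper gives no explicit proof at all: it states the proposition as an immediate consequence (``Then we get'') of the preceding two observations --- that for any groupoid acting on itself from the right, $r^{-1}(\alpha)/\Gamma_\alpha$ is identified via $l$ with the orbit of $\alpha$, and that the relevant symplectic reduction theorem is in \cite{MW} --- together with Theorems \ref{symplectic leaf} and \ref{main2}. Your write-up simply makes this chain of identifications explicit and spells out the symplectic-form matching that the paper leaves implicit in its citation of \cite{MW}.
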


\newpage

\end{document}